\documentclass[11pt,reqno]{amsart}
\usepackage[ocgcolorlinks,unicode,bookmarks]{hyperref}
\usepackage[dvipsnames]{xcolor}
\hypersetup{colorlinks=true,citecolor=NavyBlue,linkcolor=BrickRed,urlcolor=Green}
\hypersetup{pdftitle={Rank 2 Affine Invariant Subvarieties in H(6)},
  pdfauthor={Pramana Saldin and Ruocheng Yang}}
\usepackage[nameinlink]{cleveref}
\usepackage{anysize} \marginsize{1.3in}{1.3in}{1in}{1in}
\usepackage{amsfonts}
\usepackage[figuresleft]{rotating}

\usepackage{graphicx}

\usepackage{booktabs}

\let\oldtocsection=\tocsection

\let\oldtocsubsection=\tocsubsection

\let\oldtocsubsubsection=\tocsubsubsection

\renewcommand{\tocsection}[2]{\bfseries\hspace{0em}\oldtocsection{#1}{#2}} %
\renewcommand{\tocsubsection}[2]{\hspace{1em}\oldtocsubsection{#1}{#2}}
\renewcommand{\tocsubsubsection}[2]{\itshape\hspace{2em}\oldtocsubsubsection{#1}{#2}}

\usepackage{tikz-cd}
\usepackage{float}
\usetikzlibrary{calc, positioning, decorations.markings,arrows,tqft,shapes.geometric}

\usepackage{comment}
\usepackage[all,cmtip]{xy}
\usepackage{amsmath}
\usepackage{enumitem}
\usepackage{amsthm}
\usepackage{amssymb}
\usepackage{mathrsfs}
\usepackage{enumitem}

\usepackage{tikz}
\tikzset{
  labl/.style={anchor=south, rotate=90, inner sep=.5mm}
}
\colorlet{lightgreen}{green!10!white}

\usepackage[normalem]{ulem}

\newtheorem{thm}{Theorem}[section]

\newtheorem{prop}[thm]{Proposition}
\newtheorem{lemma}[thm]{Lemma}

\newtheorem{lem}[thm]{Lemma}
\newtheorem{sublem}[thm]{Sublemma}
\newtheorem{cor}[thm]{Corollary}

\theoremstyle{definition}
\newtheorem{defn}[thm]{Definition}

\newtheorem{definition*}{Definition}
\newtheorem*{acknowledgements}{Acknowledgements}

\newtheorem{ex}[thm]{Example}
\theoremstyle{remark}

\newcounter{step}[section]

\newcommand{\bQ}{\mathbb{Q}}

\newcommand{\bC}{\mathbb{C}}

\newcommand{\cF}{\mathcal{F}}

\newcommand{\cH}{\mathcal{H}}

\newcommand{\cM}{\mathcal{M}}

\newcommand{\cQ}{\mathcal{Q}}

\newcommand{\fA}{\mathbf{A}}
\newcommand{\fB}{\mathbf{B}}
\newcommand{\fC}{\mathbf{C}}
\newcommand{\fD}{\mathbf{D}}

\newcommand{\rk}{\mathrm{rank}\,}

\def\GL{\mathrm{GL}}

\def\Col{\operatorname{Col}}

\renewcommand{\bar}[1]{\overline{#1}}

\usepackage{amscd,amssymb,amsmath}
\usepackage{color}

\usepackage{mleftright}

\title[Rank 2 in $\cH(6)$]%
{Rank $2$ Affine Invariant Subvarieties in $\cH(6)$}%

\author{Pramana Saldin}
\address{Department of Mathematics, University of Wisconsin--Madison, Madison, WI 53706}
\curraddr{Department of Mathematics, 970 Evans Hall, MC 3840, University of California, Berkeley, CA 94720-3840}
\email{saldin@berkeley.edu}

\author{Ruocheng Yang}
\address{Department of Mathematics, University of Wisconsin--Madison, Madison, WI 53706}
\curraddr{Department of Mathematics, University of Toronto, Toronto, ON M5S 2E4, Canada}
\email{ruocheng.yang@mail.utoronto.ca}

\subjclass[2020]{Primary 32G15; Secondary 37D40}
\keywords{Translation surfaces, affine invariant subvarieties, orbit closures, Teichm\"uller dynamics}

\begin{document}

\begin{abstract}
We classify rank $2$ rel $0$ arithmetic affine invariant subvarieties in the minimal stratum $\mathcal H(6)$. The proof follows and extends Apisa's approach in genus three, reducing the analysis to the classification of rank $1$ rel $1$ cylinder rigid affine invariant subvarieties in lower-genus boundary strata. In particular, together with the classification of algebraically primitive rank $2$ rel $0$ orbit closures, this gives a complete description of rank $2$ affine invariant subvarieties in $\mathcal H(6)$.
\end{abstract}

\maketitle

\section{Introduction}

The dynamics of the $\GL^+_2(\mathbb R)$--action on strata of Abelian differentials has become a central theme in Teichmüller dynamics. Let $\kappa=(\kappa_i)$ be a partition of $2g-2$. The \textit{stratum} $\cH(\kappa)$ parametrizes pairs $(X,\omega)$, where $X$ is a compact Riemann surface of genus $g$ and $\omega$ is a holomorphic $1$-form whose zero orders are prescribed by $\kappa$.

A breakthrough of Eskin--Mirzakhani--Mohammadi, together with the algebraicity theorem of Filip, shows that orbit closures are highly rigid: every $\GL_2^+(\mathbb R)$--orbit closure is an \emph{affine invariant subvariety}, namely a properly immersed algebraic subvariety which is locally cut out by real homogeneous linear equations in period coordinates \cite{eskin2015isolation, eskin2018invariant, filip2016semisimplicity, filip2016splitting}. Understanding and classifying these orbit closures is one of the guiding problems in the field. The smallest subfield over which the coefficients of these linear equations may be chosen is called the \emph{field of definition} \cite{wright2014field}. We call an affine invariant subvariety \emph{arithmetic} if its field of definition is $\mathbb Q$.

Compactifications of strata and of affine invariant subvarieties play an important role in this classification program. Mirzakhani--Wright introduced a partial compactification and developed a boundary theory for affine invariant subvarieties, providing a powerful framework for inductive arguments \cite{mirzakhani2017boundary}. Subsequent refinements include the WYSIWYG compactification of Chen--Wright \cite{chen2021wysiwyg} and the multi-scale compactification of Bainbridge--Chen--Gendron--Grushevsky--Möller \cite{BCGGM2018compactification,bainbridge2024modulispacemultiscaledifferentials}.

A fundamental structural result of Avila--Eskin--Möller shows that, for any affine invariant subvariety $\cM$ and any $(X,\omega)\in \cM$, the projection $p:T_{(X,\omega)}\cM \longrightarrow H^1(X;\mathbb C)$ has symplectic image \cite{AvilaEskinMoller2017}. The \textit{rank} of $\cM$ is defined to be half the complex dimension of $\operatorname{im}(p)$, and the \textit{rel} of $\cM$ is defined to be $\dim_{\mathbb C}\ker(p)$.

Recent work of Apisa--Wright resolved the high-rank regime, and more importantly, their analysis suggests that the entire classification of affine invariant subvarieties of rank $\ge 2$ reduces to the remaining case of rank $2$ rel $0$ \cite{apisa2023high}. In this paper, we study this problem in the minimal stratum $\cH(6)$ of genus $4$. Since any translation surface in $\cH(6)$ has a single zero, every affine invariant subvariety in $\cH(6)$ has rel zero. Our main result is the following.

\begin{thm}\label{thm:main-rank2-h6}
Let $\cM$ be a rank $2$ rel $0$ arithmetic affine invariant subvariety in $\cH(6)$. Then $\cM$ is a full locus of branched covers.
\end{thm}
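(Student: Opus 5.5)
We follow Apisa's strategy for rank $2$ in genus three and reduce to the boundary. Let $\cM\subset\cH(6)$ be rank $2$, rel $0$ and arithmetic, so $\dim_{\bC}\cM=4$.

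\textbf{Step 1: degenerate to a rank $1$ boundary component.} By the cylinder deformation theorem and arithmeticity, a horizontally periodic surface in $\cM$ has at least two non-parallel $\cM$-equivalence classes of horizontal cylinders; since the field of definition is $\bQ$, the twist space can be used freely. Collapse one equivalence class $\mathcal C$ via the Mirzakhani--Wright boundary theory, for instance by a cylinder degeneration that pinches the core curves of $\mathcal C$ or shrinks the cylinders to zero height. The resulting boundary component $\cM_{\mathcal C}$ is an affine invariant subvariety of dimension $3$ in a lower-genus (possibly disconnected or multi-component) boundary stratum. We arrange that it has rank $1$ rel $1$, as in Apisa's argument, by choosing $\mathcal C$ so that the collapse drops rank but not the total dimension bookkeeping. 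Because cylinder rigidity passes to the boundary from the arithmetic rank $2$ rel $0$ setting, $\cM_{\mathcal C}$ is \emph{cylinder rigid}. The boundary strata that can occur are then enumerated combinatorially from the possible cylinder diagrams in $\cH(6)$: genus $3$ or lower, with marked points or pairs of zeros coming from the pinched curves.

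\textbf{Step 2: classify the boundary.} Classify rank $1$ rel $1$ cylinder rigid affine invariant subvarieties in these low-genus boundary strata. We expect each to be a locus of covers of a rank $1$ rel $0$ locus (a Teichmüller curve) with one free marked point or relative period. Arithmeticity forces these to be square-tiled Teichmüller curves, hence torus covers, with the rel direction coming from a free periodic point or a twin pair. This step uses the known classifications of periodic points, results of Eskin--Filip--Wright and Apisa--Wright, and genus $\le 3$ rank $1$ results.

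\textbf{Step 3: lift back to $\cM$.} Use the classification of boundary components to show that $\cM$ is itself a full locus of branched covers. This amounts to reconstructing the covering map from the boundary cover together with the gluing data of the collapsed cylinders. We would compare two different degenerations (horizontal and vertical classes) to pin down a map $f\colon X\to Y$ to a genus $2$ surface or a torus locus of dimension $4$, and then use dimension counting ($\dim\cM=4$) to conclude that $\cM$ equals the full locus. The candidate bases are $\cH(2)$, $\cH(1,1)$, and tori with marked points, which must be compatible with a single zero of order $6$ upstairs.

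\textbf{Main obstacle.} We expect Step 2 to be the hardest: the case analysis over all boundary strata arising from $\cH(6)$, especially ruling out exotic cylinder rigid rank $1$ rel $1$ loci in genus $3$ boundary strata. We would first try to show that every such locus is a locus of covers by finding a second cylinder direction and applying the marked-point classification. If that fails, we would fall back on enumerating square-tiled surfaces of small degree.
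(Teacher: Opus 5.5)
Your outline has the right general shape (degenerate to a rank $1$ rel $1$ cylinder rigid boundary, classify it, lift back), but it has two genuine gaps.

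\textbf{First gap: nothing in your plan controls which boundary strata occur, and this is the crux.} Collapsing an arbitrary equivalence class can leave a genus $3$ boundary; for example, a single free cylinder contributes only one vanishing cycle. Your Step~2 then requires classifying rank $1$ rel $1$ cylinder rigid loci in genus $3$ boundary strata. No such classification is available. Your fallback of enumerating small square-tiled surfaces gives no a priori degree bound, so it is not a finite problem.

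The paper avoids this entirely. Using the Prototype Lemma (a horizontally and vertically periodic surface with $\fC_1\subseteq\fD_1$, $\fD_2\subseteq\fC_2$), it splits into two cases.
\begin{itemize}
\item \emph{Every cylinder outside $\fC_1$ is free.} Then $\fC_2$ is a single cylinder containing a nested free cylinder. The Nested Free Cylinder Theorem of Apisa--Wright then gives a quadratic double directly, with no boundary analysis at all.
\item \emph{Otherwise.} There are two disjoint non-free classes, each with at least two cylinders. Core curves of disjoint cylinders on a surface in $\cH(6)$ are linearly independent (Gauss--Bonnet with a single zero) and span an isotropic subspace, so there are at most four such cylinders. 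Hence $|\fC_1|=|\fD_2|=2$. The shearing lemma puts cross curves in both cylinders, so the vanishing cycle space has dimension at least $2$ and the boundary has genus at most $2$. Genus-one boundaries and $\cH(2)$, $\cH(2,0)$, $\cH(2,0^2)$ are then excluded by explicit marked-point and rel arguments. Only $\cH(1,1)$ and $\cH(1,1,0)$ remain, and there the rank $1.5$ configurations are classified by hand via overcollapsing, with a golden-ratio configuration discarded by arithmeticity.
\end{itemize}
Incidentally, your expected description of the boundary via a free marked point is ruled out by cylinder rigidity in $\cH(1,1,0)$.

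\textbf{Second gap: your Step~3 targets the wrong kind of base.} A locus of torus covers has rank $1$, so it cannot be rank $2$. A translation cover of degree $d\ge 2$ of a genus two surface cannot have a single zero of order $6$:
\begin{itemize}
\item over $\cH(1,1)$, the two zeros downstairs already give at least two zeros upstairs;
\item over $\cH(2)$, a single zero upstairs forces total ramification over the double zero, giving order $3d-1\neq 6$.
\end{itemize}
So none of your candidate bases $\cH(2)$, $\cH(1,1)$, or tori can occur. The actual conclusion is the quadratic double $\widetilde{\cQ}(5,-1)$, a cover of a half-translation surface.

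In the paper, the $\cH(1,1)$ and $\cH(1,1,0)$ configurations are applied to both $\Col_{\fD_2}(X,\omega)$ and $\Col_{\fC_1}(X,\omega)$. This forces both pairs $\fC_1$ and $\fD_2$ to be simple, of equal height and of equal circumference. That determines the prototype up to the standard twists and exhibits an involution whose quotient lies in $\cQ(5,-1)$. Equality $\cM=\widetilde{\cQ}(5,-1)$ then follows from $\dim_{\bC}\cM=\dim_{\bC}\widetilde{\cQ}(5,-1)=4$. A reconstruction that only looks for maps to translation surfaces will never find this.
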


Here a \emph{full locus of branched covers} is defined as in \cite[Section 1.2]{ApisaWright2024}.

The stratum $\cH(6)$ is the minimal stratum in genus $4$ and is a natural next case after the extensively studied classification results in genera $2$ and $3$. The proof of Theorem~\ref{thm:main-rank2-h6} follows the strategy developed by Apisa in genus $3$: we analyze carefully chosen cylinder degenerations and reduce the key boundary analysis to rank $1$ rel $1$ affine invariant subvarieties in lower-genus strata \cite{apisa2024shortproofclassificationhigher}. Related work includes Apisa--Aulicino's classification of algebraically primitive orbit closures with quadratic field of definition \cite{apisa2024algebraicallyprimitiveinvariantsubvarieties}, as well as Apisa's classification of higher-rank orbit closures in hyperelliptic components \cite{Apisa2015hyp}.

Combining Theorem~\ref{thm:main-rank2-h6} with the work of Apisa--Aulicino gives the classification of rank $2$ affine invariant subvarieties in $\cH(6)$. Indeed, by Wright's field-of-definition bound \cite[Theorem~1.5]{wright2014field}, a rank $2$ affine invariant subvariety in genus $4$ has field of definition of degree at most $2$. The degree one case is the arithmetic case treated here, while the degree two case is algebraically primitive and is covered by the classification of Apisa--Aulicino.

The low-genus classification theory provides the background for this work. In genus $2$, Calta and McMullen independently discovered infinite families of rank one affine invariant subvarieties \cite{calta2004veech,mcmullen2003billiards}. Building on these ideas, McMullen subsequently classified all orbit closures in genus $2$ \cite{mcmullen2005,McM06b,mcmullen2007dynamics}.

In genus $3$, rank $2$ affine invariant subvarieties have been classified in the works of Aulicino, Nguyen, and Wright \cite{nguyen2014non,aulicino2016classification,Aulicino2015RankTA,AulicinoNguyen2020rank2}. Ygouf \cite{Ygouf2023} classified rank one rel one affine invariant subvarieties in genus $3$ strata with at most two zeros. In parallel, Apisa \cite{Apil9} gave a classification of rank one orbit closures in the hyperelliptic components $\cH^{\mathrm{hyp}}(g-1,g-1)$. Mirzakhani--Wright \cite{mirzakhani2018full} obtained a classification of full rank affine invariant subvarieties, and Winsor \cite{10.1093/imrn/rnad144} classified full rel orbit closures.

Taken together, these results provide a substantial low-genus classification framework. Theorem~\ref{thm:main-rank2-h6} advances this program to the minimal stratum $\cH(6)$, a natural first case in genus $4$.

\begin{acknowledgements}
The authors are grateful to Paul Apisa for numerous instructive conversations, for generously sharing his insights throughout the preparation of this paper, and for carefully reading earlier drafts and providing many helpful comments. They also thank David Aulicino, Sam Freedman, Kai Fu, and Alex Wright for helpful discussions. The authors gratefully acknowledge support from the NSF through grant DMS-2304840, which supported the summer research program organized by Paul Apisa that led to this paper.
\end{acknowledgements}

\section{Rank \texorpdfstring{$1.5$}{} Loci in Genus Two}

\subsection{Preliminaries}

We begin by recalling several notions concerning cylinders on translation surfaces and cylinder rigid affine invariant subvarieties.

A \emph{cylinder} on a translation surface is an isometric embedding $\phi:\mathbb R/(c\mathbb Z)\times (0,h)\longrightarrow X$ which does not extend isometrically to a strictly larger Euclidean cylinder. The constants $c$ and $h$ are called the \emph{circumference} and \emph{height} of the cylinder, respectively. The map $\phi$ extends continuously to $\mathbb R/(c\mathbb Z)\times [0,h]$, and the images of $\mathbb R/(c\mathbb Z)\times\{0\}$ and $\mathbb R/(c\mathbb Z)\times\{h\}$ are the two \emph{boundary components} of the cylinder. Each boundary component is a union of saddle connections, and their union is the boundary of the cylinder.

Let $\cM$ be an affine invariant subvariety. Two cylinders $C_1$ and $C_2$ on a surface in $\cM$ are said to be $\cM$-\emph{equivalent} if they are parallel and remain parallel under all sufficiently small deformations in $\cM$. After rotating the surface, we will often assume that the cylinders under consideration are horizontal.

Given a cylinder equivalence class $\fC$, let $\gamma_C$ denote the core curve of a cylinder $C\in\fC$, and let $h_C$ denote its height. The \emph{standard deformation} associated to $\fC$ is
\[
\sigma_{\fC}:=\sum_{C\in\fC} h_C\gamma_C^*.
\]
By the Cylinder Deformation Theorem, the standard cylinder deformation associated to an equivalence class is tangent to $\cM$ \cite{wright2015cylinder}.

We will use the following form of cylinder rigidity. An affine invariant subvariety $\cM$ is called \emph{cylinder rigid} if each cylinder equivalence class admits a partition into \emph{subequivalence classes}. More precisely, a subequivalence class consists of exactly those cylinders whose height ratios remain locally constant under deformations in $\cM$, and whose standard deformation lies in the tangent space $T_{(X,\omega)}\cM$.

In what follows, the boundary analysis will involve rank $1$ rel $1$ cylinder rigid affine invariant subvarieties in genus at most $2$.

Let $(X,\omega)$ be a translation surface. A map
\[
f:(X,\omega)\longrightarrow (\mathbb C/\Lambda,dz),
\]
where $\Lambda\subset\mathbb C$ is a lattice, is called a \emph{torus cover} if $f$ is a branched covering of Riemann surfaces and $f^*(dz)=\omega$.

Recall that rank $1$ affine invariant subvarieties with field of definition $\mathbb{Q}$ are precisely loci of branched torus covers; see Wright's description of the field of definition \cite[Theorem~1.1]{wright2014field} and \cite[Lemma~2.11 and Remark~2.13]{lanneau2017finiteness}.

\begin{defn}
A \emph{block} in $(X,\omega)$ is a connected component of the closure of a subequivalence class of cylinders.
\end{defn}

An illustration of how preimages of cylinders and their closures assemble into blocks is given in Figure~\ref{fig:exofblocks}. For the full loci of covers considered below, marked points on the covering surface map to marked points on the base. However, not every preimage of a marked point on the base is required to be marked on the covering surface. We emphasize that the notion of a block applies to all rank $1$ rel $1$ cylinder rigid affine invariant subvarieties.

\begin{figure}[htbp]
  \centering
  \includegraphics[width=0.5\linewidth]{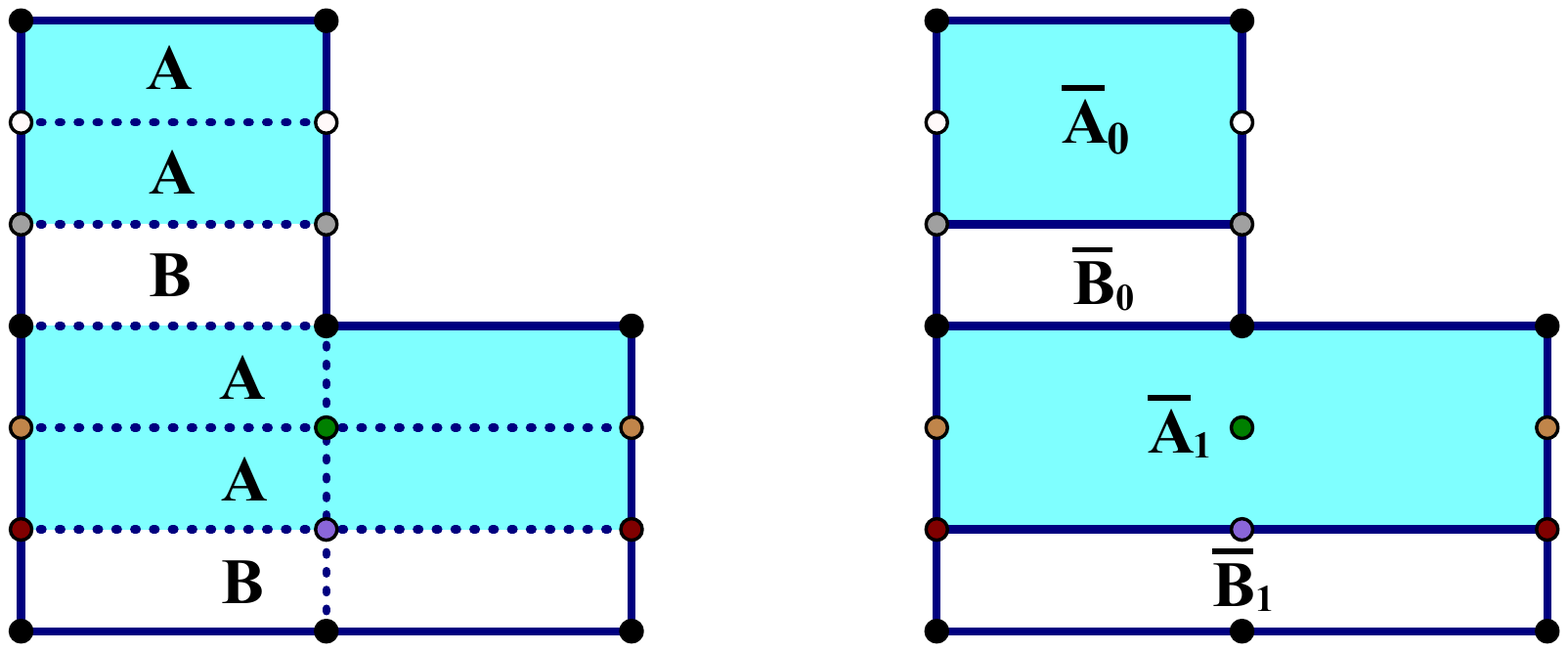}
  \caption{An example of a translation surface in $\cH(2,0^6)$. Opposite edges of the polygonal presentation are identified to obtain the surface: the black point denotes the unique zero of order~$2$, while the remaining points denote marked points. The left panel presents the surface as a $3\!:\!1$ torus cover with cylinder pattern $\mathbf{AAB}$. The right panel displays the induced subequivalence decomposition: the closures of the blue cylinders form the blocks corresponding to the class $\mathbf A$, while the closures of the white cylinders form the blocks corresponding to the class $\mathbf B$.}
  \label{fig:exofblocks}
\end{figure}

We use the terminology introduced in \cite[Section~4]{Apisa2025}. The following definition packages the properties of the boundary affine invariant subvarieties produced by the cylinder degenerations of Section~3, cf.\ \cite[Proposition~4.12]{Apisa2025}, in the form in which they will be used below.

\begin{defn}
An affine invariant subvariety $\cM$ of surfaces of genus at most $2$ is said to be \emph{rank $1$ rel $1$ cylinder rigid}, or simply \emph{rank $1.5$}, if, for every $(X,\omega)\in\cM$, the following conditions hold:
\begin{enumerate}
\item every direction on $(X,\omega)$ containing a cylinder is
completely covered by cylinders;
\item $\dim_{\mathbb C}\cM>2$;
\item in any cylinder direction, the cylinders can be partitioned
into at most two subequivalence classes, and within each
subequivalence class the ratios of cylinder heights remain constant
on all nearby surfaces in $\cM$.
\end{enumerate}
\end{defn}

Throughout this paper, a \emph{rank $1.5$ affine invariant subvariety} means a rank $1$ rel $1$ cylinder rigid affine invariant subvariety.

We make use of the notion of \emph{overcollapsing a subequivalence class}, introduced by Apisa \cite[Lemma~4.4]{apisa2021billiardsrighttrianglesorbit}. This technique plays a central role in the inductive analysis of affine invariant subvarieties. For instance, it was applied in \cite[Lemma~2]{apisa2024shortproofclassificationhigher} to classify invariant subvarieties in genus three by analyzing the rank~$1.5$ loci that arise in the genus one boundary.

Since every rank $1$ rel $1$ affine invariant subvariety with field of definition $\mathbb Q$ in $\cH(2,0^k)$ or $\cH(1,1,0^l)$ is a torus cover locus, the arithmetic rank $1.5$ loci that arise below contain dense sets of torus cover representatives. Thus, for the combinatorial arguments, it is convenient to work with rectilinear models, in which the relevant boundary saddle connections are drawn horizontally or vertically. Examples of such models are shown in Figure~\ref{F:experfectlshape}.

\begin{figure}[htbp]
\centering
\includegraphics[scale=0.3]{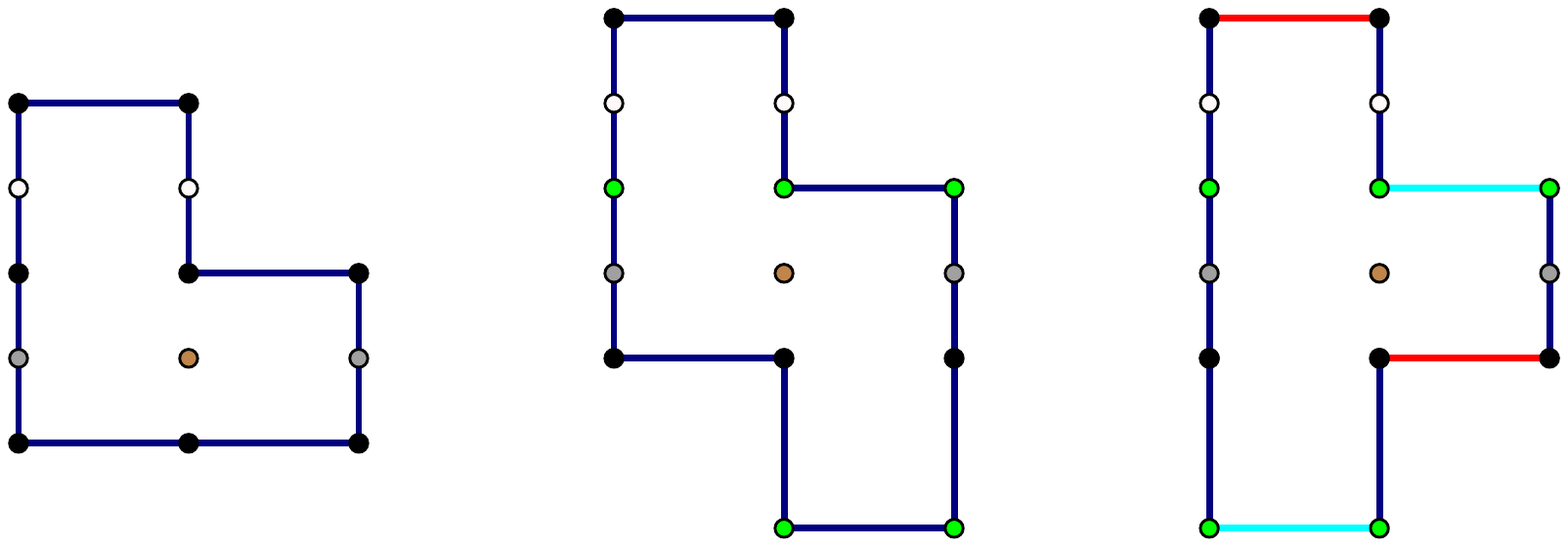}
\caption{Rectilinear representatives of the cylinder configurations used below, shown in the strata $\cH(2,0^3)$ and $\cH(1,1,0^3)$. In the last configuration, saddle connections of the same color are identified.}
\label{F:experfectlshape}
\end{figure}

The arguments below use only the induced cylinder decompositions, the subequivalence classes, and the incidence relations among the resulting blocks. Thus the rectilinear models should be viewed as convenient representatives of the relevant combinatorial configurations, rather than as additional assumptions on the orbit closure. The existence of such models in the genus two setting follows from McMullen's polygonal descriptions \cite[Section~3]{mcmullen2005}; see also \cite[Section~4.2]{Freedman2024}.

By \cite[Proposition~4.12]{Apisa2025}, the boundary components arising from the cylinder degenerations used below are rank $1.5$ affine invariant subvarieties. In this section, we analyze the low-marked cases needed in the proof of Theorem~\ref{thm:main-rank2-h6}, namely
\[
\cH(2,0),\qquad \cH(2,0^2),\qquad \cH(1,1),\qquad \cH(1,1,0).
\]
The cases $\cH(2,0^k)$ for $3\le k\le 7$ are treated in Appendix~\ref{app:rank1rel1}.

Throughout this section, we denote by $\fA$ a subequivalence class of horizontal cylinders, written as $\fA=\{A_0,\ldots,A_m\}$, ordered from top to bottom. Similarly, we denote by $\fB$ a second subequivalence class, written as $\fB=\{B_0,\ldots,B_n\}$, also ordered from top to bottom. %

\subsection{The Strata \texorpdfstring{$\cH(2,0)$}{} and
\texorpdfstring{$\cH(2,0^2)$}{}}
\label{subsec:H20-H200}
Let $(X,\omega)$ be one of the rectilinear surfaces in $\cH(2,0^k)$ considered below, and let $C_1,\ldots,C_{n_1+n_2}$ be its horizontal cylinders. In these models, the horizontal cylinders have exactly two distinct circumferences, which we denote by $c_1<c_2$. After relabeling, we assume that $C_1,\ldots,C_{n_1}$ have circumference $c_1$, and $C_{n_1+1},\ldots,C_{n_1+n_2}$ have circumference $c_2$. We refer to the former as \emph{short cylinders} and to the latter as \emph{long cylinders}.

We record a basic consequence of the existence of a rel deformation in rank $1.5$ loci in $\cH(2,0^k)$. Let $\cM\subseteq \cH(2,0^k)$ be a rank $1.5$ affine invariant subvariety containing a horizontally periodic surface, and suppose that the horizontal cylinders are partitioned into subequivalence classes $\fA$ and $\fB$. Then there exists a nontrivial rel deformation in $\cM$ whose infinitesimal effect is to rescale the heights of all cylinders in $\fA$ by a common proportional factor $d_1$, and the heights of all cylinders in $\fB$ by a possibly distinct proportional factor $d_2$.

\begin{lem}\label{lem_1or2marked}
There are no rank $1.5$ affine invariant subvarieties in the strata $\cH(2,0)$ and $\cH(2,0^2)$.
\end{lem}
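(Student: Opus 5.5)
We argue by contradiction. Suppose $\cM$ is a rank $1.5$ affine invariant subvariety in $\cH(2,0)$ or $\cH(2,0^2)$. First, the dimension constraint leaves very little room. Forgetting the marked points projects $\cM$ to a $\GL_2^+(\mathbb R)$-invariant locus in $\cH(2)$. The tangent space has a rank-$1$ absolute part and, since $\dim_{\mathbb C}\cM>2$, a nontrivial rel direction. In $\cH(2,0)$ the only rel direction is moving the single marked point, so $\cM$ would have to be a locus of the form ``rank $1$ locus in $\cH(2)$ (a Teichmüller curve) with a freely moving point''. Indeed $\dim_{\mathbb C}\cM=3$ forces exactly this, and then by Wright's field-of-definition result and the arithmeticity discussion above, the base is a torus cover locus.

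In $\cH(2,0^2)$ the rel space is two-dimensional, and the rank-$1$ rel-$1$ condition means $\cM$ is cut out by one linear relation among the two marked-point positions over a Teichmüller curve. Equivalently, we have a Teichmüller curve in $\cH(2)$ together with a one-parameter family of marked point pairs. Since torus covers are dense, we may work with a horizontally periodic rectilinear representative, using the models of Figure~\ref{F:experfectlshape}.

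The key step is to use condition (3) together with the rel deformation recorded just before the lemma. In a horizontally periodic direction, the cylinders are partitioned into at most two subequivalence classes $\fA,\fB$, and there is a nontrivial rel deformation rescaling the heights in $\fA$ by $d_1$ and in $\fB$ by $d_2$ with $d_1\neq d_2$. Such a deformation changes the absolute periods unless the weighted height changes cancel on every absolute cycle. Since it is rel, $\sum_{C\in\fA}h_C\gamma_C$ and $\sum_{C\in\fB}h_C\gamma_C$ must be proportional in homology. The marked points only cut cylinders of the underlying $\cH(2)$ surface into pieces of equal circumference. So the core curves of the short cylinders are all homologous (to one class), and likewise for the long cylinders, and these two classes are independent in $H_1$. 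This independence forces each of $\fA$ and $\fB$ to contain cylinders of both circumferences in matching proportions.

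I would then do a case analysis on how one or two marked points can subdivide the cylinders of a one- or two-cylinder decomposition in $\cH(2)$ (a one-cylinder or an L-shaped two-cylinder surface). Since every cylinder direction is completely periodic, it suffices to exhibit a single direction that violates the structure. With at most two horizontal marked-point saddle connections, one gets at most four horizontal cylinders. In each configuration we check that no partition into $\fA,\fB$ satisfies both requirements: each class carries proportional short and long homology, and each class consists of whole unions with locally constant height ratios (so that the rel deformation is tangent and nonzero). For instance, a single marked point splits only one cylinder into two. A class containing only one of the two pieces would carry part of one homology class and none of the other, which contradicts proportionality.

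The main obstacle I expect is the one-cylinder direction (or directions where all cylinders are homologous), since there the proportionality constraint is vacuous. For those I would pass to a different periodic direction on the same torus cover, chosen so that the saddle connection through the marked points is non-horizontal and the decomposition has two circumferences. Alternatively, I would use the fact that on a Teichmüller curve in $\cH(2)$ the marked points moving in a rel-$1$ family must be periodic points or free points: by Möller's result the only periodic points in $\cH(2)$ are Weierstrass points, so the rel deformation must move a point freely. Such a free point yields a direction in which a cylinder through it is cut into pieces that cannot be grouped consistently. This gives the required contradiction.
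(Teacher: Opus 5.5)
Your $\cH(2,0)$ argument works. Proportionality of $\sum_{\fA}h_C\gamma_C$ and $\sum_{\fB}h_C\gamma_C$ (the relation of Lemma~\ref{lem:H2-sum-heights-equations}) forces each class to meet both the short and the long region, which needs four cylinders, while one marked point gives at most three. This is a homological repackaging of the paper's observation that a cylinder with no marked point in its closure has rel-invariant height.

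The gap is in $\cH(2,0^2)$. Your claim that every horizontal configuration fails one of your two tests is false for the mixed case. Let $p$ lie inside the short cylinder, cutting it into $s_1,s_2$, and let $q$ lie inside the long cylinder, cutting it into $l_1,l_2$. Take $\fA=\{s_1,l_1\}$ and $\fB=\{s_2,l_2\}$ with $h(s_1)/h(l_1)=h(s_2)/h(l_2)$. Then $h(s_1)\gamma_s+h(l_1)\gamma_l$ is a positive multiple of $h(s_2)\gamma_s+h(l_2)\gamma_l$. Moreover, moving $p$ and $q$ vertically at speeds in the ratio $h(s_1):h(l_1)$ rescales $\fA$ and $\fB$ by uniform (opposite-sign) factors. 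So both of your requirements hold.

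If ``whole unions'' was meant to forbid splitting the two pieces of one underlying cylinder between classes, that criterion is false. A vertically moving marked point changes the heights of its two pieces with opposite signs, so the pieces must lie in different classes.

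This is a two-cylinder direction, so your ``one-cylinder obstacle'' discussion does not reach it. The M\"oller fallback does not apply either. M\"oller's description of periodic points concerns non-arithmetic Veech surfaces, but you are working on torus covers, where preimages of torsion points are dense periodic points. Also, in rel $1$ the two points can move in a coupled one-parameter family with neither point free, which is exactly the configuration above.

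What is missing is a second direction together with the rel deformation. The real part of the rel deformation preserves the horizontal decomposition and slides $q$ along its horizontal loop with nonzero speed, since $q$ must move in the configuration above. Use it to bring $q$ into the strip lying under the short cylinder. In the vertical direction of the L-shaped model, the vertical cylinder over the uncovered part of the long cylinder then has no marked point in its closure. It is therefore alone in its homology class and its height is rel-invariant. Your proportionality argument, or the paper's nonzero-rescaling argument, then gives the contradiction; this is the paper's proof.
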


\begin{proof}
Suppose first that $\cM\subset \cH(2,0)$ is rank $1.5$. Since there is only one marked point, in any horizontal cylinder decomposition one can find a single long or short cylinder whose closure contains no marked point. Its boundary consists only of saddle connections joining the unique zero to itself, so its height is unchanged under any small rel deformation moving the marked point relative to the zero. This contradicts the preceding consequence of the cylinder rigidity, namely that the nontrivial rel direction rescales the heights in each subequivalence class by a nonzero common proportional factor.

Now suppose that $\cM\subset \cH(2,0^2)$ is rank $1.5$. The two marked points cannot both lie entirely in the short cylinder region, nor can they both lie entirely in the long cylinder region. Indeed, in either case the opposite region contains a single long or short cylinder whose closure has no marked point, and the same argument gives a contradiction.

Thus, after possibly relabeling, one marked point lies in the short cylinder region and the other lies in the long cylinder region. By a small rel deformation, we may move the marked point on the long cylinder so that it lies directly below the other marked point. At the resulting surface, there is a single vertical short cylinder whose closure contains no marked point; see Figure~\ref{F:2 marked points example}. Its height is fixed under the corresponding rel deformation, again contradicting the nonzero height rescaling forced by rank $1.5$. Therefore no such rank $1.5$ affine invariant subvariety exists in $\cH(2,0^2)$.
\end{proof}

\begin{figure}[htbp]
\centering
\includegraphics[scale=0.12]{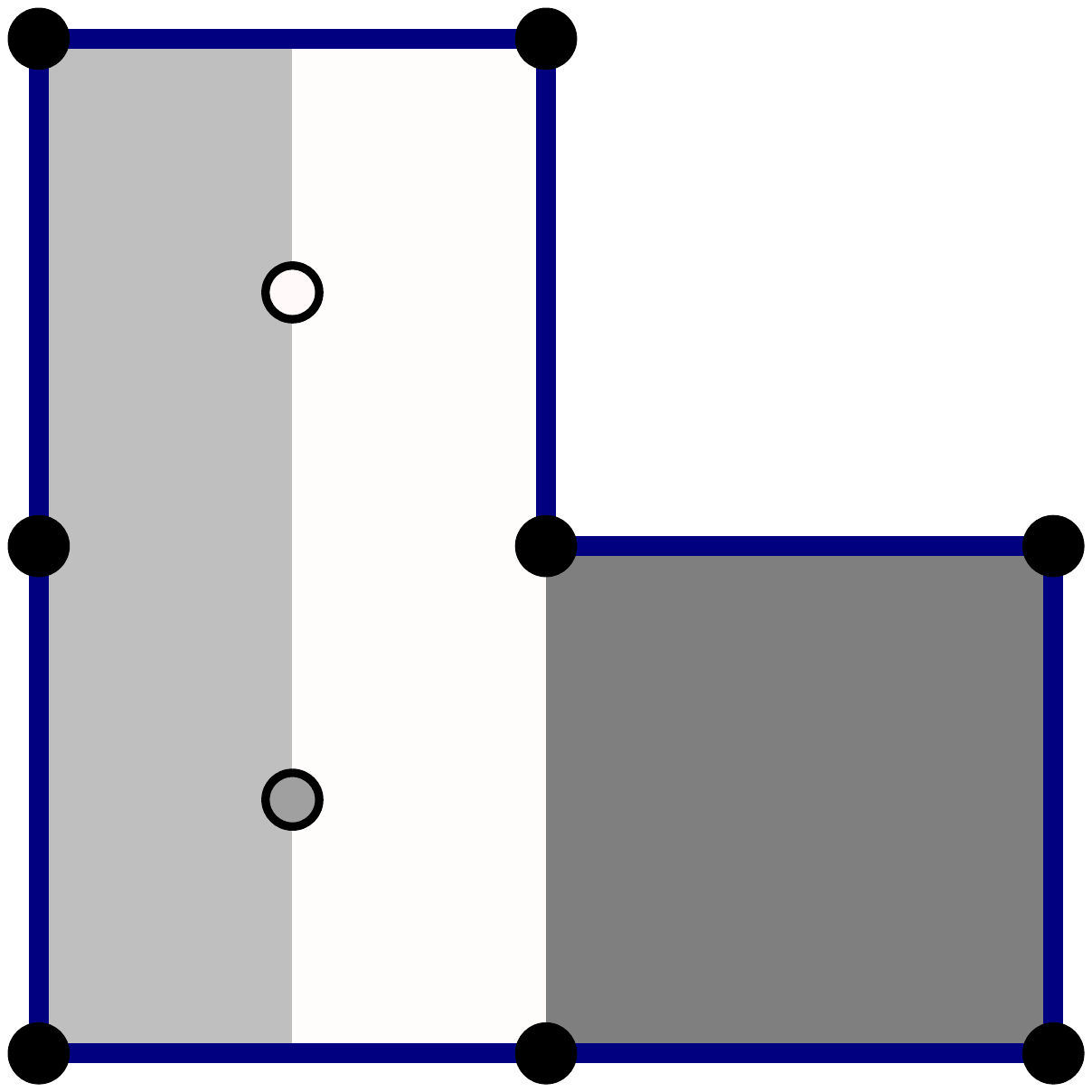}
\caption{The obstruction to rank $1.5$ loci in $\cH(2,0^2)$.}
\label{F:2 marked points example}
\end{figure}

\subsection{The Strata \texorpdfstring{$\cH(1,1)$}{} and
\texorpdfstring{$\cH(1,1,0)$}{}}
\label{subsec:H11-H110}

Recall that overcollapsing a subequivalence class $\fA$ means collapsing the cylinders in $\fA$ to height zero along the corresponding cylinder deformation direction, and then continuing slightly past the collapse in the same direction. See Apisa \cite[Lemma~4.4]{apisa2021billiardsrighttrianglesorbit} for the precise definition.

By \cite[Theorem~7.1]{mcmullen2007dynamics}, every surface in $\cH(1,1)$ admits a slit torus construction; see also \cite[Proposition~1.16]{wright2015translation}. Moreover, if $\cM\subset \cH(1,1)$ is a rank $1.5$ affine invariant subvariety, then by \cite[Example 3.5]{Aulicino2015}, $\cM$ contains a horizontally periodic surface with three horizontal cylinders. The enumeration of cylinder diagrams in $\cH(1,1)$ shows that there is only one three-cylinder diagram in this stratum; see \cite[Section~7.1]{zorich2006flatsurfaces}. Thus the last two drawings in Figure~\ref{F:experfectlshape} should be viewed as rectilinear representatives of this same cylinder diagram.

\begin{lem}\label{lem:H11-rank15-pattern}
Let $\cM\subset \cH(1,1)$ be a rank $1.5$ affine invariant subvariety. Let $(X,\omega)\in\cM$ be a horizontally periodic surface with three horizontal cylinders, labelled $C_1,C_2,C_3$ from top to bottom. Then, up to reversing the order of the cylinders and interchanging the two subequivalence classes, the subequivalence pattern is $\fA,\ \fB,\ \fA$. Moreover, the two $\fA$-cylinders have the same height.
\end{lem}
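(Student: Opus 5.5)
The plan is to work directly with the unique three-cylinder diagram in $\cH(1,1)$ and the rel deformation forced by rank $1.5$. In this diagram (cf.\ \cite[Section~7.1]{zorich2006flatsurfaces}), the middle cylinder $C_2$ is the long one. Its top boundary consists of the bottom of $C_1$ together with a saddle connection $s$ that joins the two distinct zeros. Its bottom boundary consists of the top of $C_3$ together with a second saddle connection $s'$ joining the two zeros. The cylinders $C_1$ and $C_3$ are short; in the slit torus picture they are glued to $C_2$ across the slit, so each has one boundary component glued to $C_2$ and the other glued to itself or to the other short cylinder. First, I will record the height relations in homology. Because the heights are periods of relative classes, a pure rel deformation $v$ (zero on absolute homology) changes the height of each cylinder by a controlled multiple of $v(\sigma)$, where $\sigma$ is the relative class joining the two zeros. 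Concretely, a rel deformation moves one zero vertically relative to the other. This changes the height of $C_2$ by $+t$ or $-t$. Since the total area of the absolute class crossing $C_1$, $C_2$ (resp.\ $C_2$, $C_3$) is fixed, it changes $h_1$ and $h_3$ by $\mp t$.

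Next I rule out the other subequivalence patterns. There are at most two classes, and by condition (3) the nontrivial rel direction rescales the heights of each class by a common factor. Hence the rel vector $(\dot h_1,\dot h_2,\dot h_3)=(-t,t,-t)$ (up to sign) must satisfy $\dot h_i/h_i$ constant on each class. If all three cylinders formed one class, we would need $-t/h_1=t/h_2$, which is impossible for $t\neq 0$ since heights are positive. Thus $C_2$ can never share a class with $C_1$ or $C_3$, because their height derivatives have opposite signs. This leaves only the pattern $\fA,\fB,\fA$ with $C_2\in\fB$. One must also rule out $C_1$ and $C_3$ lying in different classes with $C_2$ alone, but that would need three classes, contradicting condition (3). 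The degenerate alternatives, where $C_2$ shares a class with $C_1$ and $C_3$ is alone, up to reversal, are excluded by the same sign argument.

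Finally, within $\fA=\{C_1,C_3\}$, the condition $\dot h_1/h_1=\dot h_3/h_3$ with $\dot h_1=\dot h_3=-t\neq 0$ forces $h_1=h_3$.

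The main obstacle is justifying the sign computation: that rel changes $h_1$ and $h_3$ by the same amount, and $h_2$ by the opposite amount. This is a careful bookkeeping of which boundary saddle connections of each cylinder join distinct zeros in the unique diagram. I would verify it on the rectilinear models of Figure~\ref{F:experfectlshape}, using the fact that the circumferences and the absolute periods of the core-crossing curves are fixed under rel.
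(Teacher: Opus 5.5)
Your argument is correct in outline, and its key input differs from the paper's. The paper uses \cite[Theorem~1.5]{mirzakhani2017boundary} to get that the absolute parts of $\sigma_{\fA}$ and $\sigma_{\fB}$ are positively proportional. Hence $\sigma_{\fA}-c\sigma_{\fB}$ is a nonzero purely relative vector in $T\cM$, so it is a multiple of the rel generator $\gamma_1^*-\gamma_2^*+\gamma_3^*$. The sign pattern then gives $\fA\fB\fA$, and matching the two positive coefficients gives $h_1=h_3$. You go the other way. Since $\cM$ has rel $1$, its rel line is the whole rel line of $\cH(1,1)$; you compute its effect $(-t,t,-t)$ on the heights and impose condition~(3) in the form ``$\dot h_i/h_i$ is constant on each class.'' This needs neither the Mirzakhani--Wright proportionality nor the tangency of subequivalence-class deformations. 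It also disposes of the one-class case explicitly, whereas the paper's argument presupposes two classes. Both proofs rest on the same fact about the rel generator.

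You must correct your description of the diagram, because as written it does not support your key step. In the unique three-cylinder diagram of $\cH(1,1)$, no horizontal saddle connection joins the two zeros, and neither short cylinder is glued to itself or to the other. The top of $C_2$ is made of the bottoms of $C_1$ and $C_3$, and the bottom of $C_2$ is made of their tops. All four horizontal saddle connections are loops, with one zero $P$ on the top of $C_2$ and the other zero $Q$ on its bottom.

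Your picture would break the computation in two ways. If a horizontal saddle connection joined $P$ and $Q$, a vertical rel deformation would tilt it and destroy the horizontal decomposition, so there would be no heights to differentiate. And gluing $C_1$ to $C_3$ yields the $\cH(2,0)$ diagram, where $\gamma_1$ and $\gamma_3$ are homologous and no absolute cycle crosses only $C_1$ and $C_2$.

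With the correct picture your bookkeeping is immediate. The height $h_2$ is the imaginary part of a period from $Q$ to $P$ across $C_2$, while $h_1$ and $h_3$ are imaginary parts of periods from $P$ to $Q$. So moving $P$ up by $t$ changes the heights by $(-t,t,-t)$. This is consistent with the fixed absolute heights $h_1+h_2$ and $h_2+h_3$, which are what you meant by ``total area.''
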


\begin{proof}
Let $C_1,C_2,C_3$ be the horizontal cylinders, ordered from top to bottom, let $\gamma_i$ denote the core curve of $C_i$, and let $h_i$ denote the height of $C_i$. In this diagram, the space of rel deformations is one-dimensional and is spanned by $\gamma_1^*-\gamma_2^*+\gamma_3^*$. By \cite[Theorem~1.5]{mirzakhani2017boundary}, the absolute projections of the standard deformations $\sigma_{\fA}$ and $\sigma_{\fB}$ are positively proportional. Hence, for some $c>0$, $\sigma_{\fA}-c\sigma_{\fB}$ is purely relative. Therefore it is a scalar multiple of $\gamma_1^*-\gamma_2^*+\gamma_3^*$.

Write
\[
\sigma_{\fA}=\sum_{C_i\in\fA} h_i\gamma_i^*,
\qquad
\sigma_{\fB}=\sum_{C_i\in\fB} h_i\gamma_i^* .
\]
The coefficients of $\sigma_{\fA}-c\sigma_{\fB}$ are positive on the cylinders in $\fA$ and negative on the cylinders in $\fB$. Since $\gamma_1^*-\gamma_2^*+\gamma_3^*$ has sign pattern $+,-,+$, the cylinders $C_1$ and $C_3$ lie in one subequivalence class and $C_2$ lies in the other. Thus, after interchanging the labels if necessary, the pattern is $\fA\fB\fA$.

With this labeling,
\[
\sigma_{\fA}-c\sigma_{\fB}
=
h_1\gamma_1^*-c h_2\gamma_2^*+h_3\gamma_3^* .
\]
Comparing the coefficients of $\gamma_1^*$ and $\gamma_3^*$ with those of a scalar multiple of $\gamma_1^*-\gamma_2^*+\gamma_3^*$ gives $h_1=h_3$. Therefore the two $\fA$-cylinders have the same height.
\end{proof}

\begin{figure}[htbp]
\centering
\includegraphics[scale=0.15]{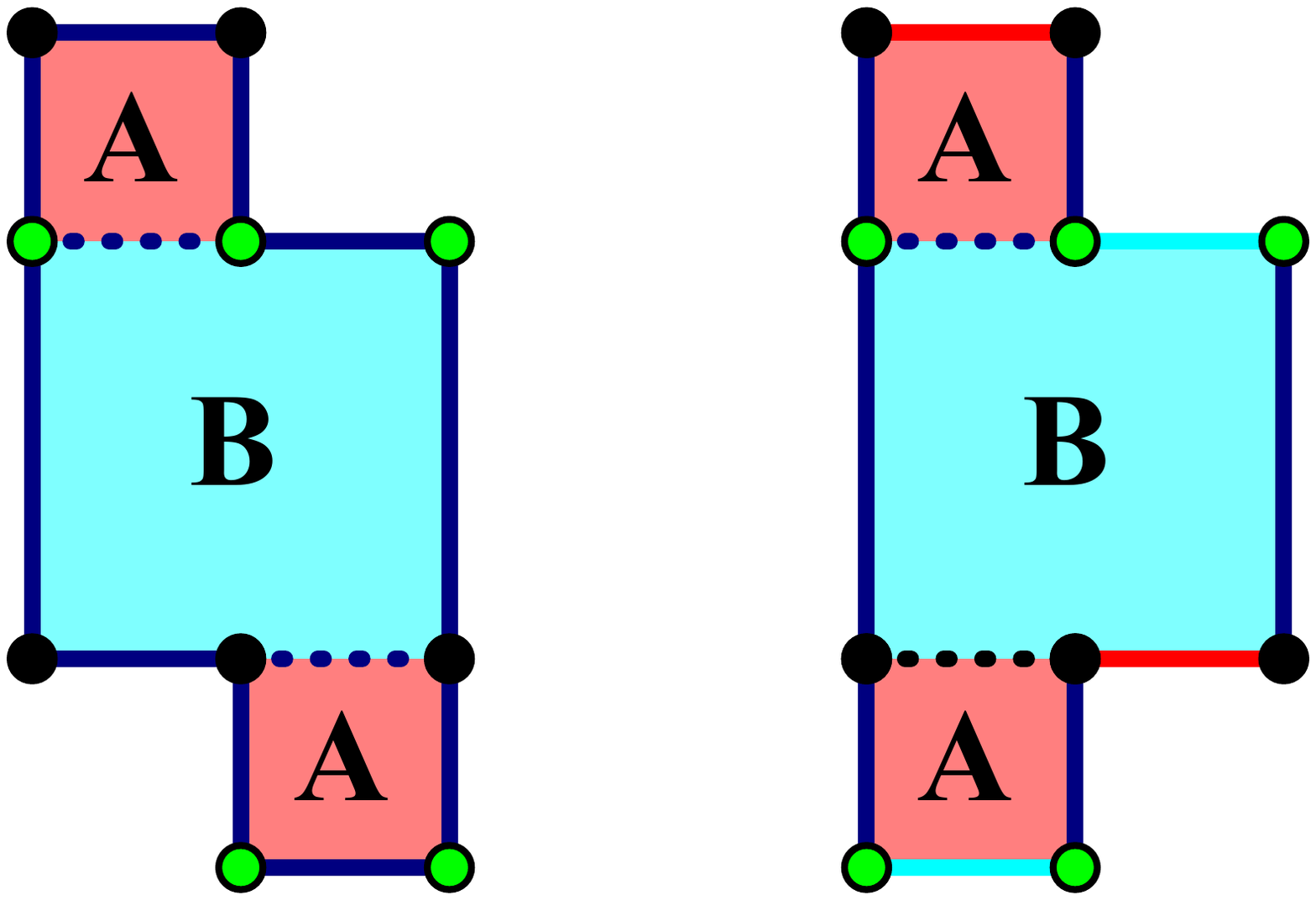}
\caption{The $\fA\fB\fA$ configuration in the standard three-cylinder diagram in $\cH(1,1)$. The two drawings differ by a standard shear of the middle cylinder and represent the same configuration for our purposes.}
\label{F:rank1.5inH(1,1)}
\end{figure}

Let $\pi:\cH(1,1,0)\to \cH(1,1)$ be the forgetful map. We first show that the rel direction of $\cM$ is not supported only at the marked point.

\begin{lem}\label{lem:marked-point-not-free}
Let $\cM\subset\cH(1,1,0)$ be a rank $1.5$ affine invariant subvariety. Then the marked point is not locally free over the underlying unmarked surface.
\end{lem}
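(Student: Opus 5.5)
We argue by contradiction. Suppose the marked point is locally free over the underlying unmarked surface, so that $\cM=\pi^{-1}(\cM')$ locally, where $\cM'=\pi(\cM)\subset\cH(1,1)$. Then $\dim_{\mathbb C}\cM=\dim_{\mathbb C}\cM'+1$, and the rel direction of $\cM$ can be taken to be the marked-point motion. The first step is to identify $\cM'$. It has rank $1$, since rank is unchanged by forgetting a marked point. Its rel is $0$: the rel direction of $\cM$ is spent on the free marked point, and $\cM$ has rel $1$, so a genuine rel deformation of $\cM'$ would give $\cM$ rel at least $2$. Hence $\cM'$ is a rank $1$ rel $0$ locus in $\cH(1,1)$, that is, a Teichm\"uller curve. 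In particular it has complex dimension $2$, and every cylinder direction on a surface of $\cM'$ is parallel-periodic with all cylinders $\cM'$-equivalent and with fixed modulus ratios.

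The second step is to choose a horizontally periodic surface $(X,\omega)\in\cM$ and look at the subequivalence classes. By condition (3) of the definition of rank $1.5$, the horizontal cylinders split into at most two subequivalence classes $\fA,\fB$. The rel deformation must rescale the heights in $\fA$ and in $\fB$ by different proportional factors $d_1\neq d_2$, as recorded before Lemma~\ref{lem_1or2marked}. However, moving a free marked point only changes heights of cylinders that are cut by the horizontal leaf through the marked point. This leaf splits a single cylinder $C$ of the underlying surface into two cylinders $C',C''$ whose heights change by $+t$ and $-t$. All other cylinders have boundaries consisting of saddle connections between the zeros, so their heights are unchanged. For the infinitesimal effect to be a proportional rescaling within each subequivalence class, $C'$ and $C''$ must lie in different classes, one of which is $\{C'\}$ up to cylinders with zero rescaling. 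Every remaining cylinder then lies in a class whose factor must vanish, which would force it into a class with $C'$ or $C''$ having nonzero factor. That is a contradiction as soon as there is a cylinder besides $C',C''$, unless all other cylinders share a factor with them.

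The third step is to extract the contradiction. We take a direction in which the underlying surface in $\cM'$ has at least two cylinders; for example, Veech surfaces in $\cH(1,1)$ have such directions, since one-cylinder directions cannot be the only periodic directions. We also place the marked point in the interior of a cylinder $D$ not adjacent to the chosen cylinder, so that the leaf through it cuts $D$. Then some cylinder $E\neq D',D''$ has height unchanged by the rel deformation but lies in the same $\cM$-equivalence class as $D$, because all cylinders in the direction are $\cM'$-equivalent. Subequivalence requires $E$ to share its rescaling factor, which is $0$, with a class containing $D'$ or $D''$. Since those factors are $\pm t/h\neq 0$, this is impossible.

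Alternatively, the contradiction can be stated through dimension: the classes $\fA=\{D'\}$ and $\fB$ containing $D''$ and $E$ would violate the constancy of height ratios within $\fB$ along the rel flow.

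The main obstacle is rigorously justifying the second step: that freeness of the marked point forces $\cM'$ to have rel $0$, and that no additional subequivalence bookkeeping rescues the configuration. I would first handle the bookkeeping via the Mirzakhani--Wright positivity used in Lemma~\ref{lem:H11-rank15-pattern}. That positivity says the relative part of $\sigma_{\fA}-c\sigma_{\fB}$ must be a multiple of $\gamma_{D'}^*-\gamma_{D''}^*$. This forces $\sigma_{\fA}$ and $\sigma_{\fB}$ to be supported essentially on $D',D''$ alone, contradicting the fact that the equivalence class contains $E$.
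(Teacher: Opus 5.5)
Your proposal is correct and is essentially the paper's argument. Freeness lets you push the marked point transversally inside one cylinder of a horizontal direction that has at least two cylinders, so the two halves and a further cylinder change height at rates $+t$, $-t$, $0$; no partition into at most two subequivalence classes, with proportional height changes within each class, can accommodate three such rates. Your Step~1 (showing $\pi(\cM)$ is a Teichm\"uller curve), the $\cM$-equivalence of $E$, the non-adjacency requirement, and the Mirzakhani--Wright fallback are all unnecessary, since condition~(3) already bounds the number of subequivalence classes in the whole direction. Like the paper, you assert rather than prove that the underlying surface has a periodic direction with at least two cylinders.
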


\begin{proof}
Suppose otherwise. Then $\ker(d\pi)\cap T_{(X,\omega)}\cM$ contains a nonzero relative tangent vector supported at the marked point. Since $T_{(X,\omega)}\cM$ is complex linear, multiplying this vector by $i$ gives another tangent vector in $T_{(X,\omega)}\cM$. Thus we may move the marked point transversely to a chosen horizontal cylinder decomposition while leaving the underlying unmarked surface fixed.

By the standard classification of cylinder diagrams in $\cH(1,1)$, choose a horizontally periodic representative of the underlying surface with two horizontal cylinders. If the marked point lies on the common boundary of the two cylinders, we first move it slightly off the boundary using the transverse rel direction. Thus we may assume that the marked point lies in the interior of one horizontal cylinder. On the marked surface, this cylinder is split into two horizontal cylinders, while the other horizontal cylinder is unchanged. Moving the marked point further in the same rel direction changes the two split heights non-proportionally and leaves the remaining cylinder height fixed. This height-change pattern is incompatible with cylinder rigidity. Indeed, within a subequivalence class, height ratios must remain locally constant, so the infinitesimal height changes of cylinders in the same subequivalence class are proportional. The two split cylinders and the unchanged cylinder therefore cannot be distributed among only two subequivalence classes. This contradicts the rank $1.5$ condition.
\end{proof}

It follows that the rel direction of $\cM$ is not supported only at the marked point. Since $\cM$ has rel one, its rel tangent line has nonzero image under the derivative of the forgetful map $\pi:\cH(1,1,0)\longrightarrow\cH(1,1)$. Thus $\pi(\cM)$ has nontrivial rel. By the preceding discussion, $\pi(\cM)$ contains a horizontally periodic surface with the standard three-cylinder diagram. We may therefore work with this diagram and place the marked point in it.

\begin{lem}\label{lem:main-rank1.5}
Up to the hyperelliptic involution and interchanging the two subequivalence classes, the only arithmetic rank $1.5$ configurations in $\cH(1,1,0)$ are those shown in Figure~\ref{F:rank1rel1inH(1,1,0)}.
\end{lem}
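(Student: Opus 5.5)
The plan is to reduce the classification to a finite combinatorial check on a single horizontally periodic surface, and then to use the rank $1.5$ conditions to cut down the possible positions of the marked point.

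\textbf{Step 1 (normalization).} By Lemma~\ref{lem:marked-point-not-free} and the discussion following it, $\pi(\cM)$ has nontrivial rel. Hence $\pi(\cM)$ contains a surface with the standard three-cylinder diagram $C_1,C_2,C_3$. Since $\cM$ is arithmetic, torus covers are dense, so we may take this surface to be rectilinear and horizontally periodic in $\cM$. Lemma~\ref{lem:H11-rank15-pattern} applied to $\pi(\cM)$ gives the pattern $\fA\fB\fA$ with $h_1=h_3$. Let $v$ span the rel line of $\cM$. Its image $d\pi(v)$ is a nonzero multiple of $\gamma_1^*-\gamma_2^*+\gamma_3^*$ on the underlying surface. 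The marked point then moves along with this deformation, possibly with an additional displacement determined by $\cM$.

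\textbf{Step 2 (case split on the location of the marked point $p$).} There are two cases.

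If $p$ lies in the interior of some $C_i$, then $C_i$ splits into two cylinders on the marked surface. Their heights must change proportionally under $v$ if they are in the same subequivalence class. If they are not, the resulting three or four classes must still fit into at most two subequivalence classes. Writing the height derivatives as $\dot h=(\dot h_1,\dot h_2,\dot h_3)$, together with the split of $\dot h_i$ into the two pieces, the sign pattern argument from the proof of Lemma~\ref{lem:H11-rank15-pattern} applies. That argument says $\sigma_{\fA}-c\sigma_{\fB}$ is purely relative, by \cite[Theorem~1.5]{mirzakhani2017boundary}. Using this, I would show that $p$ must sit at a specific height, namely the midline of $C_2$ or a height in $C_1$ or $C_3$ that makes the two new pieces match in height with the pattern. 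Otherwise some cylinder with no marked point on its boundary has a height that is rigid relative to its class, and we get a contradiction as in Lemma~\ref{lem_1or2marked}.

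If $p$ lies on a boundary saddle connection, the cylinder diagram is unchanged, but $p$ must move with the rel deformation. In that case I would determine which boundary points are compatible, essentially the Weierstrass-type points or points fixed by the relevant symmetry.

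\textbf{Step 3 (checking other directions).} Having restricted $p$ horizontally, we then use condition (1) and condition (3) in other cylinder directions, especially the vertical one on the rectilinear model. This mirrors the vertical-cylinder trick in the proof of Lemma~\ref{lem_1or2marked}. Slide $p$ by the rel direction until it aligns with a zero or with a cylinder boundary. If some vertical cylinder's closure then avoids $p$ and both zeros' motion, its height is fixed, which contradicts nonzero rescaling. This pins down the horizontal coordinate of $p$ as well. Afterwards we quotient by the hyperelliptic involution, which swaps symmetric positions, and by the swap $\fA\leftrightarrow\fB$. The remaining list should be exactly the configurations in Figure~\ref{F:rank1rel1inH(1,1,0)}. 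For each surviving configuration, we also check that it is realized, for instance by a torus cover locus where $p$ is a preimage of a marked point on the base.

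I expect the main obstacle to be Step 3. One has to exhaust all marked-point positions in the second direction without missing cases, and also ensure that the chosen alternate direction stays completely periodic. I would first try the vertical direction of the rectilinear model with commensurable widths, which arithmeticity allows. If that direction is not decisive, I would fall back on the diagonal direction through the two zeros.
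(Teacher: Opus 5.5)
There is a genuine gap: you never use \emph{overcollapsing} a subequivalence class, which is the engine of the paper's proof, and what you use instead is too weak.

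Your only quantitative input is that $\sigma_{\fA}-c\sigma_{\fB}$ is purely relative. Once $p$ splits some $C_i$ into two pieces, the purely relative part of the span of the four dual core curves is two-dimensional. It also contains the difference of the duals of the two pieces. So the condition says only that the coefficient on $C_1$, the coefficient on $C_3$, and minus the coefficient on $C_2$ agree, where the split cylinder's coefficient is the sum over its pieces. This single linear relation admits positive solutions in cases the lemma must exclude or pin down:
\begin{itemize}
\item With $C_2$ split, the labelling $\fA\fA\fB\fA$ only needs $h_{A_0}=h_{A_2}$ and $c\,h_{B_0}=h_{A_0}+h_{A_1}$.
\item For $\fA\fB\fB\fA$ it gives $h_{A_0}=h_{A_1}$, but nothing about $h_{B_0}/h_{B_1}$, i.e.\ about the height of $p$.
\item When $p$ splits $C_3$ into $B_1$ above $A_1$ (labelling $A_0,B_0,B_1,A_1$), it gives only $h_{A_0}=c\,h_{B_0}=h_{A_1}-c\,h_{B_1}$. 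This has rational solutions such as $h_{A_0}=h_{B_0}=h_{B_1}=1$, $h_{A_1}=2$.
\end{itemize}
The last case is decisive. It is realized by the golden eigenform locus with a golden point marked, which is a genuine nonarithmetic rank $1.5$ locus in $\cH(1,1,0)$. So no argument built only from conditions (1)--(3) and the rel direction can exclude it. The paper overcollapses to obtain attacking-speed equations, which here reduce to $h_{A_1}^2-h_{A_1}-1=0$, and only then invokes arithmeticity. The same mechanism (a lone cylinder can attack only its neighbours) kills the $3{+}1$ labellings and $\fA\fA\fB\fB$, and yields $h_{B_0}=h_{B_1}$ and $h_{A_0}=2h_{A_1}=2h_{A_2}$ in the surviving cases. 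In your proposal, arithmeticity is used only to choose a rectilinear model, and no step ever forces an irrational ratio.

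Two individual steps also fail as stated.
\begin{itemize}
\item \textbf{Step~1.} Lemma~\ref{lem:H11-rank15-pattern} requires a rank $1.5$ locus, but cylinder rigidity need not descend to $\pi(\cM)$: the two pieces of the split cylinder may lie in different subequivalence classes of $\cM$. In the golden configuration just described, $h(C_1)=h_{A_0}<h_{A_1}<h_{A_1}+h_{B_1}=h(C_3)$, so ``$h_1=h_3$'' is false there. Were Step~1 valid, it would exclude the golden locus without any use of arithmeticity.
\item \textbf{Steps~2 and~3.} The rigid-height contradiction ``as in Lemma~\ref{lem_1or2marked}'' does not transfer. In $\cH(2,0^k)$ the rel direction only moves marked points. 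Here, by Lemma~\ref{lem:marked-point-not-free}, it is not supported at $p$, and $d\pi(v)\propto\gamma_1^*-\gamma_2^*+\gamma_3^*$ changes the height of every unsplit horizontal cylinder. So a cylinder whose closure misses $p$ is not rigid, and you cannot slide $p$ along the rel direction independently of the zeros.
\end{itemize}
The paper instead fixes the position of $p$, and rules out its third placement, by a different observation. Any other placement makes the inherited rel deformation increase one height, decrease two, and fix one. These cannot be distributed among two subequivalence classes.
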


\begin{proof}
We now consider surfaces in $\cH(1,1,0)$. By the discussion above, after forgetting the marked point we may work with the standard three-cylinder diagram in $\cH(1,1)$. Up to the hyperelliptic involution, the marked point can be placed in the three configurations shown in Figure~\ref{F:rank1rel1inH(1,1,0)blank}. We study the resulting horizontal cylinder decompositions, and we assume that the top cylinder belongs to the subequivalence class $\fA$. In each case, cylinders in the same subequivalence class are indexed from top to bottom; thus $A_0,A_1,\ldots$ denote the $\fA$-cylinders in vertical order, and similarly for $B_0,B_1,\ldots$.

\begin{figure}[htbp]
\centering
\includegraphics[scale=0.25]{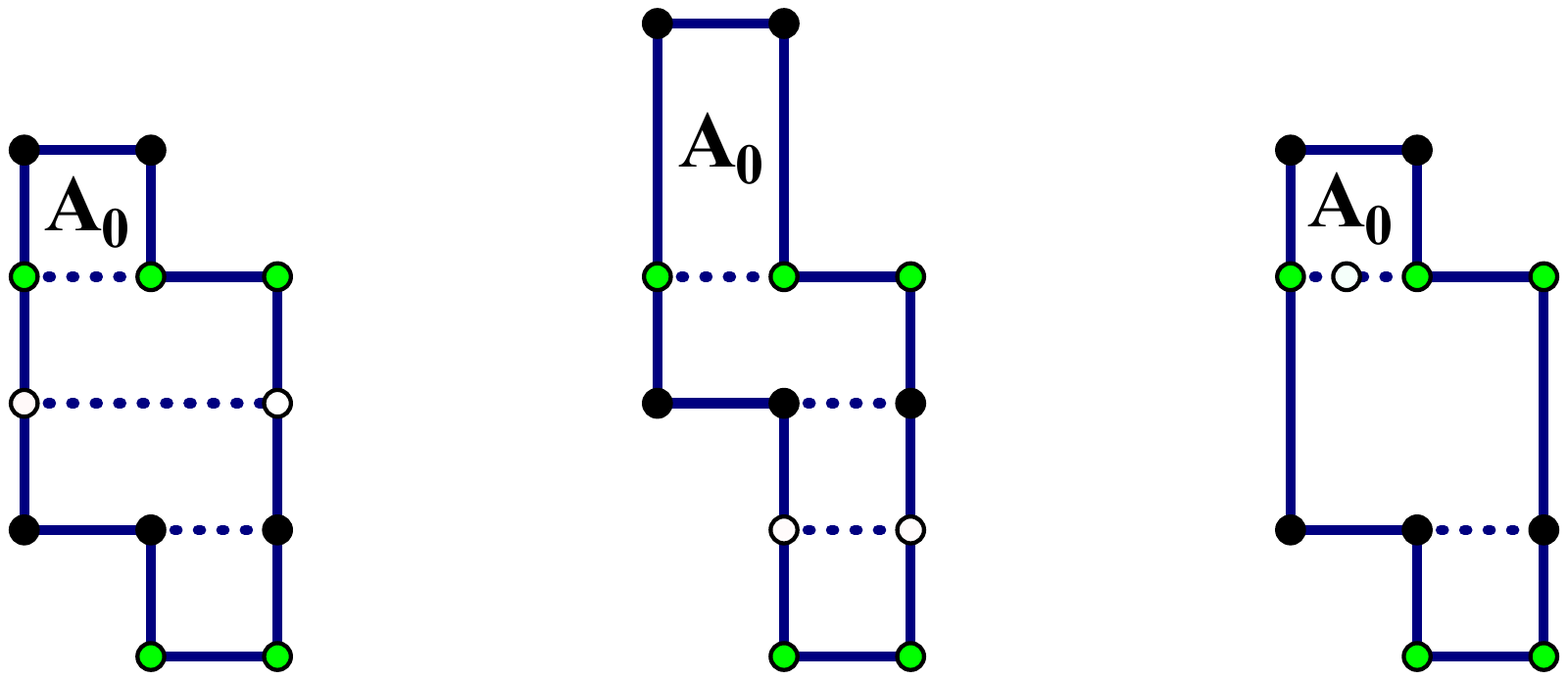}
\caption{The three configurations to consider for rank $1.5$ loci in
$\cH(1,1,0)$. The black and green points denote the two distinct simple
zeros, and the white point denotes the marked point.}
\label{F:rank1rel1inH(1,1,0)blank}
\end{figure}

Introducing the marked point divides one of the cylinders into two, so the marked surface has four horizontal cylinders. In the first configuration, the arrangements
\[
\mathbf{AAAB},\quad \mathbf{ABBB},\quad \mathbf{AABA},\quad
\mathbf{ABAA}
\]
cannot occur. Indeed, in each of these arrangements one subequivalence class consists of three cylinders and the other consists of a single cylinder. Overcollapsing the single cylinder can attack only the cylinders adjacent to it, and hence can change the heights of at most two cylinders in the three-cylinder class. Therefore at least one cylinder in the three-cylinder class has unchanged height, while another one has nonzero height change. This contradicts cylinder rigidity, since height ratios within a subequivalence class must remain locally constant. This is the same overcollapsing obstruction as in \cite[Lemma~2]{apisa2024shortproofclassificationhigher}.

The arrangements $\mathbf{AABB}$ and $\mathbf{ABAB}$ are equivalent under the hyperelliptic involution, and are also impossible. Indeed, as shown in Figure~\ref{F:rank1rel1inH(1,1,0) AABB}, after applying the relevant cylinder deformation and then overcollapsing, the cylinder $A_1$ changes height while $A_0$ does not. Hence the ratio of their heights cannot remain locally constant, contradicting cylinder rigidity.

\begin{figure}[htbp]
\centering
\includegraphics[scale=0.1]{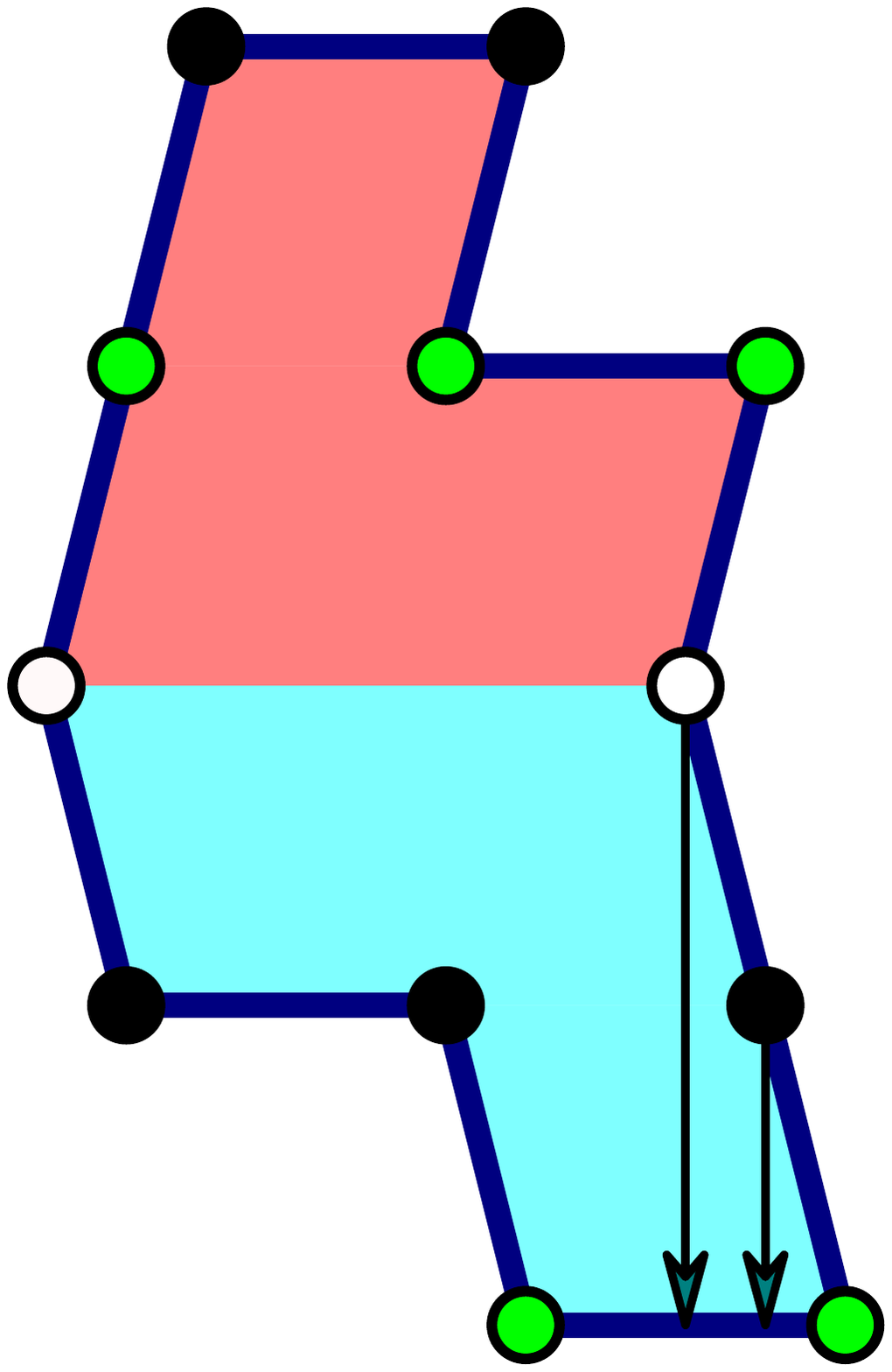}
\caption{The configuration $\mathbf{AABB}$ is impossible: after a suitable rel deformation, the height of $A_0$ remains unchanged under the subsequent small overcollapsing, while the other $\fA$-cylinder changes height.}
\label{F:rank1rel1inH(1,1,0) AABB}
\end{figure}

Therefore, in the first configuration, the only remaining possibility is $\mathbf{ABBA}$. Applying the overcollapsing algorithm in this case shows that the two $\fA$-cylinders have equal heights, and likewise the two $\fB$-cylinders have equal heights. This gives the first surviving configuration.

We next consider the second configuration in Figure~\ref{F:rank1rel1inH(1,1,0)blank}. The arrangements $\mathbf{ABBB}$ and $\mathbf{AAAB}$ cannot occur; up to the hyperelliptic involution, $\mathbf{AAAB}$ is equivalent to $\mathbf{AABA}$. In each case, the single cylinder in one subequivalence class can overcollapse into only part of the cylinders in the other subequivalence class. The arrangement $\mathbf{AABB}$ is also impossible, because one of the $\fA$-cylinders is not affected by any $\fB$-cylinder.

The arrangements $\mathbf{ABBA}$ and $\mathbf{ABAB}$ lead to irrational height solutions. We indicate the computation for $\mathbf{ABBA}$; the case $\mathbf{ABAB}$ is equivalent by the hyperelliptic involution. Let $h_{A_0}$ and $h_{A_1}$ be the heights of the two $\fA$-cylinders and normalize $h_{A_0}=1$. The overcollapsing equations reduce to
\[
\frac{1+\max\{1,h_{A_1}\}}{h_{A_1}}=\frac{1}{h_{A_1}-1}.
\]
The right-hand side is positive, so necessarily $h_{A_1}>1$. Hence $\max\{1,h_{A_1}\}=h_{A_1}$, and the equation becomes $(1+h_{A_1})/h_{A_1}=1/(h_{A_1}-1)$. Equivalently,
\[
h_{A_1}^2-h_{A_1}-1=0.
\]
Thus the only positive solution is $h_{A_1}=(1+\sqrt5)/2$. Thus \(h_{A_1}/h_{A_0}=(1+\sqrt5)/2\). This is the nonarithmetic golden-ratio configuration classified in \cite[Proposition~4.1]{apisa2024algebraicallyprimitiveinvariantsubvarieties}. Since we are classifying arithmetic rank $1.5$ loci, this case is excluded.

Hence the only remaining possibility in the second configuration is $\mathbf{ABAA}$. In this case the cylinder heights satisfy $h_{A_0}=2h_{A_1}=2h_{A_2}$. This gives the second surviving configuration.

\begin{figure}[htbp]
\centering
\includegraphics[scale=0.15]{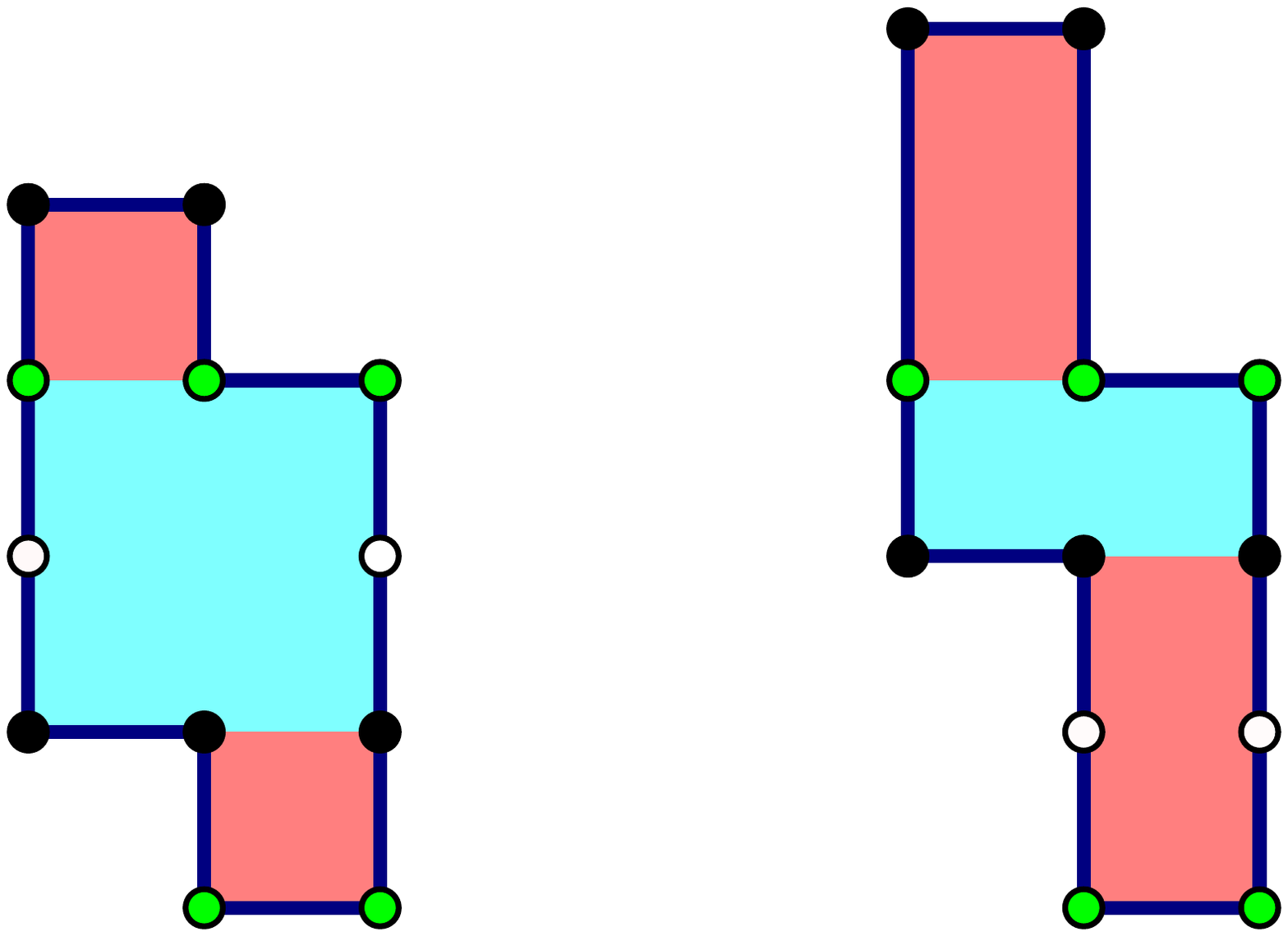}
\caption{The two arithmetic rank $1.5$ examples in $\cH(1,1,0)$, where the red cylinders lie in the subequivalence class $\fA$ and the blue cylinders lie in the subequivalence class $\fB$.}
\label{F:rank1rel1inH(1,1,0)}
\end{figure}

We now show that the marked point cannot be placed in any other position on the standard three-cylinder diagram.

\begin{sublem}\label{sublem:marked-point-position-H110}
In the first two surviving configurations, the position of the marked point is forced as in Figure~\ref{F:rank1rel1inH(1,1,0)}. In particular, the third configuration in Figure~\ref{F:rank1rel1inH(1,1,0)blank} cannot occur.
\end{sublem}

\begin{proof}
By Lemma~\ref{lem:marked-point-not-free}, the marked point is not locally free over the underlying unmarked surface. Hence any allowable rel deformation is inherited from the rank $1$ rel $1$ deformation of the underlying surface in $\cH(1,1)$.

Suppose that the marked point is placed in a position different from the two surviving configurations above. Using the subequivalence classes determined above, apply a small rel deformation as in Figure~\ref{F:marked point not move H(1,1,0)}. Under this deformation, one horizontal cylinder increases in height, two horizontal cylinders decrease in height, and the remaining horizontal cylinder has fixed height.

\begin{figure}[htbp]
\centering
\includegraphics[scale=0.15]{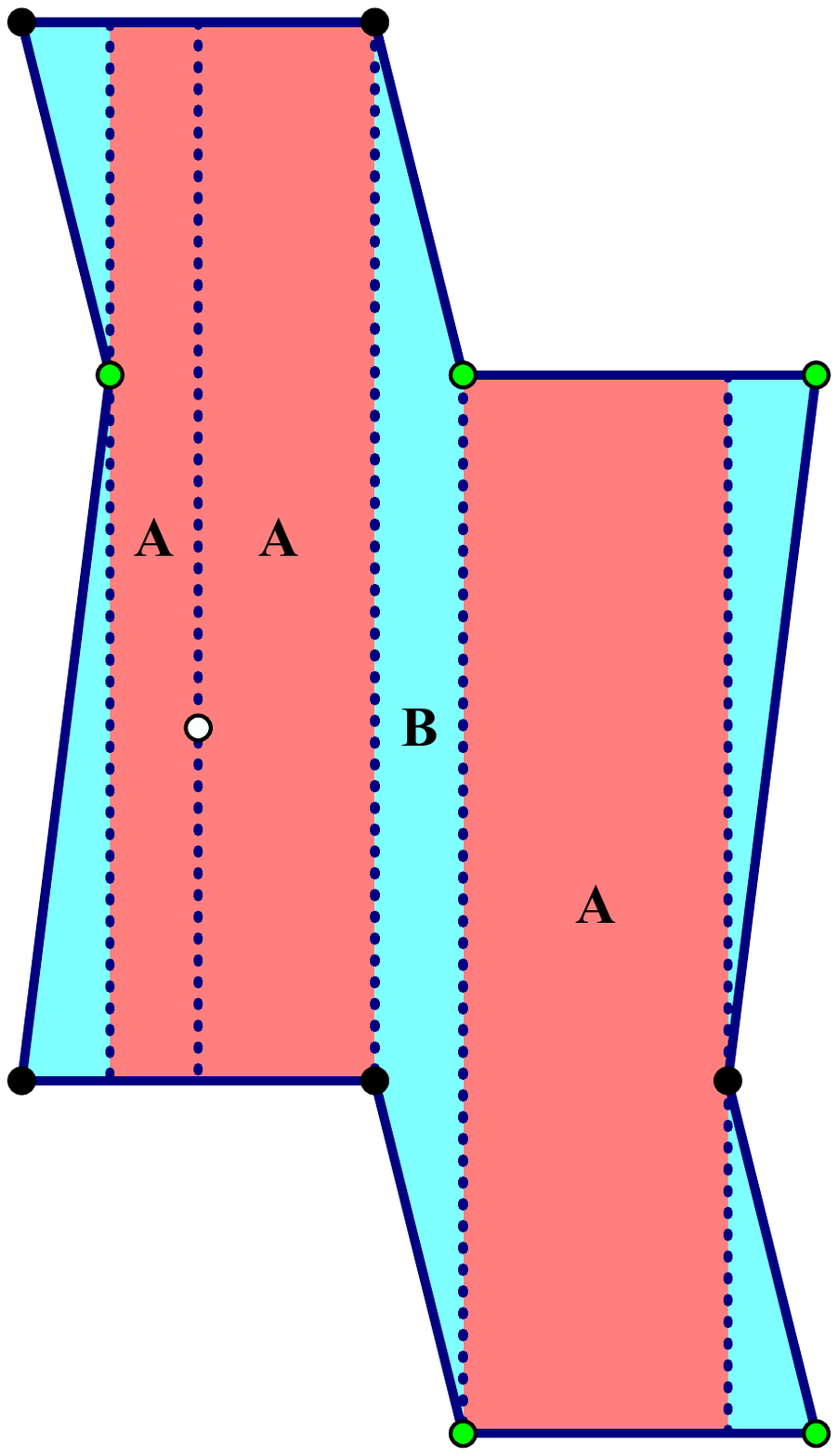}
\caption{Moving the marked point away from the positions shown in
Figure~\ref{F:rank1rel1inH(1,1,0)} produces a height-change pattern
incompatible with cylinder rigidity.}
\label{F:marked point not move H(1,1,0)}
\end{figure}

This height-change pattern is incompatible with cylinder rigidity. Indeed, within a subequivalence class, height ratios must remain locally constant, so the infinitesimal height changes of cylinders in the same subequivalence class are proportional. Thus cylinders whose heights increase, decrease, and remain fixed cannot be distributed among only two subequivalence classes. This contradicts the rank $1.5$ condition, which allows at most two subequivalence classes in any cylinder direction.

Therefore the marked point cannot be placed in any position other than the two surviving positions shown in Figure~\ref{F:rank1rel1inH(1,1,0)}. In particular, the third configuration in Figure~\ref{F:rank1rel1inH(1,1,0)blank} cannot occur.
\end{proof}

The preceding exclusions show that the only possible configurations are those shown in Figure~\ref{F:rank1rel1inH(1,1,0)}.
\end{proof}

\section{Proof of Theorem~\ref{thm:main-rank2-h6}}

\subsection{Rank 2 Invariant Subvarieties in $\cH(6)$}
By the classification of algebraically primitive affine invariant subvarieties with quadratic field of definition \cite{apisa2024algebraicallyprimitiveinvariantsubvarieties}, the only non-arithmetic rank $2$ rel $0$ affine invariant subvariety in genus $4$ is the EMMW $(1,1,1,7)$-locus in $\cH(6)$ \cite{eskin2020billiards}. Thus, in order to prove Theorem~\ref{thm:main-rank2-h6}, it remains to analyze the arithmetic case, i.e. the case where the field of definition of $\cM$ is $\mathbb Q$.

We recall the Prototype Lemma \cite[Lemma~1.9.2]{apisa2018dynamics}:

\begin{lem}\label{lem:prototype}
Let $(X,\omega)\in\cM\subset\cH(\kappa)$ where $\cM$ is a rank $2$ rel $0$ affine invariant subvariety, and let $\fC_1$ be an equivalence class of cylinders on $(X,\omega)$. Up to replacing $(X,\omega)$ by a nearby surface in $\cM$, one may assume that
\begin{enumerate}
 \item $(X,\omega)$ is horizontally periodic, with the horizontal cylinder equivalence classes denoted $\fC_1,\fC_2$;
 \item $(X,\omega)$ is vertically periodic, with the vertical cylinder equivalence classes denoted $\fD_1,\fD_2$;
 \item the inclusions $\fC_1\subseteq\fD_1$ and $\fD_2\subseteq\fC_2$ hold.
\end{enumerate}
\end{lem}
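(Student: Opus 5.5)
The plan is to combine three standard facts: the density of periodic directions, the Cylinder Deformation Theorem, and the fact that rank $2$ rel $0$ limits the number of equivalence classes in a periodic direction. I will produce the two periodic directions one after the other, using small deformations inside $\cM$ that do not destroy what has already been arranged.

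\emph{Step 1: horizontal periodicity.} After rotating, take the cylinders of $\fC_1$ to be horizontal. By the Cylinder Deformation Theorem \cite{wright2015cylinder}, the standard deformation $\sigma_{\fC_1}$, and its twist variant, is tangent to $\cM$. By the cylinder finiteness results of Mirzakhani--Wright \cite{mirzakhani2017boundary}, after a small perturbation inside $\cM$ that keeps $\fC_1$ horizontal, the horizontal direction becomes completely periodic. Since $\cM$ has rank $2$ and rel $0$, the standard deformations of distinct equivalence classes in one periodic direction span an isotropic subspace of $p(T\cM)$. This subspace has dimension at most $2$, and all of them are real multiples of $\operatorname{Re}$ or $\operatorname{Im}$-type classes. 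So there are at most two horizontal classes. There are exactly two: if there were one, $\sigma_{\fC_1}$ would be proportional to $\operatorname{Im}\omega$, and one would replace the direction via the usual argument from \cite{apisa2018dynamics}. Call them $\fC_1,\fC_2$.

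\emph{Step 2: vertical periodicity with the inclusions.} This is the step I expect to be the main obstacle. I will pick a saddle connection or cylinder in $\overline{\fC_1}$ transverse to the horizontal, say a cylinder crossing only cylinders of $\fC_1$. Such a cylinder should exist because $\fC_1$ has a proper closure and one can find a periodic direction supported in $\overline{\fC_1}$ by Smillie--Weiss type density inside that subsurface. I will then apply a shear/stretch in the span of $\sigma_{\fC_1}$ and $\sigma_{\fC_2}$, which preserves horizontality, to make this transverse cylinder vertical. After that, I perturb by the deformation $\sigma_{\fC_2}$, which moves only $\fC_2$ and so leaves the geometry inside $\overline{\fC_1}$ unchanged. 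This makes the vertical direction completely periodic, again with at most two classes $\fD_1,\fD_2$. Here $\fD_1$ is the class of the cylinders lying in $\overline{\fC_1}$.

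Finally, to get $\fC_1\subseteq\fD_1$ and $\fD_2\subseteq\fC_2$, I compare supports. Vertical cylinders entirely in $\overline{\fC_1}$ must be $\cM$-parallel to one another, since the deformation $\sigma_{\fC_2}$ does not affect them. A vertical cylinder that meets $\fC_2$ changes its holonomy under $\sigma_{\fC_2}$, so it cannot be equivalent to those. This splits the vertical cylinders into the classes $\fD_1\supseteq$ (the cylinders over $\fC_1$) and $\fD_2$, and the containments follow from the symmetric argument with the roles of horizontal and vertical swapped, using the twist of $\fD_2$. The delicate point is ensuring that the cylinders over $\fC_1$ cover all of $\fC_1$, and that those of $\fD_2$ avoid $\fC_1$. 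For this I would first try to argue by the isotropy of $\sigma_{\fC_1}$ and $\sigma_{\fD_2}$, whose symplectic pairing measures overlap area and must vanish.
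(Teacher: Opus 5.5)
\textbf{There is a genuine gap.} The paper does not prove this lemma; it quotes Apisa's Prototype Lemma \cite{apisa2018dynamics}. In Case~B it applies the lemma to a pair $(\fC_1,\fD_2)$ of disjoint, non-parallel classes, and that is where the content lies. Once you have a vertical class $\fD_2$ disjoint from the horizontal class $\fC_1$, with both directions periodic, two things follow automatically:
\begin{enumerate}
\item the rank-$2$ rel-$0$ bound together with disjointness forces exactly two classes in each direction;
\item the inclusions $\fD_2\subseteq\fC_2$ and $\fC_1\subseteq\fD_1$ hold.
\end{enumerate}
Your proposal never produces such a class. Instead, Step~2 builds vertical cylinders inside $\overline{\fC_1}$ and takes $\fD_1$ to be the class of those cylinders, with the cylinders meeting $\fC_2$ forming $\fD_2$.

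\textbf{This configuration cannot exist.} Suppose the $\fD_1$-cylinders lay in $\overline{\fC_1}$ and covered $\fC_1$. Then $\overline{\fC_1}$ would equal the closure of the $\fD_1$-cylinders. Its frontier would therefore lie both in finitely many horizontal and in finitely many vertical saddle connections, so it would be finite. But removing finitely many points from the connected surface $X$ cannot separate the interior of $\overline{\fC_1}$ from the nonempty open set $\fC_2$.

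Worse, your own (correct) observation about shearing $\fC_2$ shows that in any prototype no vertical cylinder lies in $\overline{\fC_1}$ at all. Such a cylinder would belong to $\fD_1$, since $\fD_2\subseteq\fC_2$. Shearing $\fC_2$ keeps it vertical, so every $\cM$-equivalent cylinder must also stay vertical, hence avoid $\fC_2$. That puts you back in the impossible configuration. So the cylinder you construct obstructs the conclusion rather than producing it. In a prototype the $\fD_1$-cylinders necessarily cross $\fC_2$, and $\fD_2$ lives in $\overline{\fC_2}$; this is exactly what Proposition~\ref{prop_caseA} exploits.

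\textbf{Several supporting steps also fail.}
\begin{itemize}
\item Being unaffected by the single deformation $\sigma_{\fC_2}$ does not make the cylinders in $\overline{\fC_1}$ $\cM$-parallel. That requires staying parallel under all small deformations in $\cM$.
\item The proposed rescue via isotropy is circular. Standard deformations of disjoint parallel classes are isotropic. For a horizontal and a vertical class, however, the pairing of $\sigma_{\fC_1}$ and $\sigma_{\fD_2}$ is, up to sign, the area of $\fC_1\cap\fD_2$. It vanishes exactly when the classes are disjoint, which is what must be shown.
\item Perturbing by $\sigma_{\fC_2}$ gives no reason for the vertical direction to become completely periodic.
\item A ``Smillie--Weiss type density'' inside $\overline{\fC_1}$ does not supply transverse closed geodesics. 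The gluing pattern of the cylinder diagram survives small perturbations and may leave $\overline{\fC_1}$ with none.
\item In Step~1, the appeal to ``cylinder finiteness'' and to ``the usual argument'' for a second horizontal class is not a proof.
\end{itemize}

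\textbf{The missing ingredient} is the existence, after a perturbation in $\cM$, of an equivalence class disjoint from and not parallel to $\fC_1$. Its direction and that of $\fC_1$ must also be made simultaneously periodic. This is what the cited lemma supplies, and it is what your outline would need to prove.
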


This lemma reduces the analysis to a single prototype surface in the orbit closure, which carries two distinct cylinder equivalence classes in each of the horizontal and vertical directions, related by the inclusions above.

\begin{defn}
A cylinder equivalence class $\fC$ is said to be \emph{generic} if every saddle connection on the boundary of any cylinder in $\fC$ is generically parallel to the core curves of $\fC$, and if $\fC$ attains the maximal number of cylinders locally.
\end{defn}

The prototype may be chosen so that the equivalence classes $\fC_1$ and $\fD_2$ are generic. No genericity assumption is made for the remaining equivalence classes.

Given a cylinder equivalence class $\mathbf{C}$ on a translation surface $(X,\omega)$, define the standard cylinder dilation $a_t^{\mathbf{C}}(X,\omega)$ as follows: first rotate the surface so that the cylinders in $\mathbf{C}$ are horizontal, then apply
\[
a_t=
\begin{pmatrix}
1 & 0\\
0 & e^t
\end{pmatrix}
\]
to the cylinders in $\mathbf{C}$ alone, and finally apply the inverse rotation. We then define
\[
\Col_{\mathbf{C}}(X,\omega)=\lim_{t\to -\infty} a_t^{\mathbf{C}}(X,\omega).
\]
Geometrically, this operation collapses the cylinders in $\mathbf{C}$ in the direction transverse to their core curves, and we consider the case in which this collapse produces a degenerate surface. The limit is taken in the partial compactification of \cite{mirzakhani2017boundary,chen2021wysiwyg}.

Now suppose that $(X,\omega)$ lies in an affine invariant subvariety $\mathcal M$. Then by \cite{wright2015cylinder}, $a_t^{\mathbf{C}}(X,\omega)\in \mathcal M$ for all $t\in \mathbb R$. It follows that the limit $\Col_{\mathbf{C}}(X,\omega)$ is contained in a boundary affine invariant subvariety, which we denote by $\mathcal M_{\mathbf{C}}$. For each generic equivalence class $\fC$ considered below, the rel-zero assumption and \cite[Corollary~5.2, Remark~6.6, and Lemma~6.5]{apisa2023high} imply that all cylinders in $\fC$ collapse simultaneously and that
\[
\dim_{\bC}\cM_{\fC}=\dim_{\bC}\cM-1.
\]
Moreover, \cite[Corollary~4.26]{apisa2023high} gives $\rk(\cM_{\fC})=\rk(\cM)-1$. Hence, a codimension one boundary component of a rank $2$ rel $0$ orbit closure has rank $1$ rel $1$, and is cylinder rigid by \cite[Proposition 4.12]{Apisa2025}.

\begin{defn}
A cylinder $C$ is \emph{free} if every cylinder deformation supported on $C$ remains in the ambient invariant subvariety $\mathcal{M}$.
\end{defn}

With the notation of Lemma \ref{lem:prototype}, we distinguish two cases:
\begin{itemize}
\item \textbf{Case A:} Every cylinder in the complement of $\mathbf{C}_1$ is free, and the same holds on all sufficiently nearby surfaces in $\mathcal{M}$;
\item \textbf{Case B:} $|\fC_1|=|\fD_2|=2$.
\end{itemize}

These two cases exhaust all possibilities. Indeed, suppose that Case~A does not hold. If every cylinder on every surface in $\cM$ were free, then Case~A would hold trivially; hence some surface in $\cM$ carries a non-free cylinder equivalence class, which we denote by $\fC_1$, and $|\fC_1|\ge 2$. Since Case~A fails, after replacing $(X,\omega)$ by a nearby surface in $\cM$, the complement of $\fC_1$ contains a non-free equivalence class $\fD_2$, so $|\fD_2|\ge 2$. The cylinders of $\fC_1\cup\fD_2$ are pairwise disjoint, and a surface in $\cH(6)$ admits at most four pairwise disjoint cylinders since the core curves of disjoint cylinders span an isotropic subspace of $H_1(X;\mathbb{R})$, and these core curves are linearly independent by the Gauss--Bonnet observation below. Hence $|\fC_1|+|\fD_2|\le 4$, and so $|\fC_1|=|\fD_2|=2$. Finally, after rotating so that $\fC_1$ is horizontal and $\fD_2$ is vertical, the Prototype Lemma applies to the pair $(\fC_1,\fD_2)$, as in the proof of \cite[Proposition~4]{apisa2024shortproofclassificationhigher}, and yields a prototype as in Lemma~\ref{lem:prototype} whose distinguished classes are $\fC_1$ and $\fD_2$. Therefore we are in Case~B.

These two cases will be analyzed separately below.

\subsubsection{Proving Case A}
Following \cite[Section~7]{apisa2023high}, let $C$ be a cylinder on a translation surface $(X,\omega)$ contained in the affine invariant subvariety $\mathcal{M}$. We say a cylinder $C$ is \emph{nested} in a cylinder $D$ if $C\subset \overline{D}$ and $C$ crosses $D$ exactly once. If $C$ is nested in $D$, then there is a saddle connection appearing in both the top and bottom boundaries of $D$ crossing $D$ exactly once, which serves as a \textit{cross curve} for $C$.

The following result is the Nested Free Cylinder Theorem \cite[Theorem~7.2]{apisa2023high}.

\begin{lem}\label{lem_nestedfree}
Let $\mathcal{M}$ be an affine invariant subvariety with rel $0$. If $(X,\omega)\in\mathcal{M}$ admits a nested free cylinder, then $\mathcal{M}$ is either a full stratum of Abelian differentials or a quadratic double.
\end{lem}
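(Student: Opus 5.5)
The plan is to argue by induction on genus, in the spirit of \cite[Section~7]{apisa2023high}: degenerate the nested free cylinder, identify the boundary locus, and lift the identification back to $\cM$. Let $C$ be the nested free cylinder, nested in the cylinder $D$, with $s$ a cross curve for $C$, that is, a saddle connection in both boundary components of $D$ crossing $D$ exactly once. Throughout, $\cM$ is rel $0$, and the conclusion to aim for is that $\cM$ is a full stratum of Abelian differentials or a quadratic double.

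\textbf{Step 1 (free cylinder gives a tangent vector).} Since $C$ is free, $\gamma_C^*\in T_{(X,\omega)}\cM$. Because $\cM$ has rel $0$, the projection $p$ is injective on $T_{(X,\omega)}\cM$. Hence $p(\gamma_C^*)\neq 0$, and by Avila--Eskin--M\"oller it lies in a symplectic subspace.

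\textbf{Step 2 (exploit the nesting).} Stretching and shearing $C$ alone stays in $\cM$. Using $s$, I would choose a shear of $C$ that makes $s$, concatenated with a segment through $C$, close up into a new cylinder $E$ that crosses $C$ exactly once. Then $\gamma_C\cdot\gamma_E=\pm 1$. Applying the cylinder deformation theorem to the class of $E$, together with the freeness of $C$, I would aim to show that the standard deformation of $E$ is itself supported on $E$ alone. The desired conclusion is that the pair $(C,E)$ behaves like a free handle: both $\gamma_C^*$ and $\gamma_E^*$ are tangent to $\cM$.

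\textbf{Step 3 (degenerate and induct).} Collapse $C$ using $\Col_{C}$, or pinch the handle spanned by $C$ and $E$. This lands in a boundary affine invariant subvariety $\cM'$ of lower genus, to which the Mirzakhani--Wright boundary theory \cite{mirzakhani2017boundary} applies. I would then check that $\cM'$ again has rel $0$ and again contains a nested free cylinder, inherited from a parallel free cylinder. By induction, with the base case in genus one or two handled by McMullen's classification, $\cM'$ is a full stratum or a quadratic double.

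\textbf{Step 4 (reassemble).} Since the degenerated handle is free, $\cM$ should be locally the product of $\cM'$ with the two free handle parameters. If $\cM'$ is a full stratum, a dimension count then forces $\cM$ to be a full stratum. If $\cM'$ is a quadratic double, the free handle must be compatible with the involution, which forces $\cM$ to be a quadratic double.

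\textbf{Main obstacle.} I expect this last lifting step to be the hardest. Knowing the boundary locus and the local product structure does not immediately determine $\cM$: one must exclude intermediate loci, for example covering constructions in which the handle is glued in non-generically. To handle this, I would compare $\dim\cM$ with the dimension of the candidate full stratum or quadratic double, using the rel $0$ hypothesis to pin down $\dim\cM=\dim\cM'+2$, and then use irreducibility of the candidate to conclude equality.
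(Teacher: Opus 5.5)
The paper gives no proof of this lemma: it quotes it as the Nested Free Cylinder Theorem \cite[Theorem~7.2]{apisa2023high}. Judged on its own, your sketch has a genuine gap at the induction step.

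Suppose you collapse the free cylinder $C$ alone. Then a cross curve of $C$ (e.g.\ $s$) becomes the vanishing cycle, and $\gamma_C^*\in T_{(X,\omega)}\cM$ pairs with it to $\pm1$. So the boundary component satisfies $\dim_{\bC}\cM'=\dim_{\bC}\cM-1=2\,\rk\cM-1$. This is odd, and since $\dim_{\bC}=2\,\mathrm{rank}+\mathrm{rel}$, the rel of $\cM'$ is odd and hence nonzero. Generically the rank drops by one and the rel becomes one; this is exactly the rank $1$ rel $1$ boundary phenomenon the paper's strategy rests on. So $\cM'$ never satisfies the rel $0$ hypothesis, and your inductive statement cannot be applied to it. Your claim that $\cM'$ again contains a nested free cylinder ``inherited from a parallel free cylinder'' is also unsupported, since the only free cylinder you are given is the one you collapsed. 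Moreover, when $s$ joins two distinct zeros the collapse merges them without lowering the genus, so induction on genus is not even well-founded.

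The other two load-bearing steps are asserted rather than proved. In Step~2, the Cylinder Deformation Theorem only places the standard deformation of the whole $\cM$-equivalence class of $E$ in $T_{(X,\omega)}\cM$. Freeness of $C$ gives no control over which cylinders are $\cM$-parallel to $E$, so ``$\gamma_E^*$ is tangent'' has no justification. Note also that $D$ itself already crosses $C$ exactly once, so building $E$ is unnecessary; the real question would be whether $D$ is free, and that is not automatic. Without this, the ``pinch the handle'' alternative, the only version of Step~3 that could preserve rel $0$, is unavailable.

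Step~4 then assumes a local product structure $\cM\approx\cM'\times\{\text{handle parameters}\}$, which is not a known result; reconstructing $\cM$ from a boundary component is the heart of the theorem. A dimension count can identify $\cM$ with a stratum component only once you know exactly which boundary stratum $\cM'$ fills and how its dimension compares to the ambient one. In the quadratic double case, dimension and irreducibility say nothing until you know $\cM$ lies inside the candidate quadratic double, i.e.\ that the involution on boundary surfaces extends to nearby surfaces of $\cM$. That extension is precisely what must be proved, and the proposal supplies no mechanism for it; within this paper the statement should simply be taken from \cite{apisa2023high}.
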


This lemma immediately yields the proof of Case A.

\begin{prop}\label{prop_caseA}
If $\cM$ is rank $2$ rel $0$ in $\cH(6)$ and is in Case A, then $\cM$ is either a full stratum of Abelian differentials or a quadratic double. %
\end{prop}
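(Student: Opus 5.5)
The plan is to produce a nested free cylinder on some surface of $\cM$ and then apply Lemma~\ref{lem_nestedfree}. Since $\cM\subset\cH(6)$ has rel $0$, the lemma then gives that $\cM$ is a full stratum or a quadratic double, as required.

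We work on the prototype surface $(X,\omega)$ of Lemma~\ref{lem:prototype}. There $\fC_1\subseteq\fD_1$ and $\fD_2\subseteq\fC_2$. Because we are in Case A, every cylinder outside $\fC_1$ is free, and this persists on nearby surfaces. In particular, each vertical cylinder $D\in\fD_2$ is free, and $D$ lies in the complement of the horizontal cylinders of $\fC_1$. The goal is to find a free cylinder nested in some other cylinder.

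\textbf{Construction.} Take a vertical cylinder $D\in\fD_2$. Since $\fD_2\subseteq\fC_2$, the cylinder $D$ lies inside the closure of the union of the horizontal $\fC_2$-cylinders. By genericity of $\fD_2$, its boundary saddle connections are vertical, so $D$ crosses horizontal cylinders of $\fC_2$.
\begin{enumerate}
\item If $D$ crosses a single horizontal cylinder $C$ exactly once, then $D$ is nested in $C$. The vertical saddle connection on the boundary of $D$ joins the top and bottom of $C$, so it serves as the cross curve. This gives the nested free cylinder directly.
\item In general, $D$ may cross several horizontal cylinders or cross one cylinder repeatedly. In that case I would exchange the roles of the two directions. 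Each horizontal cylinder $C\in\fC_2$ is itself free, because it lies outside $\fC_1$. So I would look for a horizontal free cylinder nested in a vertical cylinder of $\fD_1$.
\item If neither option works at the prototype, I would deform. Using the freeness of all cylinders in $\fC_2$ and $\fD_2$, shear and stretch them independently until a cylinder with a short cross curve appears. I would then rotate to a new periodic direction, chosen to be the direction of a saddle connection crossing a single free cylinder once. In that direction the free cylinder sits nested inside a larger cylinder.
\end{enumerate}

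\textbf{Main obstacle.} The hard step is guaranteeing nestedness, that is, exactly one crossing. Freeness is automatic in Case A, but the combinatorics of how $\fD_2$ meets $\fC_2$ need not immediately give a single crossing. My first attempt would be to use the dimension count: $\fC_1\cup\fD_2$ has at most four disjoint cylinders, and the Gauss--Bonnet bound limits the number of cylinders in $\cH(6)$. With these bounds, I would argue that some $\fC_2$-cylinder is crossed exactly once by some $\fD_2$-cylinder, possibly after twisting free cylinders to reduce the number of crossings.

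Failing that, I would appeal to the argument in \cite[Section~7]{apisa2023high}. There, the existence of free cylinders in the complement of an equivalence class, on all nearby surfaces, is shown to produce a nested free cylinder after a cylinder degeneration. I would then invoke Lemma~\ref{lem_nestedfree} to conclude.
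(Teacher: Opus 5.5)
Your reduction to Lemma~\ref{lem_nestedfree} is the right one, and your item (1) is fine as far as it goes. However, the proposal never produces a nested free cylinder in general. Items (2) and (3), and the ``main obstacle'' paragraph, are contingency plans rather than an argument, and you yourself flag that nestedness is not established.

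Item (2) does not work as stated, because the Case~A hypothesis is asymmetric. Only cylinders outside $\fC_1$ are known to be free, and nothing places a horizontal $\fC_2$-cylinder inside a single vertical $\fD_1$-cylinder. The count of disjoint cylinders in $\fC_1\cup\fD_2$ is what separates Case~A from Case~B, so it plays no role here. Trying to show that some $\fD_2$-cylinder crosses some $\fC_2$-cylinder exactly once aims at the wrong object: a $\fD_2$-cylinder can legitimately cross a horizontal cylinder several times.

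The missing idea is that a free cylinder forms its own equivalence class. In Case~A every cylinder of $\fC_2$ is free. Since $\fC_2$ is a single equivalence class, it follows that $\fC_2=\{C\}$ is one horizontal cylinder.

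Now $\fD_2\subseteq\fC_2$ and $\fD_2$ is disjoint from $\fC_1$, so each vertical $\fD_2$-cylinder lies in $\overline{C}$. Its core curve leaves $C$ through the top boundary at an interior point of some horizontal saddle connection. It must immediately re-enter $C$, since the only other horizontal cylinders are those of $\fC_1$. Hence that saddle connection appears on both the top and bottom boundaries of $C$.

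Shear $C$ so that the two occurrences are vertically aligned. This produces a \emph{new} simple vertical cylinder $D\subseteq\overline{C}$ that crosses $C$ exactly once, with that saddle connection as its cross curve. Since $D$ lies outside $\fC_1$, Case~A makes it free, so $D$ is a nested free cylinder and Lemma~\ref{lem_nestedfree} finishes the proof.

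So the nested free cylinder is not one of the $\fD_2$-cylinders. It is manufactured from the single cylinder $C$ after a shear, and you need the collapse of $\fC_2$ to one cylinder to do that.
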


\begin{proof}Let $(X,\omega)$ be the prototype given by Lemma~\ref{lem:prototype},
taken sufficiently close to the original surface so that, by Case~A, every cylinder in the complement of $\fC_1$ remains free. In particular, every cylinder in $\fC_2$ is free, so each forms its own equivalence class. Since $\fC_2$ is itself an equivalence class, it follows that $\fC_2$ consists of a single cylinder.

The containment $\fD_2\subseteq\fC_2$, together with the fact that $\fD_2$ is disjoint from $\fC_1$, implies that some saddle connection appears on both the top and bottom boundaries of $C$. After shearing $C$, we may align these two occurrences. They then determine a simple cylinder $D\subseteq\overline C$ whose core curve crosses $C$ exactly once. Since $D$ lies in the complement of $\fC_1$, it is free by Case~A. Thus $D$ is a nested free cylinder, and Lemma~\ref{lem_nestedfree} gives the result.
\end{proof}

\subsubsection{Proving Case B}

Let $(X,\omega)\in \mathcal M$ be a prototype as in Lemma \ref{lem:prototype}. Recall that a cylinder is called \emph{simple} if each of its boundary components consists of a single saddle connection.%

Our first goal is to show that collapsing the equivalence class $\fC_1$ or $\fD_2$ yields a surface of genus at most two. We begin with the following auxiliary lemma.

\begin{lem}\label{lem_nohsearinc}
Suppose that $\mathcal M$ contains a translation surface $(X,\omega)$ satisfying the hypotheses of Lemma~\ref{lem:prototype}, and assume that the equivalence class $\fC_1$ consists of two cylinders $C_1$ and $C_2$. After shearing $\fC_1$, we may arrange that $C_1$ admits a vertical cross curve, and that there exists a vertical saddle connection $s\subset \bar\fC_1$ intersecting the core curve of $C_2$.
\end{lem}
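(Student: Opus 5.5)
We work on the prototype $(X,\omega)$ from Lemma~\ref{lem:prototype}, so $\fC_1=\{C_1,C_2\}$ consists of horizontal cylinders and $\fC_1\subseteq\fD_1$. The key point is that the vertical direction is periodic and every cylinder of $\fC_1$ lies inside the vertical class $\fD_1$. Hence each vertical cylinder of $\fD_1$ that meets $C_1$ or $C_2$ crosses it along vertical segments. Those vertical segments are unions of pieces of vertical saddle connections on the boundary of vertical cylinders. Shearing $\fC_1$ is a standard cylinder deformation tangent to $\cM$ by the Cylinder Deformation Theorem \cite{wright2015cylinder}. It lets us change the twist of each of $C_1,C_2$ independently in the horizontal direction, up to the constraint of the equivalence class. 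Since $\fC_1$ is generic, we may shear both cylinders by the same real parameter $t$ using $\sigma_{\fC_1}$. The heights of $C_1$ and $C_2$ have a fixed ratio, so the twists change at proportional rates.

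\textbf{Step 1: a vertical cross curve in $C_1$.} Pick a saddle connection $\alpha$ on the top boundary of $C_1$ and a saddle connection $\beta$ on its bottom boundary whose endpoints are zeros (in $\cH(6)$, the unique zero). Under the shear $u_t^{\fC_1}$, the holonomy of a segment crossing $C_1$ from an endpoint of $\beta$ to an endpoint of $\alpha$ has imaginary part $h_{C_1}$ and real part $x_0+t\,h_{C_1}$. Choosing $t=-x_0/h_{C_1}$ makes this segment vertical, so it is a saddle connection crossing $C_1$ exactly once, i.e.\ a vertical cross curve. We must check that the segment stays inside $\overline{C_1}$, which holds because it runs from the bottom boundary to the top boundary in the interior.

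\textbf{Step 2: a vertical saddle connection $s\subset\overline{\fC_1}$ meeting the core of $C_2$.} After the shear of Step 1, $C_2$ has been twisted by the amount $t\,h_{C_2}$, which we no longer control freely. Instead of making a cross curve of $C_2$ vertical, we use the vertical periodicity of the prototype. Every vertical trajectory through $C_2$ lies in some vertical cylinder, and these cylinders lie in $\fD_1$ because $\fC_1\subseteq\fD_1$ and equivalence classes are determined by parallelism. Their boundaries consist of vertical saddle connections. Because $C_2\subset\overline{\fD_1}$ and the vertical cylinders meeting $C_2$ cannot all avoid boundary saddle connections passing through $C_2$, some vertical saddle connection crosses the core curve of $C_2$. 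That saddle connection must enter $C_2$ through a zero on its boundary. We then need to argue that the full saddle connection $s$ stays in $\overline{C_1}\cup\overline{C_2}$. Leaving $\overline{\fC_1}$ would put vertical flow into $\fC_2$, and such a flow line would then lie in a cylinder in both $\fC_2$ and $\fD_1$. This conflicts with the disjointness forced by $\fD_2\subseteq\fC_2$ and the genericity of $\fD_2$.

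\textbf{Main obstacle.} The delicate step is the last containment $s\subset\bar\fC_1$. We must rule out that the vertical saddle connection through $C_2$ exits into $\fC_2$ before hitting a zero. The plan is to observe that any vertical segment which leaves $\overline{\fC_1}$ into a cylinder of $\fC_2$ crosses a horizontal cylinder not in $\fC_1$, along a vertical cylinder of $\fD_1$. We would then use the tangent space at the prototype to derive a contradiction. Specifically, the standard deformation of $\fD_1$ would have to move the heights of $\fC_2$-cylinders compatibly, contradicting the fact that $\sigma_{\fC_1}$ and $\sigma_{\fC_2}$ are independent directions in a rank~$2$ rel~$0$ locus. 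If this fails, the fallback is to shorten $s$ by choosing the first zero encountered along the vertical from a point of the core of $C_2$, in both directions. We would then argue, via the fact that the boundaries of $C_2$ consist of saddle connections, that this zero is reached while still in $\overline{C_2}$ or in the adjacent $C_1$.
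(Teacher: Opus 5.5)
\textbf{There is a genuine gap, in Step 2.} Your Step 1 is fine and is the paper's first move. The whole content of the lemma is the containment $s\subset\overline{\fC_1}$, and local geometry cannot give it.

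Once the shear $\sigma_{\fC_1}$ has been spent making a cross curve of $C_1$ vertical, the twist of $C_2$ is locked to that of $C_1$. For a general twist of $C_2$, a vertical line through $C_2$ exits $\overline{C_2}$ (or the adjacent $C_1$) through the interior of a horizontal boundary saddle connection, not through a zero. So the existence of $s$ is a codimension-one condition on the relative twist, and nothing in your Step 2 uses $\cM$ in a way that could force it. This has three consequences:
\begin{itemize}
\item Your fallback, that the first zero is reached in $\overline{C_2}$ or the adjacent $C_1$, is false in general.
\item Your contradiction for $s$ leaving $\overline{\fC_1}$ is unfounded. The Prototype Lemma does not prevent cylinders of $\fD_1$ from crossing cylinders of $\fC_2$. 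The inclusions $\fC_1\subseteq\fD_1$ and $\fD_2\subseteq\fC_2$ are fully compatible with $\fD_1\cap\fC_2\neq\emptyset$, and genericity of $\fD_2$ says nothing about $\fD_1$.
\item The appeal to $\sigma_{\fC_1}$ and $\sigma_{\fC_2}$ being independent does not yield an actual contradiction.
\end{itemize}
There is also a further slip. After the shear in Step 1 the surface is no longer the prototype, so its vertical direction need not be periodic. The vertical cylinders of $\fD_1$ that Step 2 relies on therefore need not exist. In any case, on a vertically periodic surface some vertical saddle connection trivially crosses $\gamma_{C_2}$; the point is where it lives.

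\textbf{The missing idea is a global input from $\cM$.} The paper argues by contradiction using the Cylinder Degeneration Dichotomy \cite[Theorem~5.1]{apisa2023high}.
\begin{enumerate}
\item Suppose no vertical saddle connection in $\overline{\fC_1}$ meets $\gamma_{C_2}$.
\item Collapse $\fC_1$ vertically along $v=-i\bigl(h_{C_1}\gamma_{C_1}^*+h_{C_2}\gamma_{C_2}^*\bigr)$. Since $\fC_1$ is generic and $\cM$ has rank $2$ rel $0$, this collapse drops the rank.
\item By the dichotomy, $\Col_v(\fC_1)$ must then be rel-scalable, and in particular its graph of saddle connections contains no absolute cycle.
\item The saddle connections that shrink to points in this collapse are exactly the vertical saddle connections in $\overline{\fC_1}$. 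By assumption $\gamma_{C_2}$ crosses none of these vanishing cycles.
\item Hence $\gamma_{C_2}$ survives as a closed curve in $\Col_v(\fC_1)$, which is an absolute cycle. This contradicts step 3.
\end{enumerate}
It is this rank-drop argument, not the geometry of $\fD_1$ or $\fD_2$, that forces $C_2$'s twist to align once $C_1$'s twist is fixed. Your proposal needs this step, or some equivalent use of the tangent space of $\cM$, to close the gap.
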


\begin{proof}
Shear $\fC_1$ so that $C_1$ admits a vertical cross curve. Suppose, toward a contradiction, that no vertical saddle connection contained in $\bar\fC_1$ intersects the core curve of $C_2$. We show that this situation cannot occur.

We use $\gamma_{C_i}$ to represent the core curve of $C_i$. Choose the direction
\[
v = - i( h_{C_1}\gamma_{C_1}^* + h_{C_2}\gamma_{C_2}^*),
\]
and consider the collapse $\Col_v(\fC_1)$. By the Cylinder Degeneration Dichotomy \cite[Theorem~5.1]{apisa2023high}, since this collapse reduces the rank, $\Col_v(\fC_1)$ must be rel-scalable; in particular, its directed graph of saddle connections cannot contain any absolute cycles.

However, since collapsing $C_2$ does not lose any saddle connection, a core curve of $C_2$ does not cross a vanishing cycle, so $\Col_{\fC_1}(\gamma_{C_2})$ remains a simple closed curve, and we get an absolute cycle that appears in $\Col_v(\fC_1)$. This contradiction shows that the configuration in Figure \ref{F:cylindercannotshear} cannot arise, and hence after shearing $\fC_1$, we may arrange that $C_1$ admits a vertical cross curve, and that there exists a vertical saddle connection $s\subset\overline{\fC_1}$ intersecting the core curve of $C_2$.

\begin{figure}[htbp]
\centering
\includegraphics[scale=0.3]{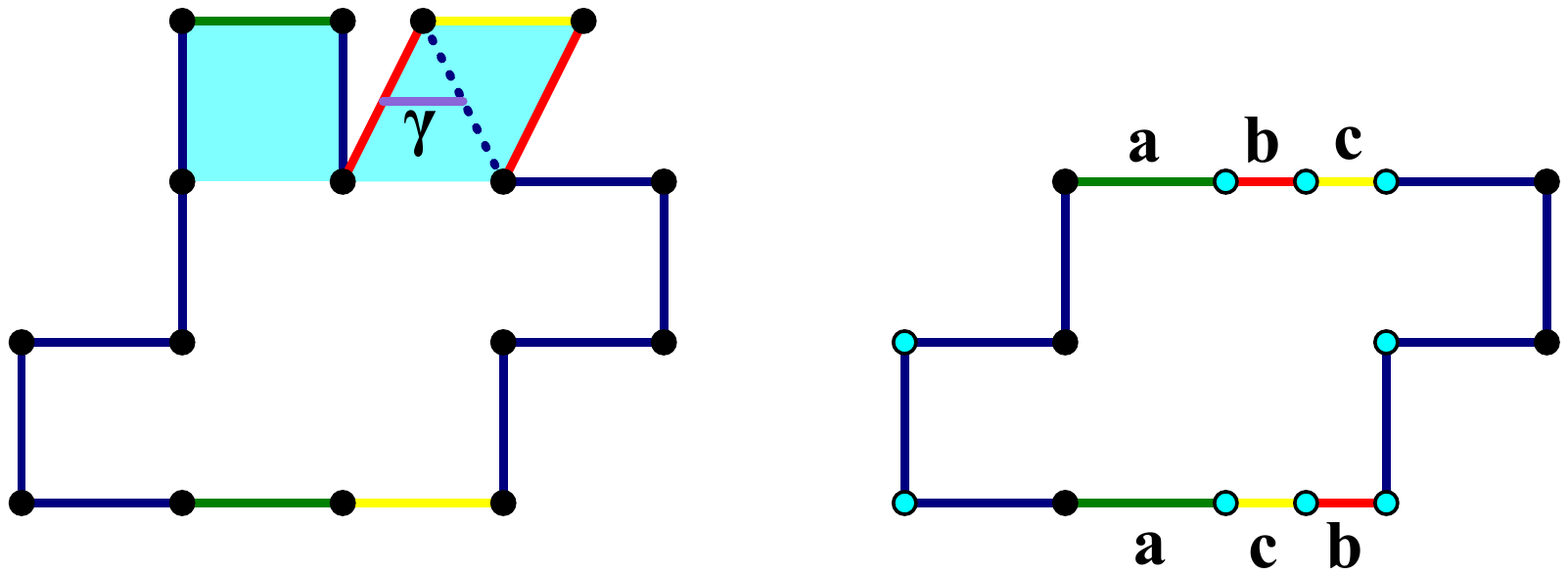}
\caption{Configuration in Lemma \ref{lem_nohsearinc}. The union of the saddle connections labelled $b$ and $c$ forms such an absolute cycle.}
\label{F:cylindercannotshear}
\end{figure}
\end{proof}

For a translation surface $(X,\omega)$, we denote by $g(X,\omega)$ its genus. If $(X,\omega)$ is disconnected, then $g(X,\omega)$ is defined to be the sum of the genera of its connected components.

We will use the following consequence of Gauss--Bonnet. On a surface in $\cH(6)$, the core curves of pairwise disjoint distinct cylinders are linearly independent in $H_1(X;\mathbb R)$. Indeed, otherwise some nonempty subcollection of these curves would bound a proper subsurface. Since $X$ has a unique zero, one component of the complement contains no zero. Its boundary consists of closed geodesics and has no corners, so Gauss--Bonnet gives $\chi=0$. As this component has nonempty boundary, it must be an annulus. This would force two of the original cylinders to lie in the same maximal cylinder, a contradiction. In particular, the core curves of two distinct cylinders cannot be homologous.

\begin{lem}\label{lem_existclassc}
Let $\cM\subset\cH(6)$ be a rank $2$ rel $0$ affine invariant subvariety in Case~B. Then there exists a prototype $(X,\omega)\in\cM$, with cylinder equivalence classes $\fC_1$ and $\fD_2$ as in Lemma~\ref{lem:prototype}, such that
\[
|\fC_1|=|\fD_2|=2,
\qquad
g\bigl(\Col_{\fC_1}(X,\omega)\bigr)\le 2,
\qquad
g\bigl(\Col_{\fD_2}(X,\omega)\bigr)\le 2.
\]
Moreover, at least one of the two collapsed surfaces
\[
\Col_{\fC_1}(X,\omega)
\quad\text{and}\quad
\Col_{\fD_2}(X,\omega)
\]
is connected.
\end{lem}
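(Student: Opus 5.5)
We take the prototype $(X,\omega)$ produced at the end of the Case~B discussion, so that $|\fC_1|=|\fD_2|=2$ already holds and $\fC_1,\fD_2$ are generic. We then apply Lemma~\ref{lem_nohsearinc} to shear $\fC_1$ (and symmetrically $\fD_2$). The shear is a cylinder deformation, so the new surface stays in $\cM$ and the prototype properties persist. The genus of the collapse will be computed homologically. Collapsing a generic class $\fC$ of two cylinders kills the classes of the vanishing saddle connections crossing $\fC$. Since $\cM$ has rel $0$ and the collapse drops rank by one (\cite[Corollary~4.26]{apisa2023high}), the boundary has rank $1$ rel $1$. Its absolute homology therefore has dimension at most $2g(\Col_{\fC}(X,\omega))$, and this is constrained by how many independent cycles die.

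The key count runs as follows. By the Gauss--Bonnet observation, the core curves $\gamma_{C_1},\gamma_{C_2}$ are linearly independent in $H_1(X;\mathbb R)$. Lemma~\ref{lem_nohsearinc} provides a vertical cross curve $s_1$ of $C_1$ and a vertical saddle connection $s\subset\overline{\fC_1}$ crossing $\gamma_{C_2}$. These close up to absolute classes $\delta_1,\delta_2$ with intersection matrix $(\gamma_{C_i}\cdot\delta_j)$ nondegenerate (upper triangular with nonzero diagonal). Under collapse, the classes $\delta_1,\delta_2$ become trivial, and the $\gamma_{C_i}$ are no longer paired with anything surviving. So a symplectic subspace of dimension $4$ dies, and the genus drops by at least $2$, from $4$ to at most $2$. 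If the collapse disconnects, the same count applies to the sum of genera. The identical argument with the roles of horizontal and vertical exchanged gives $g(\Col_{\fD_2}(X,\omega))\le 2$. This step requires checking that $\delta_1,\delta_2$ are genuinely absolute cycles, using the unique zero: every saddle connection is a closed loop. It also requires checking that they vanish in the limit, which I would get from the WYSIWYG description in \cite{chen2021wysiwyg}.

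For the connectedness claim, I would argue by contradiction. Suppose both collapses are disconnected. Then $X\setminus\overline{\fC_1}$ separates into pieces each meeting $\fC_1$, and similarly for $\fD_2$. Because $\fD_2\subseteq\fC_2$ lies in the complement of $\fC_1$ and $\fC_1\subseteq\fD_1$ lies in the complement of $\fD_2$, each class sits inside one component of the complement of the other. Combined with disjointness and the bound of four pairwise disjoint cylinders with independent core curves, this should force either a homology relation among the four core curves of $\fC_1\cup\fD_2$, contradicting Gauss--Bonnet, or a component of genus $0$ with no zero, contradicting the single-zero structure.

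I expect the main obstacle to be this connectedness argument, specifically controlling how the two disconnections interact. My first attempt would be an Euler characteristic computation on $X\setminus(\overline{\fC_1}\cup\overline{\fD_2})$, exploiting that all four cylinders are simultaneously disjoint.
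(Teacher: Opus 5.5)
Your genus bound is essentially the paper's argument in different language: two independent, isotropic vanishing cycles in $\overline{\fC_1}$ force the genus to drop by at least two (the paper cites the Mirzakhani--Wright boundary formula with $\dim V\ge 2$). Your upper-triangular matrix $(\gamma_{C_i}\cdot\delta_j)$ is in fact a cleaner justification of the independence of $\delta_1,\delta_2$ than the paper's appeal to non-homologous core curves.

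The gap is the connectedness statement, for which you give no argument, only a hoped-for contradiction. Deducing that $X\setminus\overline{\fC_1}$ separates is at best a weak necessary condition. The collapse reglues the top and bottom of each cylinder along positive-length segments, and whether the WYSIWYG limit disconnects depends on how the vanishing saddle connections pinch the unique zero. Separation of the complement does not capture this. More seriously, your sketch uses only disjointness, the Gauss--Bonnet independence of the four core curves, and an Euler characteristic count. It never uses that $\cM$ is an affine invariant subvariety, and you have not identified any topological reason why both collapses could not disconnect.

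The paper does not argue by contradiction from two disconnections. It assumes $\Col_{\fD_2}(X,\omega)$ is disconnected and brings in dynamical input. First, $\cM_{\fD_2}$ is prime \cite[Lemma~9.1]{apisa2023high}. Second, by \cite[Theorem~1.3]{chen2021wysiwyg}, a boundary deformation that moves absolute periods on one component moves them on every component. Since $\fC_1$ is disjoint from $\fD_2$, the shear $\sigma_{\fC_1}$ survives to the boundary, so \emph{every} component of $\Col_{\fD_2}(X,\omega)$ contains a $\fC_1$-cylinder. This is the nontrivial content your phrase ``pieces each meeting $\fC_1$'' would need. With total genus at most $2$ there are then two tori, each supporting nontrivial rel, and the codimension bound from $\dim V\ge 2$ forces $\cH(0^2)\times\cH(0^2)$. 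There each $\fC_1$-cylinder is simple, hence simple on $(X,\omega)$. Collapsing disjoint simple cylinders glues each top saddle connection to the bottom one along its full length, so $\Col_{\fC_1}(X,\omega)$ is connected. This chain, primality plus WYSIWYG and then dimension counting to force simplicity, is the idea your proposal is missing.
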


\begin{proof}
Assume that
\[
\fC_1=\{C_1,C_2\}.
\]
After shearing the cylinder equivalence class $\fC_1$, we may suppose that $C_1$ contains a vertical cross curve. By Lemma~\ref{lem_nohsearinc}, there then exists a vertical saddle connection crossing $C_2$.

By the Gauss--Bonnet observation above, the core curves of the two cylinders in $\fC_1$ are not homologous. It follows that the two vertical saddle connections above span a two-dimensional subspace of $H_1(X;\bC)$. Using the notation of \cite{mirzakhani2017boundary}, we deduce that the space of vanishing cycles $V$ has dimension at least two. Moreover, these vanishing cycles are isotropic with respect to the intersection form. Therefore, by the Mirzakhani--Wright boundary formula \cite[Theorem~2.9]{mirzakhani2017boundary},
\[
g(\Col_{\fC_1}(X,\omega))\le 4-2=2.
\]
The same argument applies to $\fD_2$, and hence $g(\Col_{\fD_2}(X,\omega))\le 2$.

Now suppose that $\Col_{\fD_2}(X,\omega)$ is disconnected. We claim that $\Col_{\fC_1}(X,\omega)$ is connected.

By \cite[Lemma~9.1]{apisa2023high}, collapsing $\fD_2$ yields a prime (see \cite[Definition 3.1]{apisa2023high}) boundary affine invariant subvariety $\cM_{\fD_2}$. Moreover, by \cite[Theorem~1.3]{chen2021wysiwyg}, any deformation of $\Col_{\fD_2}(X,\omega)$ inside $\cM_{\fD_2}$ that changes the period of an absolute cycle on one component must also change the periods of absolute cycles on every component.

Since $\fD_2$ is disjoint from $\fC_1$, the standard shear $\sigma_{\fC_1}$ determines a tangent vector in $T_{\Col_{\fD_2}(X,\omega)}\cM_{\fD_2}$. Consider the deformation $\Col_{\fD_2}(X,\omega)+t\sigma_{\fC_1}$ for sufficiently small real $t$. On any component containing a cylinder from $\fC_1$, this deformation changes the periods of absolute cycles. By the preceding paragraph, it follows that the periods of absolute cycles vary on every connected component. Hence every connected component of $\Col_{\fD_2}(X,\omega)$ contains a cylinder from $\fC_1$.

We now determine the possible boundary strata. Since $g(\Col_{\fD_2}(X,\omega))\le 2$, if $\Col_{\fD_2}(X,\omega)$ is disconnected, then each connected component has genus $1$. We claim that each connected component of the boundary surface lies in a rank $1$ rel $1$ affine invariant subvariety. Indeed, this follows from the fact that $\fC_1\subseteq \fD_1$, which implies that $\Col_{\fD_2}(\fD_1)$ is supported on each component; otherwise, the original surface $(X,\omega)$ would be disconnected. By \cite[Theorem 1.5]{mirzakhani2017boundary}, $\sigma_{\Col_{\fD_2}(\fC_2)}-\sigma_{\Col_{\fD_2}(\fC_1)}$ defines a rel deformation on the boundary. Therefore each component supports nontrivial rel, and hence lies in a rank $1$ rel $1$ affine invariant subvariety.

Now $\dim \cH(6)=8$, while $\dim(\cH(0^m)\times \cH(0^n))=2+m+n$. Since $\dim V\ge 2$, the ambient boundary stratum has complex codimension at least $2$. It follows that the only remaining possibility is $\Col_{\fD_2}(X,\omega)\in \cH(0^2)\times \cH(0^2)$.

Suppose now that $\Col_{\fD_2}(X,\omega)\in \cH(0^2)\times \cH(0^2)$. Then the cylinders in $\Col_{\fD_2}(\fC_1)$ are disjoint and simple. Since $\fC_1$ is disjoint from $\fD_2$, the cylinders in $\fC_1$ remain disjoint and simple on $(X,\omega)$. Collapsing disjoint simple cylinders does not disconnect a connected surface, because each such cylinder is attached along a single saddle connection on each boundary component. Hence $\Col_{\fC_1}(X,\omega)$ is connected.
\end{proof}

Recall that if a horizontally periodic translation surface has genus $g$ and cone set $\Sigma$ (consisting of zeros together with marked points), then the number of horizontal saddle connections is
\[
N_{g,\Sigma}=2g+|\Sigma|-2.
\]
In particular, every horizontally periodic surface in $\cH(6)$ has exactly seven horizontal saddle connections.

Let $(X,\omega)$ be a horizontally periodic surface in a rank $2$ rel $0$ affine invariant subvariety in $\cH(6)$. By Lemma~\ref{lem_nohsearinc}, after rotating and shearing if necessary, one cylinder in $\fD_2$ admits a horizontal cross curve, and there is a horizontal saddle connection crossing the other. These contain at least two distinct horizontal saddle connections, and $\Col_{\fD_2}(X,\omega)$ has at most $7-2=5$ horizontal saddle connections.

In particular, for a horizontally periodic boundary surface, the possibilities for the ambient stratum are constrained by
\[
N_{g,\Sigma}=j \ \text{on } \cH(0^j),\quad
N_{g,\Sigma}=k+3 \ \text{on } \cH(2,0^k),\quad
N_{g,\Sigma}=l+4 \ \text{on } \cH(1,1,0^l).
\]
Thus, one must have
\[
j\le 5,\qquad k\le 2,\qquad l\le 1.
\]
We will refer to these as the boundary strata relevant to the present degeneration of $\cH(6)$.

\begin{lem}\label{lem_collinstra}
At least one of the degenerations $\Col_{\fD_2}(X,\omega)$ and $\Col_{\fC_1}(X,\omega)$ gives rise to a rank $1.5$ affine invariant subvariety in either $\cH(1,1,0)$ or $\cH(1,1)$.
\end{lem}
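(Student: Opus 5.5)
We argue by elimination over the list of boundary strata just compiled. By the discussion preceding the lemma, each of the two boundary subvarieties $\cM_{\fC_1}$ and $\cM_{\fD_2}$ is a codimension-one boundary component of $\cM$. It therefore has rank $1$ rel $1$ and is cylinder rigid by \cite[Proposition~4.12]{Apisa2025}. On each connected component it is a rank $1.5$ locus in genus at most $2$, by Lemma~\ref{lem_existclassc}.

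The ambient stratum of each collapsed surface is among $\cH(0^j)$ with $j\le 5$ (or a product of such), $\cH(2,0^k)$ with $k\le 2$, and $\cH(1,1,0^l)$ with $l\le 1$. The count of horizontal saddle connections gives the same constraints for $\Col_{\fC_1}$, after rotating so that $\fC_1$ plays the role of $\fD_2$, using Lemma~\ref{lem_nohsearinc}. Lemma~\ref{lem_1or2marked} excludes $\cH(2,0)$ and $\cH(2,0^2)$. The case $\cH(2)$ with no marked points is excluded for the same reason as in that proof, since rel would be zero. So it suffices to show that the two degenerations cannot both land in genus $\le 1$ strata, that is, products of $\cH(0^j)$.

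We use Lemma~\ref{lem_existclassc} to pick the connected one, say $\Col_{\fC_1}(X,\omega)$. If it has genus $2$, the surviving strata are exactly $\cH(1,1)$ and $\cH(1,1,0)$, and we are done. So assume, for contradiction, that it has genus $\le 1$, hence lies in $\cH(0^j)$ with $j\le 5$.

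In that case the Mirzakhani--Wright formula \cite[Theorem~2.9]{mirzakhani2017boundary} forces $\dim V\ge 3$. We compare dimensions. We have $\dim\cM_{\fC_1}=\dim\cM-1$, and $\dim\cM\ge 4$ for rank $2$, so $\dim\cM_{\fC_1}\ge 3$. A rank $1$ rel $1$ locus in $\cH(0^j)$ has dimension $3$, so $\dim\cM=4$. Then $\cM$ consists of rank $2$ loci of dimension $4$ in $\cH(6)$, and the two-cylinder classes $\fC_1$, $\fD_2$ both collapse to tori with marked points. I would then try to show that the genus-$3$ loss forces three independent vanishing cycles inside $\overline{\fC_1}$. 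Since $\fC_1$ has only two cylinders and the zero is unique, this can be contradicted by counting the saddle connections in $\partial\fC_1$ against the seven horizontal saddle connections. The horizontal cross curves of $\fD_2$ already use two of the seven, and the boundary of $\fC_1$ must still carry the saddle connections joining distinct cylinders of $\fC_2$.

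For the disconnected possibility $\cH(0^2)\times\cH(0^2)$, which arose for $\Col_{\fD_2}$ in Lemma~\ref{lem_existclassc}, we have already seen that the cylinders of $\fC_1$ are disjoint and simple. Collapsing two disjoint simple cylinders kills exactly two vanishing cycles, giving genus $2$ with the zero split. So $\Col_{\fC_1}$ lands in genus $2$, and hence in $\cH(1,1)$ or $\cH(1,1,0)$ after the exclusions above.

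The main obstacle is the genus $\le 1$ connected case. Ruling out a torus boundary requires a genuine combinatorial argument on how the two cylinders of $\fC_1$ (or $\fD_2$) can absorb three vanishing cycles. I would first attempt it by showing that $\overline{\fC_1}$ would then contain a subsurface of genus $\ge 2$ bounded by the two cylinders. The Gauss--Bonnet observation and the single-zero condition would then give a contradiction, as in the linear independence argument for disjoint core curves.
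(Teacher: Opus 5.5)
\textbf{There is a genuine gap: the torus case is left unproved.} Your preliminary reductions are fine and match the paper: excluding $\cH(2)$, $\cH(2,0)$ and $\cH(2,0^2)$, and treating the disconnected case $\cH(0^2)\times\cH(0^2)$ via simplicity of the $\fC_1$-cylinders. But the case you flag as the main obstacle is the whole content of the lemma. That is the case where the connected collapse lands in a torus stratum $\cH(0^j)$, $j\le 5$, and your proposal does not prove it. None of the tools you suggest closes it:
\begin{enumerate}
\item The dimension comparison is vacuous. A rank $2$ rel $0$ locus has $\dim_{\bC}\cM=4$ automatically, and $\dim_{\bC}\cM_{\fC_1}=3$ was already known before the lemma.
\item Counting saddle connections only bounds the number $j$ of marked points on the torus. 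Nothing in such a count prevents two cylinders with several boundary saddle connections from carrying three independent transverse saddle connections, i.e.\ three independent vanishing cycles.
\item The Gauss--Bonnet observation concerns disjoint core curves bounding a zero-free subsurface. It gives no handle on your \emph{subsurface of genus $\ge 2$ bounded by the two cylinders}, which you never construct.
\end{enumerate}

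\textbf{The missing idea.} Work on the torus itself rather than inside the collapsed class. With $\Col_{\fD_2}(X,\omega)$ connected in $\cH(0^j)$, look at its decomposition into $\fA=\Col_{\fD_2}(\fC_1)$ (two cylinders) and $\fB=\Col_{\fD_2}(\fC_2)$. This decomposition is constrained by the block structure of \cite[Lemma~2]{apisa2024shortproofclassificationhigher}. Then use the regluing argument. Every marked point of the torus must be an endpoint of a saddle connection of $\Col_{\fD_2}(\fD_2)$, which lies in $\overline{\Col_{\fD_2}(\fC_2)}$. Otherwise the point survives regluing $\fD_2$ unchanged, as a point of cone angle $2\pi$, which is impossible on $(X,\omega)\in\cH(6)$.

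This has three consequences:
\begin{enumerate}
\item The two $\fA$-cylinders cannot be adjacent. This rules out all three-cylinder configurations, and hence $\cH(0^j)$ for $j\le 3$.
\item The same argument kills the four-cylinder configuration in $\cH(0^5)$.
\item In the remaining cases (five cylinders in $\cH(0^5)$, four in $\cH(0^4)$), each boundary level carries exactly one marked point. So the cylinders of $\fC_1$ are simple on $(X,\omega)$. Cross curves $\delta_i$ with $\langle\delta_i,\gamma_j\rangle=\delta_{ij}$ then show that $\Col_{\fC_1}(X,\omega)$ is connected of genus $2$ in a stratum of dimension $6$. By Lemma~\ref{lem_1or2marked} it therefore lies in $\cH(1,1,0)$, which already gives the lemma's \emph{at least one} conclusion.
\end{enumerate}

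The essential input is how the uncollapsed classes and the marked points sit on one collapse. That information constrains the other collapse, and no intrinsic count inside the collapsed class replaces it.
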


\begin{proof}
We interpret the above degeneration using the Mirzakhani--Wright partial compactification. By Lemma~\ref{lem_existclassc}, we may assume that $\Col_{\fD_2}(X,\omega)$ is connected. Moreover, by \cite[Lemma~4.2]{Apisa2025}, the surface $\Col_{\fD_2}(X,\omega)$ is cylinder rigid. Since $(X,\omega)$ is a prototype, the surface $\Col_{\fD_2}(X,\omega)$ remains both horizontally and vertically periodic.

Recall from \cite[Lemma~2]{apisa2024shortproofclassificationhigher} that if a horizontally periodic rank $1.5$ invariant subvariety of tori has two subequivalence classes of cylinders, say $\fA$ and $\fB$, then any two $\fA$-blocks (resp. $\fB$-blocks) have the same height, and each $\fA$-block (resp. $\fB$-block) consists of either one cylinder or two cylinders of equal height. In our setting, we take $\fA=\Col_{\fD_2}(\fC_1)$ and $\fB=\Col_{\fD_2}(\fC_2)$. Thus the horizontal cylinder decomposition of $\Col_{\fD_2}(X,\omega)$ is constrained by this block structure.

We now analyze the remaining boundary configurations arising from collapsing $\fD_2$. We first show that $\Col_{\fD_2}(X,\omega)\notin \cH(0^5)$. Since $\fA$ consists of two cylinders and $\fB$ is nonempty, while a torus with five marked points has at most five horizontal cylinders, there are three, four, or five horizontal cylinders.

Suppose for contradiction that $\Col_{\fD_2}(X,\omega)\in \cH(0^5)$. Then $\Col_{\fD_2}(X,\omega)$ is a torus with five marked points. Since $\fD_2$ is vertical, collapsing $\fD_2$ produces a union of vertical saddle connections on $\Col_{\fD_2}(X,\omega)$. In particular, the only marked points that can disappear after regluing $\fD_2$ are the endpoints of the saddle connections in $\Col_{\fD_2}(\fD_2)$.

Suppose first that $\Col_{\fD_2}(X,\omega)$ has four horizontal cylinders. Then at least one of these horizontal cylinders has a boundary component containing at least two marked points.

We claim that the two cylinders in $\fA$ cannot be adjacent.

Indeed, suppose that two adjacent cylinders belong to $\fA$. Then the two $\fB$-cylinders are also adjacent. Consider the marked point on the common boundary component of the two cylinders in $\Col_{\fD_2}(\fC_1)$. Since $\Col_{\fD_2}(\fD_2)$ is a union of vertical saddle connections contained in $\Col_{\fD_2}(\fC_2)$, this marked point is not an endpoint of any saddle connection in $\Col_{\fD_2}(\fD_2)$. Hence it is unaffected by regluing $\fD_2$ and remains a marked point on the reglued surface. But regluing $\fD_2$ recovers the surface $(X,\omega)\in \cH(6)$, which has a unique singularity and no marked points. This is impossible. Therefore adjacent horizontal cylinders must belong to different subequivalence classes.

It follows that the labels alternate cyclically, so without loss of generality the pattern is $\fA\fB\fA\fB$. %

Now $\Col_{\fD_2}(\fD_2)$ is a union of vertical cross curves contained in $\overline{\Col_{\fD_2}(\fC_2)}$. Thus the regluing along $\fD_2$ affects only endpoints of vertical saddle connections contained in $\overline{\Col_{\fD_2}(\fC_2)}$. In the configuration shown in Figure~\ref{F:H(0^5)4cylinders}, however, the white marked point is not an endpoint of any vertical saddle connection contained in $\overline{\Col_{\fD_2}(\fC_2)}$. Therefore this marked point is unaffected by the regluing and remains a marked point on the resulting surface. This contradicts the fact that the reglued surface lies in $\cH(6)$. Thus the four-cylinder case is impossible.

\begin{figure}[htbp]
  \centering
  \includegraphics[width=0.3\linewidth]{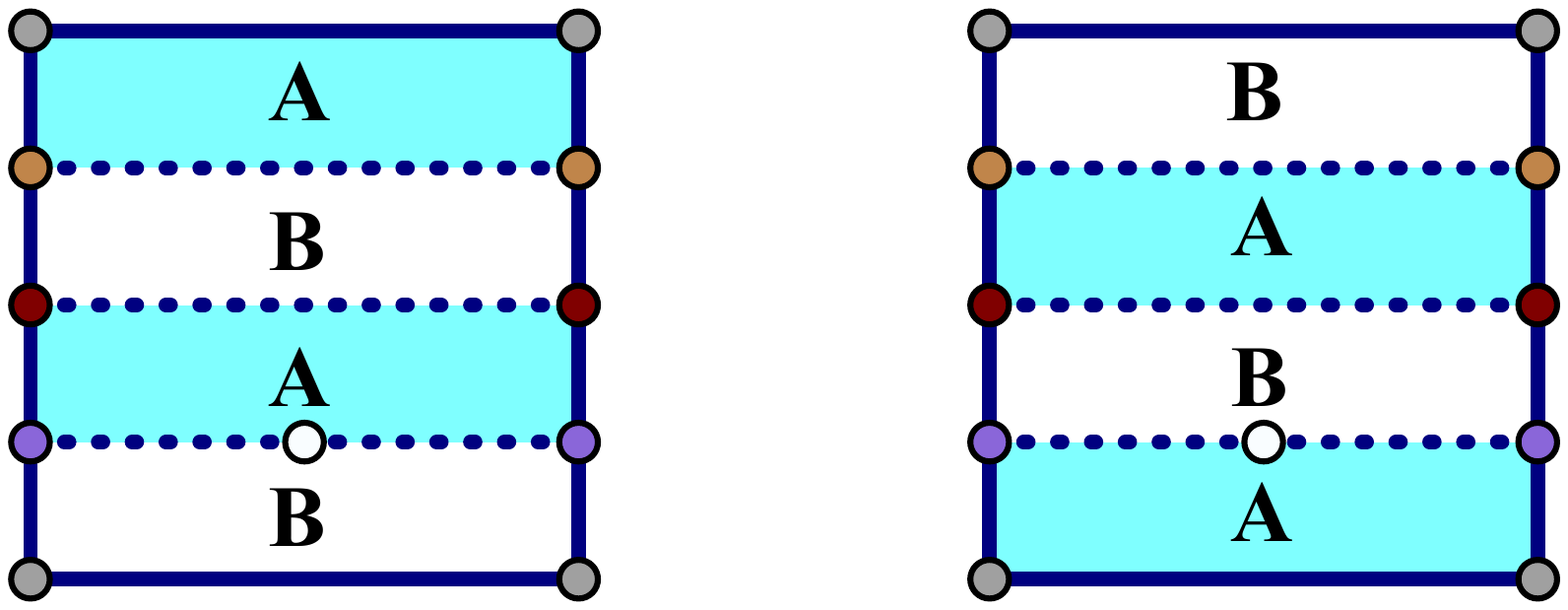}
  \caption{The four-cylinder configuration in $\cH(0^5)$.}
  \label{F:H(0^5)4cylinders}
\end{figure}

Second, suppose that $\Col_{\fD_2}(X,\omega)$ has three horizontal cylinders. Since $\fA$ consists of two cylinders, its two cylinders must be adjacent in the cyclic order. The same marked-point argument as above then gives a contradiction. Thus the three-cylinder case is impossible.

It remains to consider the case in which $\Col_{\fD_2}(X,\omega)$ has five horizontal cylinders. On a torus, these cylinders are cyclically separated by five horizontal boundary levels, each of which contains at least one marked point. Since $\Col_{\fD_2}(X,\omega)$ has exactly five marked points, each boundary level contains exactly one marked point. Consequently, every horizontal cylinder on $\Col_{\fD_2}(X,\omega)$ is simple. In particular, the two cylinders in $\fA=\Col_{\fD_2}(\fC_1)$ are simple.

Since $\fC_1$ is disjoint from $\fD_2$, every boundary saddle connection of a cylinder in $\fC_1$ persists under the collapse of $\fD_2$. Therefore the two cylinders in $\fC_1$ were already simple on $(X,\omega)$. Write $\fC_1=\{C_1,C_2\}$ and let $\gamma_i$ be the core curve of $C_i$. After applying the standard shear supplied by Lemma~\ref{lem_nohsearinc}, choose transverse cross curves $\delta_i\subseteq\overline{C_i}$. Since $(X,\omega)\in\cH(6)$ has a unique zero, each $\delta_i$ is an absolute cycle. After choosing orientations, $\langle\delta_i,\gamma_j\rangle=\delta_{ij}$. These intersection relations show that both the cross curves $\delta_1,\delta_2$ and the core curves $\gamma_1,\gamma_2$ are linearly independent. In particular, the union of the two core curves is nonseparating. Hence $\Col_{\fC_1}(X,\omega)$ is connected and the genus drops by two. Moreover, since $C_1$ and $C_2$ are simple, the vanishing-cycle space is spanned by $\delta_1$ and $\delta_2$, so the ambient stratum loses exactly two complex dimensions.

It follows that $\Col_{\fC_1}(X,\omega)$ is a connected genus two surface lying in an ambient stratum of complex dimension $8-2=6$. Therefore $\Col_{\fC_1}(X,\omega) \in\cH(2,0^2)\cup\cH(1,1,0)$. The boundary affine invariant subvariety is rank $1.5$, so Lemma~\ref{lem_1or2marked} excludes the first possibility. Hence
\[
\Col_{\fC_1}(X,\omega)\in\cH(1,1,0).
\]

The class $\Col_{\fC_1}(\fD_2)$ still consists of two cylinders. After rotating by $\pi/2$, we apply the classification of rank $1.5$ configurations in $\cH(1,1,0)$. In the second configuration in Figure~\ref{F:rank1rel1inH(1,1,0)}, the corresponding subequivalence class contains either one or three cylinders. Thus $\Col_{\fC_1}(X,\omega)$ must be the first configuration, in which the two cylinders in $\Col_{\fC_1}(\fD_2)$ are simple. Since $\fC_1$ is disjoint from $\fD_2$, the two cylinders in $\fD_2$ were already simple on $(X,\omega)$.

Applying the same cross-curve argument to the two cylinders in $\fD_2$ shows that $\Col_{\fD_2}(X,\omega)$ is connected of genus two. This contradicts the assumption that $\Col_{\fD_2}(X,\omega)$ is in $\cH(0^5)$, which has genus one. Therefore the five-cylinder case is impossible, and hence $\Col_{\fD_2}(X,\omega)\notin\cH(0^5)$.

Next suppose that $\Col_{\fD_2}(X,\omega)\in\cH(0^4)$. The class $\fA:=\Col_{\fD_2}(\fC_1)$ contains two cylinders. As above, these two cylinders cannot be adjacent. Since the other horizontal subequivalence class is nonempty, $\Col_{\fD_2}(X,\omega)$ has at least three horizontal cylinders. The three-cylinder case would force the two $\fA$-cylinders to be adjacent, while a torus with four marked points has at most four horizontal cylinders. Thus $\Col_{\fD_2}(X,\omega)$ has exactly four horizontal cylinders, with cyclic pattern $\fA\fB\fA\fB$.

The four cylinders are cyclically separated by four horizontal boundary levels, each containing at least one marked point. Since $\Col_{\fD_2}(X,\omega)$ has exactly four marked points, each boundary level contains exactly one marked point. Hence all four horizontal cylinders are simple, and in particular the two cylinders in $\fA$ are simple.

As in the five-cylinder case, the prototype containments imply that the two cylinders in $\fC_1$ were already simple on $(X,\omega)$. The preceding cross-curve argument then shows that $\Col_{\fC_1}(X,\omega) \in\cH(2,0^2)\cup\cH(1,1,0)$. Lemma~\ref{lem_1or2marked} excludes the first possibility, and hence $\Col_{\fC_1}(X,\omega)\in\cH(1,1,0)$.

The class $\Col_{\fC_1}(\fD_2)$ still consists of two cylinders. By the same classification argument as above, these cylinders are simple. The prototype containments therefore imply that the two cylinders in $\fD_2$ were already simple on $(X,\omega)$. Applying the cross-curve argument to $\fD_2$ shows that $\Col_{\fD_2}(X,\omega)$ is connected of genus two and lies in an ambient stratum of complex dimension six. This contradicts $\Col_{\fD_2}(X,\omega)\in\cH(0^4)$, since $\cH(0^4)$ has genus one and complex dimension five. Therefore $\Col_{\fD_2}(X,\omega)\notin\cH(0^4)$.

If $\Col_{\fD_2}(X,\omega)\in\cH(0^3)$, then its horizontal decomposition contains exactly three cylinders: two in $\Col_{\fD_2}(\fC_1)$ and one in $\Col_{\fD_2}(\fC_2)$. The two cylinders in $\Col_{\fD_2}(\fC_1)$ must therefore be adjacent, and the same marked-point argument as above gives a contradiction. The strata $\cH(0^j)$ with $j\le2$ are impossible because the horizontal decomposition contains at least three cylinders. Hence
\[
\Col_{\fD_2}(X,\omega)\notin\cH(0^j)
\qquad\text{for all }j\le3.
\]

We have therefore excluded all genus one boundary strata. By the list of relevant boundary strata above, the remaining possibilities are
\[
\cH(2),\quad \cH(2,0),\quad \cH(2,0^2),\quad
\cH(1,1),\quad\text{and}\quad \cH(1,1,0).
\]
The stratum $\cH(2)$ has no relative tangent directions and therefore cannot contain a rank $1.5$ affine invariant subvariety. Moreover, Lemma~\ref{lem_1or2marked} excludes $\cH(2,0)$ and $\cH(2,0^2)$. Consequently,
\[
\Col_{\fD_2}(X,\omega)
\in\cH(1,1)\cup\cH(1,1,0),
\]
which proves the lemma.
\end{proof}

Given a stratum $\mathcal{Q}(\kappa)$ of quadratic differentials, we denote by $\widetilde{\mathcal{Q}}(\kappa)$ the quadratic double consisting of holonomy double covers of surfaces in $\mathcal{Q}(\kappa)$.

\begin{lem}\label{lem:H110-collapse-branched-cover}
Let $\cM\subset\cH(6)$ be an arithmetic rank $2$ rel $0$ affine invariant subvariety, and let $(X,\omega)\in\cM$ be a prototype as in Lemma~\ref{lem:prototype}. Suppose that $\Col_{\fD_2}(X,\omega)\in \cH(1,1,0)$. Then $\cM$ is the quadratic double cover locus $\widetilde{\cQ}(5,-1)$.
\end{lem}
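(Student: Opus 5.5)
We plan to determine the boundary configuration exactly, reglue $\fD_2$ to reconstruct $(X,\omega)$, and then use the regluing data to identify $\cM$. Since $\Col_{\fD_2}(X,\omega)\in\cH(1,1,0)$ and $\cM_{\fD_2}$ is rank $1.5$ and arithmetic, Lemma~\ref{lem:main-rank1.5} says that, after rotating so that $\fD_2$ is vertical (so the relevant decomposition is horizontal), $\Col_{\fD_2}(X,\omega)$ is one of the two configurations of Figure~\ref{F:rank1rel1inH(1,1,0)}. In the horizontal direction the subequivalence classes are $\Col_{\fD_2}(\fC_1)$ and $\Col_{\fD_2}(\fC_2)$. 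Because $|\fC_1|=2$ and $\fC_1$ is disjoint from $\fD_2$, the class $\Col_{\fD_2}(\fC_1)$ still has two cylinders.

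The first step is to exclude the $\mathbf{ABAA}$ configuration. There the two-cylinder class is $\fB$ with one cylinder, or $\fA$ with three, so neither matches $|\Col_{\fD_2}(\fC_1)|=2$. This mirrors the argument already used in the proof of Lemma~\ref{lem_collinstra}. We are therefore in the $\mathbf{ABBA}$ configuration: two $\fA$-cylinders of equal height, two $\fB$-cylinders of equal height, and all four horizontal cylinders simple. Here $\Col_{\fD_2}(\fC_1)$ is one of these pairs.

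Next we locate $\Col_{\fD_2}(\fD_2)$. It is a union of vertical saddle connections in $\overline{\Col_{\fD_2}(\fC_2)}$, and regluing two vertical cylinders along these must merge the two simple zeros and the marked point into a single zero of order $6$. Since the genus goes from $2$ back to $4$, the collapse loses two dimensions of absolute homology. The marked-point argument from Lemma~\ref{lem_collinstra} then shows that each saddle connection of $\Col_{\fD_2}(\fD_2)$ must join these three points, with all three involved. Using the explicit geometry of the $\mathbf{ABBA}$ model, and the vertical cross curves from Lemma~\ref{lem_nohsearinc}, we aim to show that there is essentially one way to place the two vertical saddle connections. The equal-height constraints then pin the widths of the $\fD_2$-cylinders.

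The $\mathbf{ABBA}$ surface in $\cH(1,1,0)$ is invariant under the hyperelliptic involution, which fixes the marked point (a Weierstrass point). We will check that the forced regluing data is equivariant for this involution, so $(X,\omega)$ carries an involution with $\omega\mapsto-\omega$. Counting fixed points, the quotient should be a genus-zero quadratic differential with a zero of order $5$ and a single pole, so $\cM\subseteq\widetilde{\cQ}(5,-1)$. Finally, we compare dimensions. The locus $\widetilde{\cQ}(5,-1)$ has complex dimension $4$ and rank $2$, and $\dim\cM=\dim\cM_{\fD_2}+1\ge 4$ because $\dim\cM_{\fD_2}>2$ and rank $1.5$ forces $\dim\cM_{\fD_2}\ge 3$. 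Since $\widetilde{\cQ}(5,-1)$ is irreducible, equality follows.

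The main obstacle is the regluing step: showing that every admissible position of $\Col_{\fD_2}(\fD_2)$ yields an involution-symmetric surface. Our first approach is to use the rel deformation of $\cM_{\fD_2}$, which moves the marked point together with the zeros as in the $\mathbf{ABBA}$ model, together with the requirement that $\fD_2$ remain an equivalence class. This should rule out asymmetric positions in the same way Sublemma~\ref{sublem:marked-point-position-H110} ruled out other positions of the marked point.
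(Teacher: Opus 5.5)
There is a genuine gap, and the symmetry you plan to build is the wrong one. On a surface in $\widetilde{\cQ}(5,-1)$ the deck involution $\iota$ has exactly two fixed points: the zero, and one regular point lying over the pole. Note also that $\cQ(5,-1)$ lives in genus two, not genus zero, since a genus-zero quadratic differential has total order $-4$. A cylinder carried to itself by a map acting as $-1$ in flat coordinates contains two fixed points in its interior. Hence $\iota$ preserves no cylinder and exchanges the cylinders of each pair. Its descent $\bar\iota$ to $\Col_{\fD_2}(X,\omega)$ therefore fixes the marked point, which is the only cone point of angle $2\pi$, and exchanges the two simple cylinders of the three-cylinder diagram. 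The hyperelliptic involution $J$, by contrast, preserves each of those two cylinders, because their core curves are not homologous. If both $\bar\iota$ and $J$ fixed the marked point, $\bar\iota J$ would be a translation automorphism fixing a regular point, hence the identity. So the marked point is \emph{not} a Weierstrass point, and the boundary surface is not $J$-invariant. Even granting your equivariance claim, any lift of $J$ to $X$ would preserve at least two horizontal cylinders and so have at least four regular fixed points. The quotient would then lie in $\cQ(5,-1^5)$ or $\cQ(5,-1^9)$, never in $\cQ(5,-1)$. So the step you flag as the main obstacle cannot be completed along the proposed lines, with rel deformations or otherwise.

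What is missing is the metric input that makes the pair-swapping involution exist. The paper first shows that the two $\fC_1$ cylinders have equal circumference. They have equal height by the $\mathbf{ABBA}$ configuration, so continuing the standard shear through one full Dehn twist of the first cylinder displaces both cylinders by $c_1$. Lemma~\ref{lem_nohsearinc} at the new surface then forces $c_1\in\mathbb Z c_2$, and symmetrically $c_2\in\mathbb Z c_1$. The $\fC_2$ pair has equal circumference by passing to the limit. A second degeneration, $\Col_{\fC_1}(X,\omega)\in\cH(1,1,0)$, analysed with Lemma~\ref{lem:main-rank1.5} after rotating by $\pi/2$, shows that the $\fD_2$ cylinders are simple and equal.

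Your claim that the equal-height constraints pin the widths of the $\fD_2$ cylinders cannot work from $\Col_{\fD_2}(X,\omega)$ alone. Those widths have been collapsed to zero, and the heights of the horizontal cylinders on the boundary carry no information about them. Only once these equalities are in hand is the surface rigid up to the shears of $\fC_1$ and $\fD_2$, with an involution exchanging each pair and quotient in $\cQ(5,-1)$. Your final dimension comparison is fine at that point.
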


\begin{proof}
By Lemma~\ref{lem:main-rank1.5}, the horizontal cylinder configuration of $\Col_{\fD_2}(X,\omega)$ is one of the two configurations shown in Figure~\ref{F:rank1rel1inH(1,1,0)}. Since $\fD_2$ is disjoint from $\fC_1$, the two cylinders in $\fC_1$ survive as two cylinders in $\Col_{\fD_2}(\fC_1)$. The second configuration cannot occur, since its two subequivalence classes contain one and three cylinders, respectively. Hence $\Col_{\fD_2}(X,\omega)$ has the first configuration in Figure~\ref{F:rank1rel1inH(1,1,0)}.

In particular, the two cylinders in $\fC_1$ have the same height. Let their common height be $h$, and let their circumferences be $c_1$ and $c_2$. By Lemma~\ref{lem_nohsearinc}, after applying a standard shear to $\fC_1$, we may assume that both cylinders admit vertical cross curves.

Continue the standard shear by the amount corresponding to one full Dehn twist in the first cylinder. Since both cylinders have height $h$, this shear produces horizontal displacement $c_1$ in each cylinder. The first cylinder again admits a vertical cross curve. Applying the argument of Lemma~\ref{lem_nohsearinc} at the resulting surface, the second cylinder must also admit a vertical cross curve. Hence $c_1$ is an integer multiple of $c_2$. Interchanging the two cylinders gives that $c_2$ is an integer multiple of $c_1$. Therefore $c_1=c_2$.

The first configuration in Figure~\ref{F:rank1rel1inH(1,1,0)} also implies that the two cylinders in $\fC_2$ have equal circumference on the collapsed surface. Since their circumference ratio is locally constant on $\cM$ and converges to $1$ under the collapse of $\fD_2$, the two cylinders in $\fC_2$ already have equal circumference on $(X,\omega)$.

We next collapse $\fC_1$. The two cylinders in $\fC_1$ are disjoint and simple, and their collapse does not disconnect the surface. Their cross curves determine two independent vanishing cycles: independence follows from the fact that their core curves are not homologous, by the Gauss--Bonnet observation above. Thus exactly two complex dimensions are lost. By the preceding classification of the possible genus-two boundary strata,
\[
\Col_{\fC_1}(X,\omega)\in\cH(1,1,0).
\]

After rotating by $\pi/2$, Lemma~\ref{lem:main-rank1.5} applies again. Since $\Col_{\fC_1}(\fD_2)$ consists of two cylinders, the second configuration is again excluded. Therefore $\Col_{\fC_1}(X,\omega)$ has the first configuration in Figure~\ref{F:rank1rel1inH(1,1,0)}. The two cylinders in $\fD_2$ are consequently disjoint and simple, and the equal-height relation in the rotated picture says that they have equal circumference on $(X,\omega)$.

The configuration for $\Col_{\fD_2}(X,\omega)$ specifies how the two $\fD_2$-cylinders are inserted into $\fC_2$, while the configuration for $\Col_{\fC_1}(X,\omega)$ specifies how the two $\fC_1$-cylinders are inserted into $\fD_1$. Since these cylinders are simple and occur in equal-circumference pairs, their boundary saddle connections must be identified as shown in Figure~\ref{F:col(H(6)) all cases}. The only remaining freedom is the common twist in each pair, which can be normalized by the standard shears of $\fC_1$ and $\fD_2$. Hence, up to these allowed deformations, the resulting surface is precisely the one shown in Figure~\ref{F:col(H(6)) all cases}.

\begin{figure}[htbp]
\centering
\includegraphics[scale=0.25]{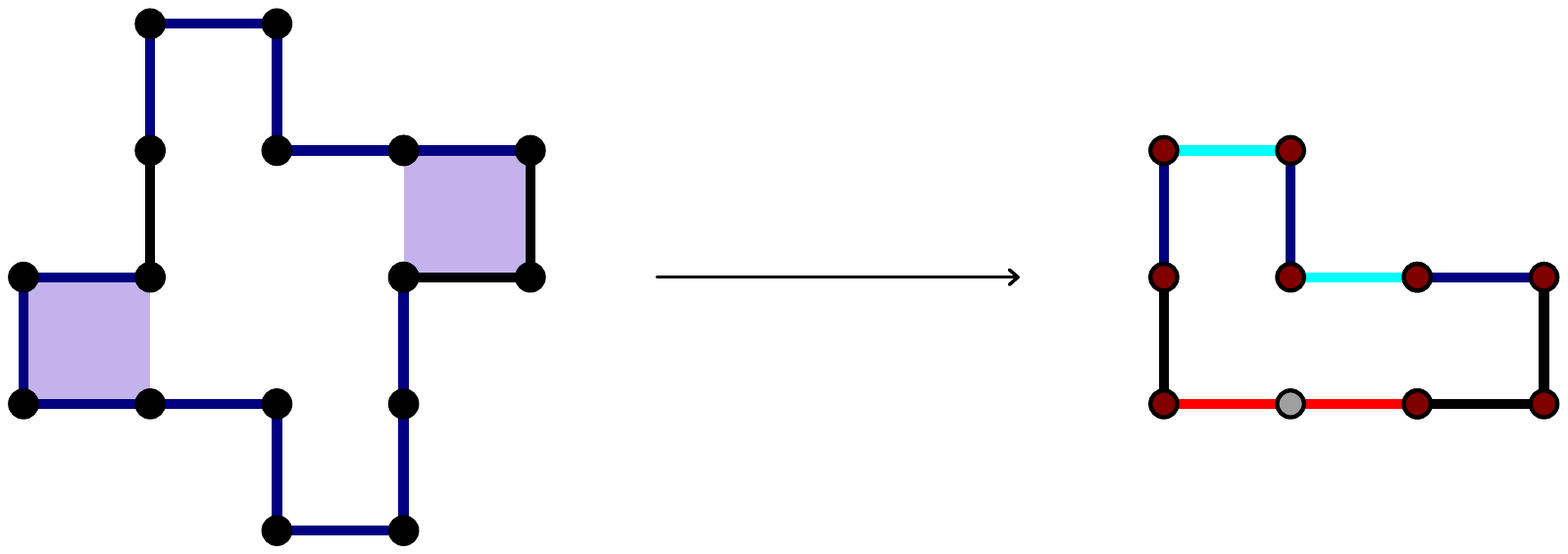}
\caption{The unique remaining configuration in the case
$\Col_{\fD_2}(X,\omega)\in \cH(1,1,0)$. The left panel shows the corresponding translation surface in $\cH(6)$, and the right panel shows the quotient half-translation surface in $\cQ(5,-1)$. Black edges are
identified with the opposite black edges by translation, while blue edges
are identified with blue edges and red edges with red edges by rotation by
$\pi$.}
\label{F:col(H(6)) all cases}
\end{figure}

This configuration admits an involution exchanging the cylinders in each equal pair, and its quotient is the half-translation surface in $\cQ(5,-1)$ shown on the right side of the figure. The equal-length and gluing relations persist locally on $\cM$, so $\cM\subseteq\widetilde{\cQ}(5,-1)$. Since both loci have complex dimension $4$, they agree on the relevant component. Therefore
\[
\cM=\widetilde{\cQ}(5,-1).
\]
\end{proof}

\begin{prop}\label{prop_caseB}
Suppose that the arithmetic affine invariant subvariety $\cM$ has rank $2$ rel $0$ in $\cH(6)$. If there exists a prototype surface $(X,\omega)\in \cM$ such that one equivalence class $\fC_1$ consists of two horizontal cylinders and the other equivalence class $\fD_2$ consists of two vertical cylinders, then $\mathcal M$ is a full locus of branched covers.
\end{prop}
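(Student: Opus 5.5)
By Lemma~\ref{lem_existclassc} we may fix a prototype $(X,\omega)\in\cM$ with $|\fC_1|=|\fD_2|=2$ such that both collapses have genus at most two and at least one of them is connected. By symmetry (rotating by $\pi/2$ swaps the roles of $\fC_1$ and $\fD_2$), we assume that $\Col_{\fD_2}(X,\omega)$ is connected. Lemma~\ref{lem_collinstra} then shows that one of the two collapses lies in a rank $1.5$ locus in $\cH(1,1)$ or $\cH(1,1,0)$. After possibly rotating again, we may take this to be $\Col_{\fD_2}(X,\omega)$. The proof therefore splits into two cases, according to whether $\Col_{\fD_2}(X,\omega)\in\cH(1,1,0)$ or $\Col_{\fD_2}(X,\omega)\in\cH(1,1)$.

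If $\Col_{\fD_2}(X,\omega)\in\cH(1,1,0)$, Lemma~\ref{lem:H110-collapse-branched-cover} gives directly that $\cM=\widetilde{\cQ}(5,-1)$. This is a quadratic double, hence a full locus of branched covers in the sense of \cite{ApisaWright2024}.

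The remaining case is $\Col_{\fD_2}(X,\omega)\in\cH(1,1)$. The dimension count gives
\[
\dim\cH(6)-\dim\cH(1,1)=8-5=3,
\]
so the vanishing-cycle space has dimension $3$ rather than $2$. I would carry out the following steps.
\begin{enumerate}
\item Apply Lemma~\ref{lem:H11-rank15-pattern} to the horizontal three-cylinder decomposition of $\Col_{\fD_2}(X,\omega)$. Since $\Col_{\fD_2}(\fC_1)$ consists of two cylinders, the pattern is $\fA\fB\fA$ with $\fA=\Col_{\fD_2}(\fC_1)$. The two $\fC_1$-cylinders then have equal height, and $\fC_2$ collapses to the single middle cylinder.
\item Run the Dehn-twist argument from the proof of Lemma~\ref{lem:H110-collapse-branched-cover}, using Lemma~\ref{lem_nohsearinc}, to get that the two $\fC_1$-cylinders have equal circumference.
\item Count saddle connections. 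The bound $N_{g,\Sigma}=4$ on $\cH(1,1)$ against $7$ on $\cH(6)$ means three horizontal saddle connections are lost. Together with the pattern from step (1), this should force the $\fD_2$-cylinders to be nested inside the single $\fC_2$-cylinder in an essentially unique combinatorial way.
\item Collapse $\fC_1$ instead. The two $\fC_1$-cylinders are simple and of equal size, so $\Col_{\fC_1}(X,\omega)$ is connected of genus two. By Lemma~\ref{lem_1or2marked}, and by the previous case applied after rotating, it lies in $\cH(1,1)$ or $\cH(1,1,0)$. In the latter case we again conclude $\cM=\widetilde{\cQ}(5,-1)$. Otherwise the symmetric conclusions hold for $\fD_2$: equal heights and circumferences, with a $\fB\fA\fB$-type pattern vertically.
\item Combine the two patterns to pin down the gluings of the prototype, up to the standard shears of $\fC_1$ and $\fD_2$. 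From the resulting picture, exhibit an involution or a covering map to a lower-genus surface. The candidates are a quadratic double, such as one over $\cQ(3,1,-1^3)$ or a similar genus-zero or genus-one quadratic stratum, or a cover of $\cH(2)$.
\item Check that the dimensions match ($\dim\cM=4$) to conclude that $\cM$ equals that full locus of covers.
\end{enumerate}

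I expect step (3) to be the main obstacle: determining exactly how the two vertical $\fD_2$-cylinders sit inside the $\fC_2$-region when three cycles vanish. Some of these cycles may be cross curves of non-simple cylinders, so the simple-cylinder arguments used earlier do not apply directly. I would first try to show that the $\fD_2$-cylinders are simple. If that fails, I would enumerate the few cylinder diagrams of $\cH(6)$ compatible with the $\fA\fB\fA$ pattern and with the seven horizontal saddle connections, and eliminate the non-cover ones using the overcollapsing obstruction together with the height-change argument of Lemma~\ref{lem:marked-point-not-free}.
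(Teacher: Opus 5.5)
\textbf{Verdict.} Your first case ($\Col_{\fD_2}(X,\omega)\in\cH(1,1,0)$) is handled exactly as in the paper. The $\cH(1,1)$ case, however, has a genuine gap. Steps (3), (5) and (6) describe a classification you have not carried out, and in step (4) you leave open the branch $\Col_{\fC_1}(X,\omega)\in\cH(1,1)$.

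\textbf{The missing idea.} This is a dimension count, and you already have its ingredients. By Lemma~\ref{lem:H11-rank15-pattern}, the two cylinders of $\Col_{\fD_2}(\fC_1)$ are the outer cylinders of the $\mathbf{ABA}$ diagram, so they are simple. Since $\fC_1$ is disjoint from $\fD_2$, they are already simple on $(X,\omega)$; you note this yourself in step (4). After the shear of Lemma~\ref{lem_nohsearinc}, both have cross curves. For simple cylinders these two cross curves span the entire vanishing-cycle space. They are independent because the core curves are not homologous. Hence collapsing $\fC_1$ loses \emph{exactly} two complex dimensions, and $\Col_{\fC_1}(X,\omega)$ lies in a $6$-dimensional genus-two stratum. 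Since $\dim_{\bC}\cH(1,1)=5$, the option $\cH(1,1)$ is excluded outright, so $\Col_{\fC_1}(X,\omega)\in\cH(1,1,0)$. Lemma~\ref{lem:H110-collapse-branched-cover}, applied with the roles of $\fC_1$ and $\fD_2$ interchanged, then gives $\cM=\widetilde{\cQ}(5,-1)$. This is the paper's argument. It makes your steps (2) and (3), including the obstacle you flagged, unnecessary. Running that lemma with the roles swapped also forces the $\fD_2$-cylinders to be simple. That is incompatible with the three vanishing cycles you computed for $\Col_{\fD_2}$, so the $\cH(1,1)$ case produces no new locus at all.

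\textbf{Steps (5)--(6) aim at loci that cannot occur.}
\begin{itemize}
\item $\cQ(3,1,-1^3)$ is not a stratum, since its orders sum to $1$, which is not of the form $4g-4$.
\item More generally, a quadratic double over a stratum with two positive odd-order zeros has two zeros upstairs, so it does not lie in $\cH(6)$. The only rank-two quadratic double in $\cH(6)$ is $\widetilde{\cQ}(5,-1)$.
\item A locus of covers of surfaces in $\cH(2)$ cannot land in $\cH(6)$ either. Riemann--Hurwitz gives $6=2d+r$. For $d=3$ the cover is unramified, producing three zeros of order $2$. For $d=2$ the largest zero upstairs has order $5$.
\end{itemize}
So even if steps (3) and (5) were completed, they could only return $\widetilde{\cQ}(5,-1)$ or a contradiction. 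The efficient route is the dimension count above.
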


\begin{proof}
By Lemma~\ref{lem_existclassc}, after rotating the surface by $\pi/2$ and relabelling the equivalence classes if necessary, we may assume that $\Col_{\fD_2}(X,\omega)$ is connected. Moreover, $g\bigl(\Col_{\fD_2}(X,\omega)\bigr)\le 2$. The argument in the proof of Lemma~\ref{lem_collinstra} then gives $\Col_{\fD_2}(X,\omega)\in \cH(1,1,0)\cup\cH(1,1)$. If $\Col_{\fD_2}(X,\omega)\in\cH(1,1,0)$, then Lemma~\ref{lem:H110-collapse-branched-cover} applies, and we are done.

Suppose instead that
\[
\Col_{\fD_2}(X,\omega)\in\cH(1,1).
\]
By Lemma~\ref{lem:H11-rank15-pattern}, the two cylinders in $\Col_{\fD_2}(\fC_1)$ are the two outer cylinders in the $\mathbf{ABA}$ configuration. In particular, they are disjoint and simple. Since $\fD_2$ is disjoint from $\fC_1$, regluing $\fD_2$ does not alter these cylinders. Hence the two cylinders in $\fC_1$ are also disjoint and simple on $(X,\omega)$, and collapsing them does not disconnect the surface.

By Lemma~\ref{lem_nohsearinc}, after applying a standard shear to $\fC_1$, we may assume that both cylinders admit vertical cross curves. These cross curves generate the vanishing cycles for the collapse of $\fC_1$. By the proof of Lemma~\ref{lem_existclassc}, they are independent. Since the two cylinders are simple, these are exactly the vanishing cycles of the collapse. Thus collapsing $\fC_1$ loses exactly two complex dimensions.

Lemma~\ref{lem_existclassc} gives $g\bigl(\Col_{\fC_1}(X,\omega)\bigr)\le 2$. Applying the genus-one and $\cH(2,0^k)$ exclusions from the proof of Lemma~\ref{lem_collinstra}, with the horizontal and vertical directions interchanged, gives $\Col_{\fC_1}(X,\omega)\in \cH(1,1,0)\cup\cH(1,1)$. Since
\[
\dim_{\bC}\cH(6)=8,\qquad
\dim_{\bC}\cH(1,1,0)=6,\qquad
\dim_{\bC}\cH(1,1)=5,
\]
a collapse losing exactly two complex dimensions cannot land in $\cH(1,1)$. Therefore $\Col_{\fC_1}(X,\omega)\in\cH(1,1,0)$.

After rotating by $\pi/2$ and interchanging the roles of $\fC_1$ and $\fD_2$, Lemma~\ref{lem:H110-collapse-branched-cover} implies that $\cM=\widetilde{\cQ}(5,-1)$. In particular, $\cM$ is a full locus of branched covers.
\end{proof}

\begin{proof}[Proof of Theorem~\ref{thm:main-rank2-h6}]
Let $\cM$ be a rank $2$ rel $0$ arithmetic affine invariant subvariety in $\cH(6)$. By Lemma~\ref{lem:prototype} we divide into two cases.

In Case~A, Proposition~\ref{prop_caseA} implies that $\cM$ is either a full stratum of Abelian differentials or a quadratic double cover locus. The full-stratum alternative is impossible, since the full stratum $\cH(6)$ has rank $4$, whereas $\cM$ has rank $2$. Hence $\cM$ is a quadratic double cover locus, and in particular is a full locus of branched covers.

In Case~B, the hypotheses of Proposition~\ref{prop_caseB} are satisfied, so $\cM$ is again a full locus of branched covers. Since the two cases exhaust all possibilities, the theorem follows.
\end{proof}

\appendix

\section{Rank \texorpdfstring{$1$}{} Rel \texorpdfstring{$1$}{}
Cylinder Rigid Loci in \texorpdfstring{$\cH(2,0^k)$}{} for \texorpdfstring{$k\le 7$}{}}
\label{app:rank1rel1}

The proof of the classification of rank two affine invariant subvarieties in $\cH(6)$ only requires the nonexistence of rank $1.5$ loci in $\cH(2,0)$ and $\cH(2,0^2)$. Here we prove the stronger, computer-assisted result that every rank $1.5$ locus in $\cH(2,0^k)$ with $k\le 7$ is a trivial torus cover; see Proposition~\ref{prop:bigh2rank15}. This range includes all cases needed for the genus four applications considered here, with $k=7$ being the extremal case arising from the principal stratum.

The computation proceeds by enumerating the possible marked-point configurations and applying an overcollapsing algorithm to each of them. The resulting attacking-speed relations involve maxima of cylinder heights. After fixing the values attained by these maxima, the relations reduce to finitely many polynomial systems, which we solve exactly using Gr\"obner bases. The remaining candidates are then eliminated by the geometric criteria described below. Every surviving configuration is a trivial torus cover.

We begin by specifying which torus covers will be regarded as trivial.

\begin{defn}
A rank $1.5$ torus cover is said to be \emph{trivial} if it is a cover of $\cH(0,0;0_{\mathrm{wp}}^{w})$, where $0_{\mathrm{wp}}^{w}$ denotes $w$ marked Weierstrass points with $0 \le w \le 4$, and if, in addition, for the covering map to the marked torus, the preimage of each block on the base is a union of blocks on the cover.
\end{defn}

We now describe the combinatorial and geometric tools used in the computation. Two cylinders are said to have the same \emph{label} if they lie in the same subequivalence class, denoted $\fA$ or $\fB$.

As before, overcollapsing a subequivalence class $\fA$ means applying a standard deformation to $\fA$ until its height becomes zero, and then continuing slightly past this collapse along the same deformation direction. The local effect on the other subequivalence class is illustrated in Figure~\ref{fig:attackfigure}. In such a configuration, overcollapsing $\fA$ may shorten saddle connections entering $\fB$ from above or from below. We define the \emph{attacking speed from above}, respectively \emph{from below}, to be the maximal rate of decrease of a saddle connection that begins at the top, respectively bottom, of $\fB$ and passes only through $\fA$.

\begin{figure}[htbp]
  \centering
  \includegraphics[width=\linewidth]{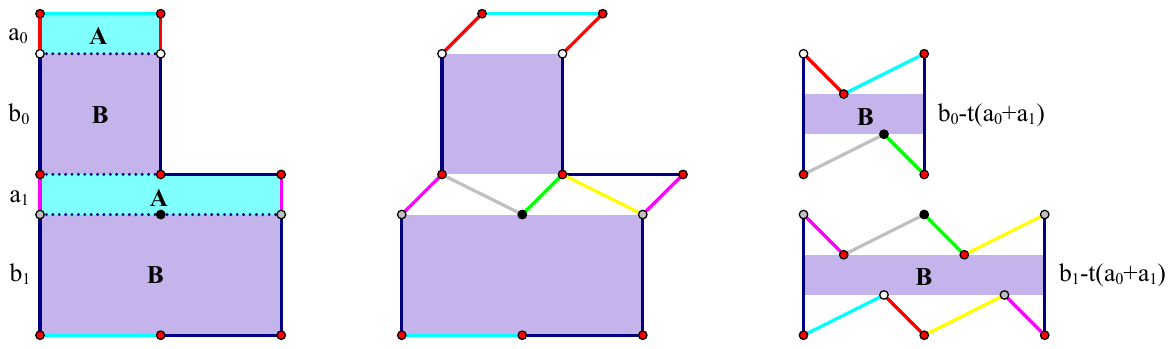}
  \caption{Overcollapsing the subequivalence class $\mathbf A$. Sides with the same color are identified, and $t$ denotes any sufficiently small positive real parameter. The first panel shows the original surface $(X,\omega)$; the middle panel shows the result of shearing only the cylinders in $\mathbf A$; the rightmost panel depicts the surface obtained after overcollapsing $\mathbf A$.}
  \label{fig:attackfigure}
\end{figure}

We use two additional mechanisms to exclude candidate configurations.

First, rel deformations may rule out configurations that are defined over $\bQ$ but are not trivial torus covers. See, for example, Figure~\ref{fig:moving-marked-points-example}. We refer to such configurations as \emph{other solutions} in the data file.

\begin{figure}[htbp]
\centering
\includegraphics[scale=0.3]{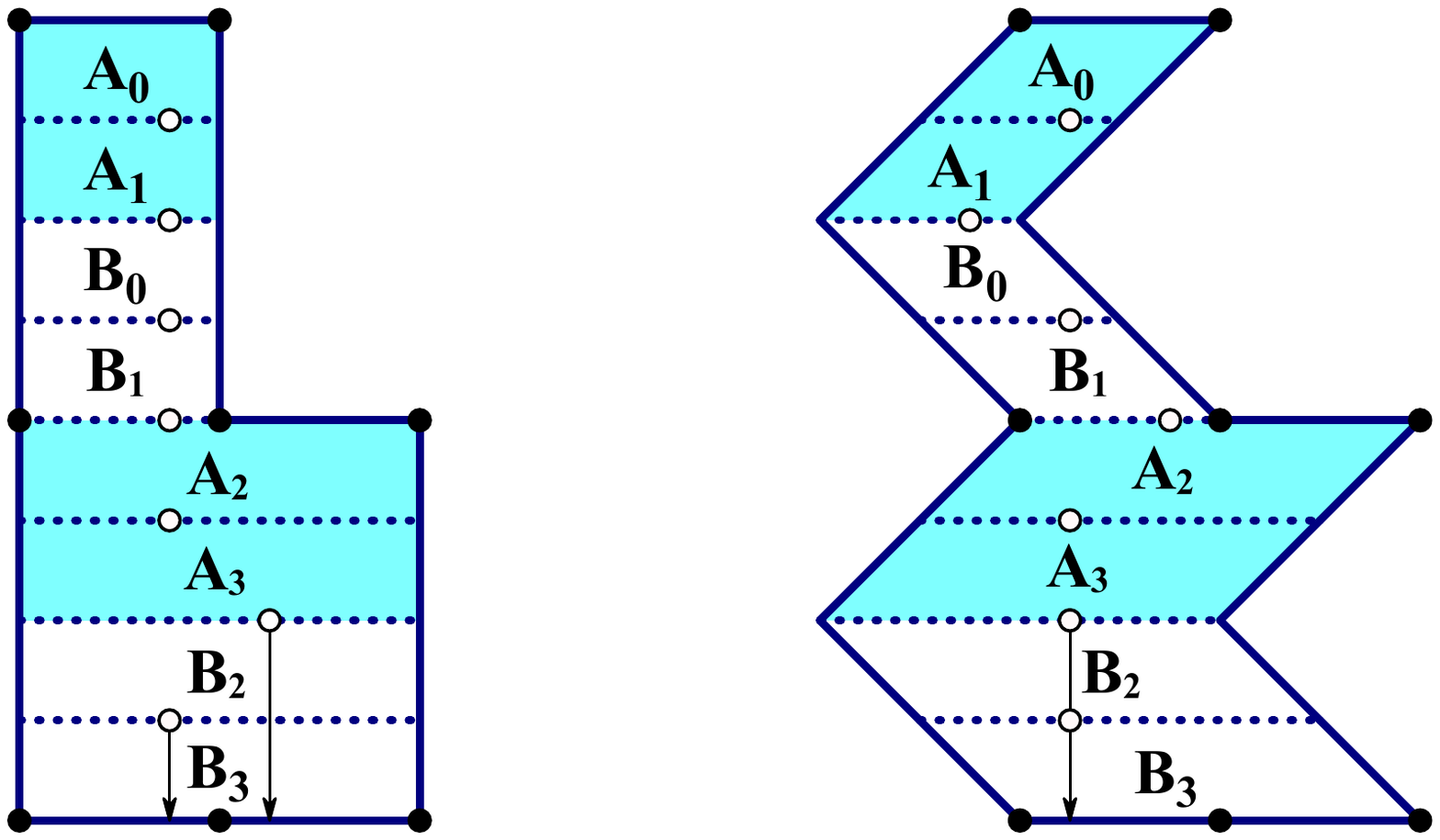}
\caption{Suppose the blue cylinders $(A_0,\dots,A_3)$ form the subequivalence class $\fA$ and the white cylinders $(B_0,\dots,B_3)$ form the subequivalence class $\fB$, with all heights equal to $1$. This configuration does not define a rank $1.5$ example, since a rel deformation allows one to move the marked point on $B_2$ so that it may collide either with $A_0$ or simultaneously with $A_0$ and $A_2$, producing distinct attacking speeds.}
\label{fig:moving-marked-points-example}
\end{figure}

Second, irrational ratios of cylinder heights are incompatible with a rank $1.5$ orbit closure in the present setting.

\begin{cor}\label{cor:no-nonarithmetic-H2}
There are no nonarithmetic rank $1.5$ affine invariant subvarieties in $\cH(2,0^k)$ for any $k\geq 1$.
\end{cor}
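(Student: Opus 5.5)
We argue by contradiction, combining the field-of-definition theory with the rank $1$ rel $1$ structure. Suppose $\cM\subset\cH(2,0^k)$ is a rank $1.5$ affine invariant subvariety whose field of definition $k(\cM)$ is not $\bQ$. By Wright's theorem \cite[Theorem~1.5]{wright2014field}, $\deg k(\cM)\le g/\rk\cM = 2$, so $k(\cM)$ is a real quadratic field. Moreover, $p(T\cM)$ is a $2$-dimensional symplectic subspace whose Galois conjugate is transverse to it. In genus two, $H^1$ therefore splits as $p(T\cM)\oplus p(T\cM)^\sigma$. It follows that the forgetful image $\pi(\cM)\subset\cH(2)$, after forgetting all marked points, is an algebraically primitive rank one locus. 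Since $\cH(2)$ has no rel, $\pi(\cM)$ is a Teichm\"uller curve, namely one of McMullen's Weierstrass curves $W_D$ with $D$ nonsquare \cite{McM06b}.

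The next step is to locate the rel direction. Since $\cM$ has rel one and $\pi(\cM)$ has none, the rel direction of $\cM$ is supported purely on the marked points: the marked points move freely, or along a one-parameter family, over a fixed Veech surface. By the classification of periodic points on genus two Veech surfaces (Apisa, and the Möller-type finiteness for Weierstrass curves), the only points that can be marked in an invariant family over a nonarithmetic Veech surface in $\cH(2)$ are Weierstrass points. Hence a positive-dimensional family of marked points, which is what rel one requires, must include one marked point that moves freely over the Veech surface. Take such a horizontally periodic Veech surface, which has one or two horizontal cylinders with circumferences whose ratio is irrational. Moving the free marked point vertically inside one cylinder splits that cylinder into two whose heights change non-proportionally, while the other cylinder's height stays fixed. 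This is exactly the obstruction used in Lemma~\ref{lem:marked-point-not-free}: the height changes cannot be distributed among at most two subequivalence classes with locally constant height ratios. That contradicts condition (3) of the rank $1.5$ definition.

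The step I expect to be the main obstacle is the middle one: ruling out rel directions in which several marked points move together in a coupled, non-free way over a $W_D$ surface. If the periodic-point classification is not directly applicable, I would instead run the overcollapsing argument of Lemma~\ref{lem:main-rank1.5}. The attacking-speed equations express height ratios within a subequivalence class as rational functions with rational coefficients of heights. One then checks that the marked-point positions force these ratios into $\bQ$. For nonarithmetic $\cM$, however, the cylinder moduli of the underlying Veech surface lie in $k(\cM)\setminus\bQ$, and Wright's cylinder-rigidity statement says the height ratios lie in $k(\cM)$ and are Galois-compatible. Comparing the two descriptions yields the contradiction, which is the same mechanism as the golden-ratio exclusion in the $\cH(1,1,0)$ analysis.
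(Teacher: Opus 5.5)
Your first paragraph is sound. Wright's bound and algebraic primitivity show that forgetting the marked points sends $\cM$ onto a Teichm\"uller curve $W_D\subset\cH(2)$ with $D$ nonsquare. The rel line of $\cM$ then moves the marked points $p_1,\dots,p_k$ over a fixed Veech surface, with velocities in a fixed ratio $a_1:\dots:a_k$.

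\textbf{The gap is the next step.} M\"oller's theorem is about periodic points, i.e.\ finite orbits. It tells you that the marked points with $a_i=0$ are Weierstrass points, and rel one tells you that some $a_i\neq 0$. It does not tell you that only one $a_i$ is nonzero. The obstruction of Lemma~\ref{lem:marked-point-not-free} uses exactly that the moving point moves \emph{alone}. With several coupled moving points, the height test in a single two-cylinder direction can be passed. For example, put one moving point in each horizontal cylinder $C_1,C_2$, at heights $y_1,y_2$ with $y_1/y_2=h_1/h_2$, and give them vertical velocities with $a_1/a_2=h_1/h_2$. The four subcylinders have heights $y_1,\ h_1-y_1,\ y_2,\ h_2-y_2$ and height changes $a_1,-a_1,a_2,-a_2$. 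These are a common positive rescaling of the two lower pieces and a common negative rescaling of the two upper pieces. So two subequivalence classes $\fA$ (lower pieces) and $\fB$ (upper pieces) are consistent with this deformation. Moreover, the absolute parts of their standard deformations are proportional to $h_1\gamma_1^*+h_2\gamma_2^*$, as they must be for rank one. Nothing is contradicted. Ruling out such families needs either information from all cylinder directions or a classification of one-parameter invariant point markings over $W_D$, and the proposal supplies neither.

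You flagged this step yourself, but the fallback does not close it. The assertion that the marked-point positions force the height ratios into $\bQ$ is the entire content, and it cannot come from the form of the overcollapsing equations alone. Those same equations give the golden ratio in $\cH(1,1,0)$, where a genuine nonarithmetic rank $1.5$ locus exists. Any such argument must therefore use that the underlying surface lies in $\cH(2)$. It would also have to work uniformly in $k$, whereas an enumeration of configurations, as in the appendix, handles one $k$ at a time.

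The paper sidesteps all of this with a single citation. By \cite[Proposition~4.1]{apisa2024algebraicallyprimitiveinvariantsubvarieties}, the only nonarithmetic rank $1.5$ locus in genus two is the golden eigenform locus with one golden point marked. Forgetting its marked point lands in $\cH(1,1)$, while forgetting the marked points of a locus in $\cH(2,0^k)$ lands in $\cH(2)$. Your reduction to $W_D\subset\cH(2)$ is precisely the observation that makes this comparison work. What your proposal lacks is the classification input itself.
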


\begin{proof}
By \cite[Proposition~4.1]{apisa2024algebraicallyprimitiveinvariantsubvarieties}, every nonarithmetic rank $1$ rel $1$ cylinder rigid affine invariant subvariety in genus two is the golden eigenform locus with exactly one golden point marked. After forgetting the marked point, this locus is contained in $\cH(1,1)$. On the other hand, forgetting the marked points of a locus in $\cH(2,0^k)$ gives a locus in $\cH(2)$. Hence no such nonarithmetic locus can occur in $\cH(2,0^k)$.
\end{proof}

For completeness, we record the following geometric restriction on irrational height configurations. This observation is not needed for the classification below. Write $\cF(X,\omega)$ for the surface obtained from $(X,\omega)$ by forgetting all marked points.

\begin{lem}\label{lem_noirrational}
Suppose $(X,\omega)$ lies in a rank $1.5$ affine invariant subvariety and that $\cF(X,\omega)\in\cH(2)$ or $\cH(1,1)$. Assume there exist two $\fB$-cylinders on $(X,\omega)$ whose heights have irrational ratio, and that $\cF(\Col_{\fA}(X,\omega))\in\cH(2)$ or $\cH(1,1)$. Then any two $\fB$-cylinders contained in the same cylinder of $\cF(X,\omega)$ have equal height, and at most two such $\fB$-cylinders may occur in a single cylinder.
\end{lem}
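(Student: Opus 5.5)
The plan is to exploit the fact that, in a rank $1.5$ locus, the rel direction changes heights within a subequivalence class proportionally. We then compare the rel deformation on $(X,\omega)$ with the one induced on $\Col_{\fA}(X,\omega)$.

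First, fix the setup. Forget marked points, so that $\cF(X,\omega)$ lies in $\cH(2)$ or $\cH(1,1)$. Each horizontal cylinder of $\cF(X,\omega)$ is then a union of horizontal cylinders of $(X,\omega)$, cut along horizontal levels through marked points. By Corollary~\ref{cor:no-nonarithmetic-H2} and \cite[Proposition~4.1]{apisa2024algebraicallyprimitiveinvariantsubvarieties}, an irrational height ratio forces the golden eigenform locus in $\cH(1,1)$ with a single golden marked point. I will nevertheless argue directly with heights, since that is what the statement asserts.

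Second, consider a cylinder $C$ of $\cF(X,\omega)$ and the $\fB$-cylinders it contains. Their heights sum with the $\fA$-pieces to $h_C$. The pieces inside $C$ are separated only by marked points. Since $\cF(\Col_{\fA}(X,\omega))$ still lies in $\cH(2)$ or $\cH(1,1)$, collapsing $\fA$ does not merge zeros. Hence the $\fA$-pieces are bounded by marked-point levels, and after the collapse the $\fB$-pieces inside $C$ become adjacent pieces of a single cylinder of the collapsed surface, separated by marked points. I will use the rel direction $\sigma_{\fA}-c\sigma_{\fB}$, which is a multiple of the rel vector, exactly as in the proof of Lemma~\ref{lem:H11-rank15-pattern}. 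The constraint is that the only height-changing motions are those of marked points. Moving a marked point between two $\fB$-pieces transfers height from one to the other, which is compatible with proportional scaling only if the two heights are equal and no third piece is involved. With three or more $\fB$-pieces, some piece would have an unchanged height while another changes, contradicting local constancy of the ratios, just as in the argument of Lemma~\ref{lem:marked-point-not-free}. This yields the bound of at most two $\fB$-cylinders per cylinder of $\cF(X,\omega)$.

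Third, for equality of heights, I will overcollapse $\fA$ and compute the attacking speeds on the two $\fB$-pieces inside a common $C$. Both are attacked from the same $\fA$-neighbours with the same speed, so their heights must change at rates proportional to the heights themselves. Since both pieces lie in the same cylinder, the rates are equal, and hence the heights are equal. The irrational-ratio hypothesis is used to guarantee that the two $\fB$-cylinders with irrational ratio lie in different cylinders of $\cF(X,\omega)$. Otherwise we would contradict this conclusion, and the remaining case analysis therefore only involves pieces within one cylinder.

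I expect the main obstacle to be the attacking-speed comparison in the third step. Saddle connections may pass through several $\fA$-cylinders, so the speeds are maxima of height expressions, and one must check that the two pieces genuinely see the same attacking speed. I would first try to handle this by choosing a rel deformation that aligns the marked points vertically, as in the proof of Lemma~\ref{lem_1or2marked}, which should make the speeds visibly equal.
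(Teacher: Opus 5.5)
Your proposal has a genuine gap. The irrationality hypothesis is the essential input, and it is never actually used. In Step~3 you invoke it only by appealing to the conclusion you are trying to prove, which is circular. Otherwise, Steps~2--3 rely only on the rel direction and on attacking speeds, which are available for every rank $1.5$ locus.

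The statement is false for arithmetic loci, so no such argument can work. For instance, take tori with the origin, $\pm p$ and the three nonzero $2$-torsion points marked. In every periodic direction the fixed levels sit at $0,\tfrac12$ and the moving levels at $\pm s$, so this locus is rank $1.5$. Pull it back to the four-square $L$-shaped surface in $\cH(2)$ with squares $(0,0),(1,0),(0,1),(0,2)$, marking all preimages of marked points. Every hypothesis except irrationality holds, yet the height-two horizontal cylinder of $\cF(X,\omega)$ contains four $\fB$-cylinders.

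Your local steps fail accordingly.
\begin{itemize}
\item Under $\sigma_{\fA}-c\sigma_{\fB}$ every $\fB$-height changes by the same proportional factor. So three or more $\fB$-pieces in one cylinder can all shrink together, as in the example.
\item For the same reason, a ``transfer'' of height between two $\fB$-pieces is never possible, whatever their heights; it cannot yield ``equal heights''.
\item Two $\fB$-pieces in one cylinder of $\cF(X,\omega)$ need not have the same $\fA$-neighbours, so nothing forces their attacking speeds to agree.
\item The hypothesis on $\cF(\Col_{\fA}(X,\omega))$ allows $\cH(1,1)\to\cH(2)$, so it does not say that zeros are not merged.
\end{itemize}

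The missing idea is the one the paper uses.
\begin{itemize}
\item By Mirzakhani--Wright, $\Col_{\fA}(X,\omega)$ lies on a Teichm\"uller curve in the corresponding genus two stratum.
\item Collapsing $\fA$ leaves the $\fB$-heights unchanged, so the irrational ratio persists. This rules out the square-tiled (torus cover) case.
\item M\"oller's theorem on periodic points of primitive genus two Veech surfaces then forces every marked point of $\Col_{\fA}(X,\omega)$ to be a Weierstrass point.
\item Such a point lies on the central axis of any horizontal cylinder containing it in its interior, and so cuts that cylinder into two halves of equal height.
\item The $\fB$-pieces of a cylinder of $\cF(X,\omega)$ land, with unchanged heights, in a single cylinder of the collapsed surface. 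Hence there are at most two of them, and they have equal height.
\end{itemize}
Your Step~1 remark, that irrationality forces the golden eigenform locus with a single marked point, would have given the bound of two pieces per cylinder essentially for free, but you discarded it.
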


\begin{proof}
By Mirzakhani--Wright \cite[Corollary~2.14]{mirzakhani2017boundary}, the collapsed surface $\Col_{\fA}(X,\omega)$ lies on a Teichm\"uller curve in the corresponding genus two stratum. The irrationality of the height ratio of the two $\fB$-cylinders rules out the torus cover case for this Teichm\"uller curve. Hence, by M\"oller's results on periodic points on Veech surfaces \cite[Theorems~5.1--5.2]{moller2006periodic}, any marked points on $\Col_{\fA}(X,\omega)$ are fixed by the hyperelliptic involution. On a genus two surface, the hyperelliptic fixed points lie on the central symmetry axis of each horizontal cylinder and therefore cut any horizontal cylinder containing them into two subcylinders of equal height. Since collapsing the $\fA$-cylinders does not alter the heights of the $\fB$-cylinders, it follows that any two $\fB$-cylinders inside the same cylinder of $\cF(X,\omega)$ must be the two halves produced by such a fixed point. In particular, their heights coincide and there are at most two of them.
\end{proof}

For the classification in this appendix, irrational candidates will instead be excluded using \cite[Proposition~4.1]{apisa2024algebraicallyprimitiveinvariantsubvarieties}, which classifies all nonarithmetic rank $1.5$ loci in genus two.

We also record two illustrative examples.

\begin{lem}
The example in Figure~\ref{fig:exofblocks} illustrates a trivial torus cover, while the example in Figure~\ref{fig:exnontrivial} gives a nontrivial torus cover.
\end{lem}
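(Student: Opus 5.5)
The statement has two halves. For the example in Figure~\ref{fig:exofblocks} we must exhibit a covering map to a marked torus that satisfies the definition of a trivial rank $1.5$ torus cover. For the example in Figure~\ref{fig:exnontrivial} we must show that no such covering exists. In both cases the surface is an explicit square-tiled surface in $\cH(2,0^k)$, so the plan is to read off the combinatorics of its horizontal cylinder decomposition and blocks directly.

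\emph{Trivial example.} The left panel of Figure~\ref{fig:exofblocks} already presents the surface as a $3\!:\!1$ cover of a torus, with cylinder pattern $\mathbf{AAB}$. The steps are as follows.
\begin{enumerate}
\item Identify the base torus $T=\mathbb C/\Lambda$ and check which points on it are images of marked points. There should be at most four such points, and they should be $2$-torsion points, i.e.\ Weierstrass points of $T$. This places the base in $\cH(0,0;0_{\mathrm{wp}}^{w})$ with $w\le 4$.
\item Check that the subequivalence classes on the cover are pulled back from a decomposition of the base into an $\fA$-region and a $\fB$-region. The horizontal cylinders of the base torus, cut by the marked Weierstrass points, give at most two cylinder levels, and these levels should be labelled $\fA$ and $\fB$ compatibly with the cover.
\item Verify that the preimage of each base block is exactly a union of the blue, respectively white, blocks drawn in the right panel. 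This amounts to tracking the three sheets over each base cylinder.
\end{enumerate}
Separately, the example should be confirmed to be rank $1.5$. Apply the overcollapsing computation: the attacking speeds on $\fB$ from above and below must be consistent, and the heights within each class must be proportional.

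\emph{Nontrivial example.} Here the argument is by obstruction. Suppose the surface of Figure~\ref{fig:exnontrivial} were a trivial torus cover $f$ onto some marked torus with Weierstrass marked points. Then every marked point on the cover maps to a $2$-torsion point. In addition, blocks would pull back from blocks on the base. This forces the horizontal blocks of each class to have heights that are determined by the heights of at most two base levels.

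I would compare these constraints with the actual block heights and marked-point positions in the figure. Concretely, I would look for one of the following:
\begin{itemize}
\item a marked point whose relative position in its cylinder is not a half-period; or
\item a block whose height is not a sum of the base level heights compatible with the other blocks of the same class.
\end{itemize}
Either contradicts triviality. Finally, I would check directly that the surface is still a rank $1.5$ torus cover: it is square-tiled, and the overcollapsing relations are satisfied.

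The main obstacle is making the nontriviality argument robust against \emph{every} possible covering map, not just the obvious one. To handle this, I would reduce to the minimal lattice generated by the relative periods of all cone points. Any torus cover that respects the marked points must factor through this lattice, so it suffices to test the Weierstrass and block conditions on that single minimal base.
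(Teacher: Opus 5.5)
\textbf{The nontriviality argument has a genuine gap, and it starts from a misreading of the definition.} The base $\cH(0,0;0_{\mathrm{wp}}^{w})$ is a torus with two free marked points \emph{in addition to} $w\le 4$ marked Weierstrass points. So triviality does not force every marked point of the cover to lie over a $2$-torsion point: points over the two free base points are unconstrained. The same misreading affects your Step 1 for Figure~\ref{fig:exofblocks}, although that half is otherwise a direct check, as in the paper.

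More decisively, the caption of Figure~\ref{fig:exnontrivial} says this surface lies in $\cH(1,1,0^2)$, not $\cH(2,0^k)$, and that it does cover a surface in $\cH(0,0;0_{\mathrm{wp}}^{2})$. So the Weierstrass clause of the definition is satisfied and cannot be where triviality fails. It must fail in the second clause, and that is the paper's argument: for no torus model is the preimage of the base subequivalence class $\fA$ a union of subequivalence classes of the same type on the cover.

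Your height test aims at the wrong thing. Every cover cylinder is automatically a stack of lifted base cylinders, so its height is always a sum of base heights. What must be shown is that the lifted $\fA$-cylinders of the base never assemble into $\fA$-blocks of the cover, for any base.

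\textbf{The reduction to a single minimal base does not hold.} A map $X\to\bC/\Lambda$ with $f^*dz=\omega$ exists exactly when $\Lambda$ contains the absolute period lattice $\Lambda_0$, and it then factors through $\bC/\Lambda_0$. It does not factor through the quotient by all relative periods, since that quotient collapses all cone points to one point. Nor does triviality pass from a coarser base $\bC/\Lambda$ to $\bC/\Lambda_0$: cone points that coincide in $\bC/\Lambda$ can separate in $\bC/\Lambda_0$, which pushes the configuration out of $\cH(0,0;0_{\mathrm{wp}}^{w})$. So ``no torus model'' requires ruling out every lattice $\Lambda\supseteq\Lambda_0$, not just one.

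\textbf{Your check that the second example is rank $1.5$ is not a proof.} A single square-tiled surface has a Teichm\"uller curve as its orbit closure, so you must first say which rank $1$ rel $1$ locus is meant. Satisfying the overcollapsing relations only verifies necessary conditions derived from cylinder rigidity. The paper instead identifies the locus as a boundary component of the gothic locus; rank $1.5$ then follows from the boundary theory for degenerations of rank $2$ loci.
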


\begin{proof}
The trivial example is immediate from the definition. The second example is rank $1.5$ because it lies in the boundary of the gothic locus \cite{mcmullen2017cubic, eskin2020billiards}. It is nontrivial, since there is no torus model for which the preimage of the subequivalence class $\fA$ is again a union of subequivalence classes of the same type.
\end{proof}

\begin{figure}[htbp]
  \centering
  \includegraphics[width=0.5\linewidth]{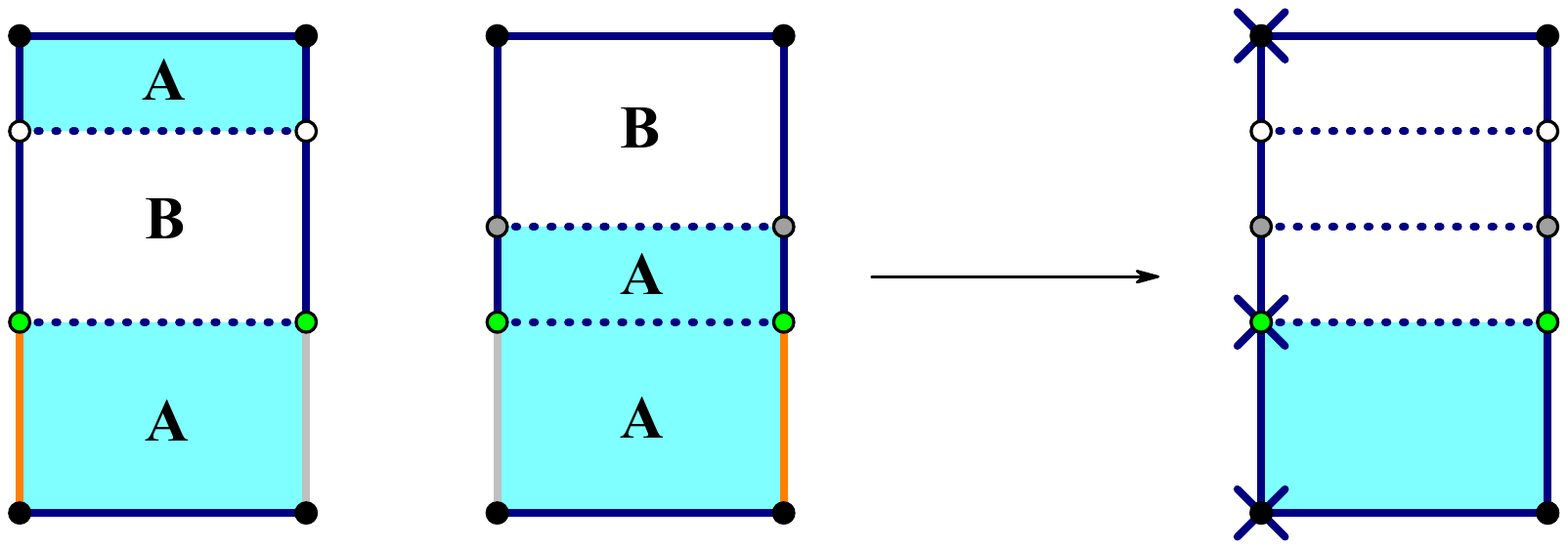}
  \caption{A translation surface in $\cH(1,1,0^2)$ which covers a surface in $\cH(0,0;0_{\mathrm{wp}}^{2})$. Opposite edges of the polygonal presentation are identified, and the grey, respectively orange, saddle connection is glued to the other grey, respectively orange, segment. The black and green points denote distinct cone points of order~$1$, while the white and grey points represent marked points.}
  \label{fig:exnontrivial}
\end{figure}

The next lemma gives a linear relation among cylinder heights that is used in the computation.

\begin{lemma}\label{lem:H2-sum-heights-equations}
Let $(X,\omega)\in\cH(2,0^k)$ be horizontally periodic and let the horizontal cylinders be partitioned into two subequivalence classes $\fA$ and $\fB$. Denote by $\fA_{\mathrm{short}}$ and $\fB_{\mathrm{short}}$ the collections of short cylinders in $\fA$ and $\fB$, and by $\fA_{\mathrm{long}}$ and $\fB_{\mathrm{long}}$ the corresponding collections of long cylinders. Then
\[
\left(\sum_{A\in\fA_{\mathrm{short}}}h(A)\right)
\left(\sum_{B\in\fB_{\mathrm{long}}}h(B)\right)
=
\left(\sum_{A\in\fA_{\mathrm{long}}}h(A)\right)
\left(\sum_{B\in\fB_{\mathrm{short}}}h(B)\right),
\]
where $h(\cdot)$ denotes the height of a cylinder.
\end{lemma}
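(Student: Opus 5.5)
The plan is to read the identity as the statement that the absolute parts of the two standard deformations $\sigma_{\fA}$ and $\sigma_{\fB}$ are proportional. By the Mirzakhani--Wright result already used in the proof of Lemma~\ref{lem:H11-rank15-pattern} (\cite[Theorem~1.5]{mirzakhani2017boundary}), the absolute projections of $\sigma_{\fA}$ and $\sigma_{\fB}$ are positively proportional in a rank $1.5$ locus. Hence there is $c>0$ such that $\sigma_{\fA}-c\,\sigma_{\fB}$ is purely relative, i.e.\ it vanishes on every absolute cycle. The proof then reduces to evaluating this purely relative class on two well-chosen absolute cycles and eliminating $c$.

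First I would identify the absolute homology spanned by the core curves in these rectilinear models. Forgetting the marked points gives a surface in $\cH(2)$ with the horizontal decomposition into a short cylinder of circumference $c_1$ and a long cylinder of circumference $c_2$ (the L-shaped two-cylinder diagram). Each short cylinder of $(X,\omega)$ is a subcylinder of the short cylinder of $\cF(X,\omega)$, and similarly for long cylinders. The cohomology classes $\gamma_C^*$ are evaluated on cycles by counting signed crossings. So I would choose two absolute cycles on the unmarked surface, pulled back to cycles avoiding the marked points:
\begin{itemize}
\item $\delta_1$, a vertical closed curve crossing only the short region once, hence crossing each short cylinder exactly once and no long cylinder;
\item $\delta_2$, a closed curve crossing only the long region once, crossing each long cylinder exactly once and no short cylinder.
\end{itemize}
In the L-shaped diagram both exist: the short cylinder's top is glued to the long cylinder's bottom and vice versa, so a vertical path through the overhanging part of the long cylinder closes up within the long region. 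One may need to shear to make it vertical, but only homology matters.

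Pairing gives
\[
(\sigma_{\fA}-c\,\sigma_{\fB})(\delta_1)=\sum_{A\in\fA_{\mathrm{short}}}h(A)-c\sum_{B\in\fB_{\mathrm{short}}}h(B)=0,
\]
and the analogous identity with long cylinders from $\delta_2$. Since $c>0$ and the sums are nonnegative, cross-multiplying eliminates $c$ and gives the stated equation. If one of the $\fB$ sums vanishes, the corresponding $\fA$ sum vanishes too, and the identity is trivial.

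The main obstacle is justifying that $\delta_1$ and $\delta_2$ meet each subcylinder exactly once with the same sign. The worry is whether, after inserting marked points, a curve through the short region could pass through a long subcylinder. I would argue this at the level of the unmarked $\cF(X,\omega)$: subcylinders created by marked points are horizontal strips of the two big cylinders, so any curve crossing a big cylinder once transversally crosses each of its strips once. I also need the rank $1.5$ proportionality in the horizontally periodic setting of the hypothesis, which is where \cite[Theorem~1.5]{mirzakhani2017boundary} is invoked.
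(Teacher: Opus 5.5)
Your argument is correct and is essentially the paper's. The paper's nontrivial rel deformation, which rescales the $\fA$- and $\fB$-heights by $a_1,a_2$, is your purely relative class $\sigma_{\fA}-c\,\sigma_{\fB}$. Its two equations, saying that the total heights of the short and long regions are preserved, are your evaluations on $\delta_1,\delta_2$, and the parameter is then eliminated in the same way.

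One small fix: your gluing justification only produces $\delta_2$. A vertical geodesic through the interior of the short cylinder of the L-shape runs into the long cylinder. So take $\delta_1$ to be a vertical saddle connection crossing the short cylinder, which is closed because $\cH(2)$ has a single zero, or equivalently the difference of the two vertical core curves of the L-shape.
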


\begin{proof}
The short and long blocks satisfy the same linear relation coming from the rel deformation. Since the rel deformation preserves absolute periods, its effect on the short and long regions is governed by the same coefficients. More precisely, if $a_1$ and $a_2$ are these coefficients, then
\[
a_1\sum_{A\in\fA_{\mathrm{short}}} h(A)+a_2\sum_{B\in\fB_{\mathrm{short}}} h(B)
=
\sum_{A\in\fA_{\mathrm{short}}} h(A)+\sum_{B\in\fB_{\mathrm{short}}} h(B),
\]
and the same relation holds for the long cylinders. Comparing the two equations yields the stated identity.
\end{proof}

\begin{cor}\label{cor:normalization-H2}
In the setting above, both subequivalence classes occur in the long and short regions. Hence we may choose reference cylinders $A_0\in\fA$ and $B_0\in\fB$. By independently rescaling the two subequivalence classes using their standard cylinder deformations, we normalize $h(A_0)=h(B_0)=1$.
\end{cor}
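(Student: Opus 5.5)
The statement to prove is Corollary~\ref{cor:normalization-H2}: in the setting of Lemma~\ref{lem:H2-sum-heights-equations}, both subequivalence classes $\fA$ and $\fB$ contain short cylinders and long cylinders. The normalization is then routine. I will argue by contradiction, using the identity of Lemma~\ref{lem:H2-sum-heights-equations} together with positivity of cylinder heights.

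Suppose first that $\fA$ contains no short cylinder, so $\fA_{\mathrm{short}}=\emptyset$. The left-hand side of the identity then vanishes, so
\[
\left(\sum_{A\in\fA_{\mathrm{long}}}h(A)\right)\left(\sum_{B\in\fB_{\mathrm{short}}}h(B)\right)=0 .
\]
Heights are strictly positive, so one of the two index sets is empty.
\begin{itemize}
\item If $\fA_{\mathrm{long}}=\emptyset$, then $\fA$ is empty. This contradicts the setting, in which both subequivalence classes occur.
\item If $\fB_{\mathrm{short}}=\emptyset$, then there are no short cylinders at all. This contradicts the fact that the rectilinear models have cylinders of both circumferences $c_1<c_2$.
\end{itemize}
The cases $\fA_{\mathrm{long}}=\emptyset$, $\fB_{\mathrm{short}}=\emptyset$ and $\fB_{\mathrm{long}}=\emptyset$ are handled the same way, by exchanging the roles of short and long or of $\fA$ and $\fB$. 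The identity is symmetric under both swaps. Hence all four collections $\fA_{\mathrm{short}},\fA_{\mathrm{long}},\fB_{\mathrm{short}},\fB_{\mathrm{long}}$ are nonempty.

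Next I pick any $A_0\in\fA$ and $B_0\in\fB$; for definiteness, the topmost one in each class. By cylinder rigidity, the standard deformation $\sigma_{\fA}$ rescales all $\fA$-heights by a common factor, and it lies in $T_{(X,\omega)}\cM$ by the Cylinder Deformation Theorem. The same holds for $\sigma_{\fB}$. Applying the real flows $e^{s}$ of these deformations to each class separately keeps the surface in $\cM$. It preserves all height ratios within each class, and it preserves the cylinder diagram and the short/long labels, since circumferences do not change. So I may take $h(A_0)=h(B_0)=1$.

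The main subtle point is not the algebra. It is checking that Lemma~\ref{lem:H2-sum-heights-equations} applies with nonzero, nonproportional rel coefficients $a_1\ne a_2$. This is what makes the identity a genuine constraint and not a tautology. I would justify it by the nontriviality of the rel deformation, the consequence of rank $1.5$ recorded at the start of Section~\ref{subsec:H20-H200}.
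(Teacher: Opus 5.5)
Your proof is correct and follows the paper's argument. Both combine the identity of Lemma~\ref{lem:H2-sum-heights-equations} with positivity of heights and nonemptiness of both classes and both regions; the paper's phrasing, ``a region lying entirely in one class forces the other region to do so too, leaving a single class,'' is exactly your case split. (Minor attribution fix: for a subequivalence class, $\sigma_{\fA}\in T_{(X,\omega)}\cM$ comes from the definition of cylinder rigidity, not from Wright's Cylinder Deformation Theorem, which concerns full $\cM$-equivalence classes.)
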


\begin{proof}
If one of the two regions were entirely made of cylinders from a single subequivalence class, then the identity in Lemma~\ref{lem:H2-sum-heights-equations} would force the other region to have the same property. This would leave only one subequivalence class, contrary to the setup. Thus both classes occur in the relevant regions.
\end{proof}

We now turn to the main result of this appendix.

\begin{prop}\label{prop:bigh2rank15}
Every rank $1.5$ affine invariant subvariety in $\cH(2,0^k)$ with $k \le 7$ is a trivial torus cover.
\end{prop}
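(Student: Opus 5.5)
The plan is a finite, computer-assisted enumeration, following the scheme announced at the start of this appendix. By Corollary~\ref{cor:no-nonarithmetic-H2}, any rank $1.5$ locus $\cM\subset\cH(2,0^k)$ is arithmetic. Hence $\cM$ is a locus of torus covers and contains a dense set of horizontally periodic surfaces. On these surfaces we may use the rectilinear models of Section~2, with short cylinders of circumference $c_1$ and long cylinders of circumference $c_2$. The underlying unmarked surface in $\cH(2)$ carries either the one-cylinder or the two-cylinder diagram. Since rel must be nontrivial and must rescale heights in each subequivalence class, the argument of Lemma~\ref{lem_1or2marked} lets us work with the two-cylinder (L-shaped) diagram.

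\textbf{Step 1: enumerate configurations.} For each $k\le 7$, enumerate the placements of the $k$ marked points on the horizontal boundary levels and in the interiors of the short and long cylinders, modulo the hyperelliptic involution and horizontal translations. Each placement produces a list of horizontal cylinders, ordered top to bottom within the short and long regions. Assign to them all labelings by $\fA,\fB$. Then prune the labelings combinatorially.
\begin{enumerate}
\item Corollary~\ref{cor:normalization-H2} requires both labels to appear in both regions.
\item The marked-point argument forbids a cylinder whose closure contains no marked point while its class moves under rel, as in Lemma~\ref{lem_1or2marked}.
\item The one-versus-many overcollapsing obstruction from Lemma~\ref{lem:main-rank1.5} applies.
\end{enumerate}

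\textbf{Step 2: derive the polynomial system.} For each surviving labeled configuration, normalize $h(A_0)=h(B_0)=1$. Overcollapse $\fA$ and then $\fB$. Cylinder rigidity forces the attacking speeds from above and from below on every cylinder of the opposite class to be proportional to its height, with a common constant per class. This yields equations involving $\max$ of partial height sums. To remove the maxima, branch over which term attains each maximum, recording the branch as inequalities. Add the quadratic identity of Lemma~\ref{lem:H2-sum-heights-equations}. Solve each branch exactly by Gr\"obner bases, keeping only positive real solutions consistent with the recorded inequalities.

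\textbf{Step 3: filter the solutions and identify them.}
\begin{enumerate}
\item Discard irrational solutions using \cite[Proposition~4.1]{apisa2024algebraicallyprimitiveinvariantsubvarieties}; consistently with Corollary~\ref{cor:no-nonarithmetic-H2}, none should survive.
\item For the rational solutions, test rel-movability. Move each marked point along the rel direction, as in Figure~\ref{fig:moving-marked-points-example}, and check whether any reachable nearby surface changes which cylinders it attacks. If it does, the attacking speeds become inconsistent and the configuration is discarded as an ``other solution''.
\item For each remaining configuration, exhibit the covering map explicitly. Quotient by the hyperelliptic involution together with the translation structure to obtain a torus with at most four marked Weierstrass points. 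Then verify that the blocks on the cover are exactly the preimages of blocks on the base.
\end{enumerate}
The last item is a direct check on the finite list of survivors.

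\textbf{Main obstacle.} I expect Step 2 to be the hardest, and specifically its completeness and correctness. The number of configurations grows quickly with $k$; at $k=7$ there are many labelings, and branching on the $\max$ terms multiplies them further. One must argue that overcollapsing in both directions, together with the rel relation, captures all constraints, so that no case can be missed. I would first reduce the case count aggressively by symmetry, using the hyperelliptic involution, vertical reflection and swapping $\fA$ with $\fB$, and by the cheap combinatorial filters of Step 1 before invoking any Gr\"obner computations. I would also cross-check the solver on the known examples: the trivial example of Figure~\ref{fig:exofblocks} should survive, and the golden example should be discarded.
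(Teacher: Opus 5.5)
This is essentially the paper's own proof: enumerate marked-point configurations on the L-shaped diagram, turn overcollapsing into polynomial systems with case splits on the maxima and solve them exactly, discard irrational solutions via Corollary~\ref{cor:no-nonarithmetic-H2} and rational non-examples via the rel-movability test, and check that the survivors are trivial torus covers. A few details need correcting. First, the restriction to the L-shaped diagram comes from choosing a two-cylinder periodic direction, which exists by McMullen's prototypes \cite[Section~3]{mcmullen2005}; it does not come from the argument of Lemma~\ref{lem_1or2marked}, which does not exclude one-cylinder directions. Second, the enumeration must record whether a marked point in a long cylinder lies on the left or right side of the model, since this can change whether it attacks the topmost long cylinder, so you cannot simply quotient out horizontal positions. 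Third, the covering torus comes from integrating $\omega$ modulo a lattice containing the absolute periods, not from the hyperelliptic quotient, which is a sphere.
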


\begin{proof}
For each $1\le k\le 7$, we enumerate all possible marked point configurations in $\cH(2,0^k)$ compatible with the block decomposition above. For each candidate, the overcollapsing procedure gives a finite collection of polynomial systems in the cylinder heights, after normalizing $h(A_0)=h(B_0)=1$. The maxima appearing in the attacking speeds are handled by splitting into finitely many cases, so each case is an exact polynomial computation.

The resulting solutions fall into three types. Some systems have no positive solutions. Some systems have solutions with irrational height ratios, and these are excluded by Corollary~\ref{cor:no-nonarithmetic-H2}. The remaining rational solutions are checked against the rel deformation test described above: if moving a marked point produces two different attacking speeds, then the candidate cannot define a rank $1.5$ orbit closure. After these exclusions, every surviving configuration is a trivial torus cover in the sense of the definition.

The enumeration and the exact polynomial checks are available at
\[
\url{https://github.com/pr4-kp/rank-1-5-solver}.
\]
The organized output is contained in the file \texttt{h2/H2\_Solutions\_web\_new.xlsx}.
\end{proof}

\begin{ex}[The computation in $\cH(2,0^3)$]\label{ex:rank15-H2000}
We illustrate the computation in the stratum $\cH(2,0^3)$. An exhaustive enumeration subject to Corollary \ref{cor:normalization-H2} gives the configurations shown in Figure~\ref{fig:proof-H2000}, labeled $J_1,\ldots,J_8$ and $K_1,\ldots,K_6$. Here and below, when no confusion is possible, we write $A_i$ and $B_i$ for the heights of the corresponding cylinders.

\begin{figure}[htbp]
\centering
\includegraphics[scale=0.5]{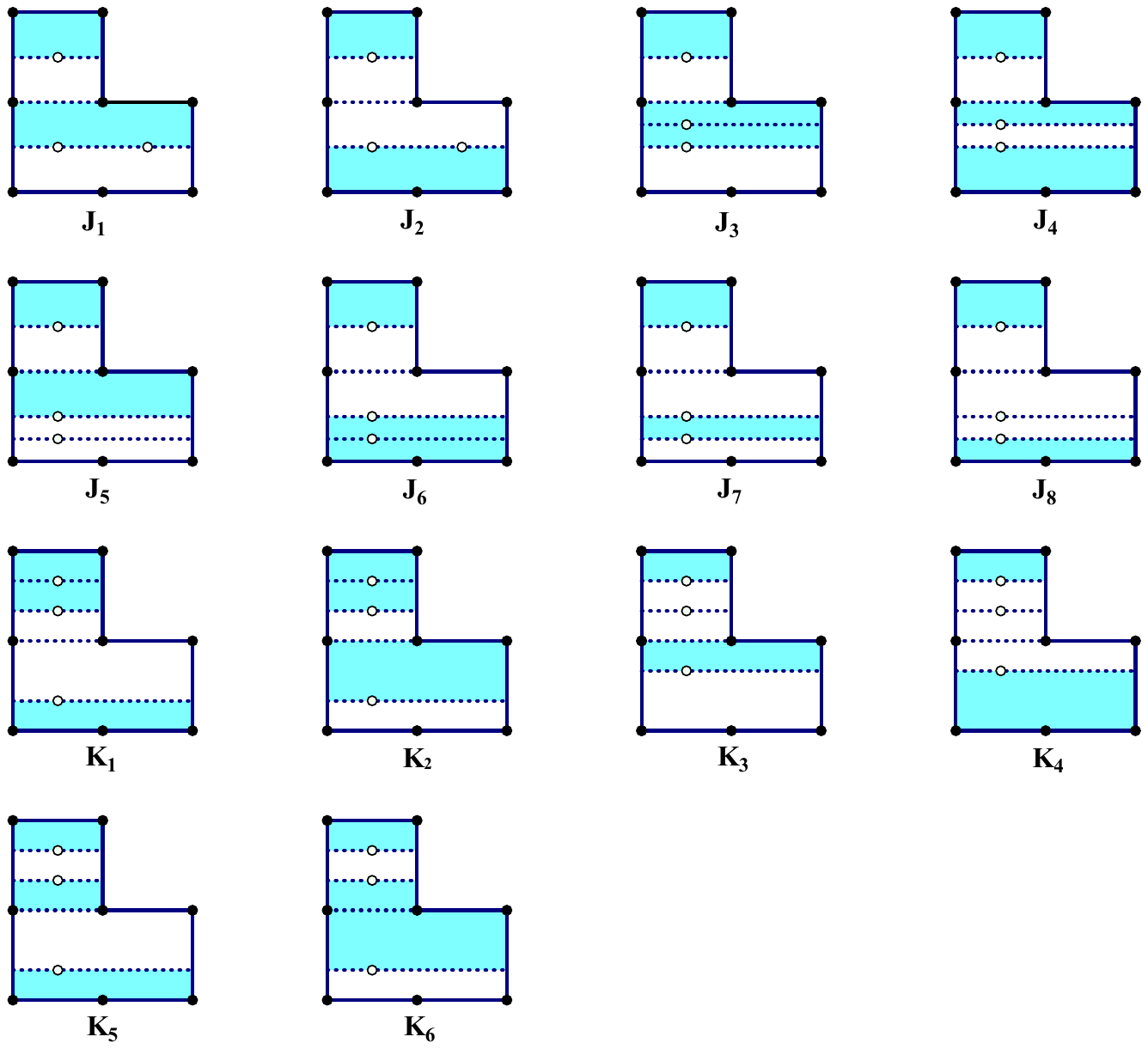}
\caption{Suppose the surface has three marked points in a rank $1.5$ orbit closure. The configurations shown above exhaust all possibilities. Blue cylinders correspond to the subequivalence class $\fA$, while white cylinders correspond to the subequivalence class $\fB$.}
\label{fig:proof-H2000}
\end{figure}

We emphasize that for a long cylinder, the marked point may lie on either the left or the right side. This may affect whether it can attack the topmost long cylinder. Although this possibility is not illustrated in the figure, it is included in the case analysis.

We apply the overcollapsing algorithm to the configurations in Figure~\ref{fig:proof-H2000}. The cases $J_6$ and $J_8$ are excluded by the rel deformation test described above: moving the relevant marked point produces two different possible attacking speeds, so the configuration cannot define a rank $1.5$ orbit closure. The case $J_2$ gives a positive solution with an irrational ratio of cylinder heights, and is therefore excluded by Corollary~\ref{cor:no-nonarithmetic-H2}. The cases
\[
J_1,\quad J_3,\quad J_5,\quad K_2,\quad K_3
\]
give trivial torus covers. The remaining cases
\[
J_4,\quad J_7,\quad K_1,\quad K_4,\quad K_5,\quad K_6
\]
have no positive solution to the corresponding overcollapsing equations.

We give the calculation for the case $K_2$; the other surviving cases are similar. In this case the cylinders occur in the order $A_0, A_1, B_0, A_2, B_1$. After normalizing $A_0=B_0=1$, the overcollapsing equations are
\[
\begin{aligned}
A_0B_0-A_1B_1&=0,\\
A_0(B_0+B_1)-A_2B_1&=0,\\
B_0(A_0+A_1+A_2)-B_1(A_0+A_1+A_2)&=0,\\
-A_2B_0+B_1(A_0+A_1)&=0.
\end{aligned}
\]
Since all heights are positive, the third equation gives $B_1=B_0=1$. The first equation then gives $A_1=1$, and the second equation gives $A_2=2$. The fourth equation is then automatically satisfied. Thus the unique positive solution is
\[
A_0=1,\qquad A_1=1,\qquad A_2=2,\qquad B_0=1,\qquad B_1=1.
\]
These heights give the trivial torus cover corresponding to $K_2$. Hence the case $K_2$ contributes no nontrivial rank $1.5$ orbit closure. Since the other surviving cases $J_1,J_3,J_5,$ and $K_3$ similarly give trivial torus covers, the computation in $\cH(2,0^3)$ produces no nontrivial examples.
\end{ex}

\bibliographystyle{amsalpha}
\bibliography{name}
\end{document}